\begin{document}
\setlength{\baselineskip}{16pt}

\parindent 0.5cm
\evensidemargin 0cm \oddsidemargin 0cm \topmargin 0cm \textheight 22.5cm \textwidth 16cm \footskip 2cm \headsep
0cm

\newtheorem{theorem}{Theorem}[section]
\newtheorem{lemma}{Lemma}[section]
\newtheorem{proposition}{Proposition}[section]
\newtheorem{definition}{Definition}[section]
\newtheorem{example}{Example}[section]
\newtheorem{corollary}{Corollary}[section]

\newtheorem{remark}{Remark}[section]

\numberwithin{equation}{section}

\def\p{\partial}
\def\I{\textit}
\def\R{\mathbb R}
\def\C{\mathbb C}
\def\u{\underline}
\def\l{\lambda}
\def\a{\alpha}
\def\O{\Omega}
\def\e{\epsilon}
\def\ls{\lambda^*}
\def\D{\displaystyle}
\def\wyx{ \frac{w(y,t)}{w(x,t)}}
\def\imp{\Rightarrow}
\def\tE{\tilde E}
\def\tX{\tilde X}
\def\tH{\tilde H}
\def\tu{\tilde u}
\def\d{\mathcal D}
\def\aa{\mathcal A}
\def\DH{\mathcal D(\tH)}
\def\bE{\bar E}
\def\bH{\bar H}
\def\M{\mathcal M}
\renewcommand{\labelenumi}{(\arabic{enumi})}

\def\disp{\displaystyle}
\def\undertex#1{$\underline{\hbox{#1}}$}
\def\card{\mathop{\hbox{card}}}
\def\sgn{\mathop{\hbox{sgn}}}
\def\exp{\mathop{\hbox{exp}}}
\def\OFP{(\Omega,{\cal F},\PP)}
\newcommand\JM{Mierczy\'nski}
\newcommand\RR{\ensuremath{\mathbb{R}}}
\newcommand\CC{\ensuremath{\mathbb{C}}}
\newcommand\QQ{\ensuremath{\mathbb{Q}}}
\newcommand\ZZ{\ensuremath{\mathbb{Z}}}
\newcommand\NN{\ensuremath{\mathbb{N}}}
\newcommand\PP{\ensuremath{\mathbb{P}}}
\newcommand\abs[1]{\ensuremath{\lvert#1\rvert}}

\newcommand\normf[1]{\ensuremath{\lVert#1\rVert_{f}}}
\newcommand\normfRb[1]{\ensuremath{\lVert#1\rVert_{f,R_b}}}
\newcommand\normfRbone[1]{\ensuremath{\lVert#1\rVert_{f, R_{b_1}}}}
\newcommand\normfRbtwo[1]{\ensuremath{\lVert#1\rVert_{f,R_{b_2}}}}
\newcommand\normtwo[1]{\ensuremath{\lVert#1\rVert_{2}}}
\newcommand\norminfty[1]{\ensuremath{\lVert#1\rVert_{\infty}}}
\newcommand{\ds}{\displaystyle}

\title{Dynamics in chemotaxis models of parabolic-elliptic type on bounded
domain with time and space dependent logistic sources}
\author{
Tahir  Bachar Issa and Wenxian Shen  \\
Department of Mathematics and Statistics\\
Auburn University\\
Auburn University, AL 36849\\
U.S.A. }

\date{}
\maketitle

\begin{abstract}
The current paper considers the dynamics of the following chemotaxis system of parabolic-elliptic type with local as well as nonlocal time and space dependent logistic source
$$
\begin{cases}
u_t=\Delta u-\chi\nabla (u\cdot \nabla v)+u\left(a_0(t,x)-a_1(t,x)u-a_2(t,x)\int_{\Omega}u\right),\quad x\in \Omega\cr
0=\Delta v+ u-v,\quad x\in \Omega \quad \cr
\frac{\p u}{\p n}=\frac{\p v}{\p n}=0,\quad x\in\p\Omega,
\end{cases}
$$
 where $\Omega \subset \mathbb{R}^n(n\ge1)$ is a bounded domain with  smooth boundary $\p\Omega$ and  $a_i(t,x)$ ($i=0,1,2$)  are locally H\"older continuous in $t\in\RR$ uniformly with respect to $x\in\bar\Omega$ and continuous in $x\in\bar\Omega$.  We first prove the local existence and uniqueness of  classical solutions $(u(x,t;t_0,u_0),v(x,t;t_0,u_0))$ with $u(x,t_0;t_0,u_0)=u_0(x)$ for various initial functions $u_0(x)$.  Next, under some conditions on the coefficients $a_1(t,x)$, $a_2(t,x)$,  and $\chi$ and the dimension $n$, we prove  the
global existence and boundedness of classical solutions $(u(x,t;t_0,u_0),v(x,t;t_0,u_0))$ with  given nonnegative initial function $u(x,t_0;t_0,u_0)=u_0(x)$.
  Then, under the same conditions for the global existence, we show  that the system has an  entire  positive classical solution $(u^*(x,t),v^*(x,t))$.
  Moreover,  if $a_i(t,x)$ $(i=0,1,2)$ are periodic in $t$ with period $T$ or are independent of $t$, then  the system has a time periodic positive solution $(u^*(x,t),v^*(x,t))$ with periodic $T$  or a steady state positive solution $(u^*(x),v^*(x))$. If $a_i(t,x)$ $(i=0,1,2)$ are independent of $x$ (i.e. are spatially homogeneous), then the system
  has a spatially homogeneous entire positive solution $(u^*(t),v^*(t))$. Finally, under some further assumptions, we prove that the system has a unique
  entire positive solution $(u^*(x,t),v^*(x,t))$ which is globally stable in the sense that for any given $t_0\in\RR$ and $u_0\in C^0(\bar\Omega)$ with $u_0(x)\ge 0$ and $u_0(x)\not\equiv 0$,
  $$
  \lim_{t\to\infty} \Big(\|u(\cdot,t;t_0,u_0)-u^*(\cdot,t)\|_{C^0(\bar\Omega)}+\|v(\cdot,t;t_0,u_0)-v^*(\cdot,t)\|_{C^0(\bar\Omega)}\Big)=0.
  $$
  Moreover, if $a_i(t,x)$ $(i=0,1,2)$ are periodic or almost periodic in $t$, then the unique entire positive solution
  $(u^*(x,t),v^*(x,t))$ is also periodic or almost periodic in $t$.
\end{abstract}

  \medskip

  \noindent {\bf Key words.} Parabolic-elliptic chemotaxis system,   logistic source,  classical solution, local existence, global existence, entire positive solution, periodic positive solution, almost periodic positive solution,  asymptotic behavior.

  \medskip

  \noindent  {\bf 2010 Mathematics Subject Classification.} 35B08, 35B10, 35B15, 35B35, 35B40, 35K57, 35Q92, 92C17.

\section{Introduction and the statements of the main results}
\label{S:intro}

Chemotaxis refers to the movement of  living organisms in response to certain chemicals in their environments.  This type of movement  exists in many
 biological phenomena such as bacteria aggregation, immune system response or angiogenesis in the embryo formation and in tumour development. At the beginning of the 1970s,  Keller and Segel  (\cite{KS1970}, \cite{KS71})  introduced some mathematical models to describe the aggregation of certain types of bacteria. Since then,   a variety of mathematical models to describe chemotaxis have been proposed. Systems with chemotactic terms have been used to model not only the mentioned biological processes at the microscopic scale but also population dynamics at the macroscopic scale in the context of life sciences, 'gravitational collapse' in astrophysics, material sciences, etc.  A large amount of research has been carried out toward various central problems in chemotaxis models, including
 global existence of classical/weak solutions with given initial  data;  finite-time blow-up;  pattern formation;  existence, uniqueness, and stability of certain special solutions;  etc. (see,  for instance, \cite{JIDTN1995, FTJ02, GZ1998, H03, HW2001,  HW05, ST2005, ST2001, STW13,  T.Wang2016}, etc.).

Consider   chemotaxis systems for the time evolution of the densities of one species and one chemoattractant.
According to the nature of the equation  satisfied by the chemoattractant,  such  systems  can be classified as  parabolic-parabolic systems,  parabolic-elliptic systems,  or parabolic-ODE systems.  Many interesting dynamical scenarios are observed in such chemotaxis systems.
 For example, it is observed that chemotactic cross-diffusion may exert a strongly destabilizing action  in the sense that finite-time blow-up might occur in the following system,
\begin{equation}
\label{u-v-eq0-1}
\begin{cases}
u_t=\Delta u-\chi\nabla (u\cdot \nabla v),\quad x\in \Omega \cr
\tau v_t=\Delta v+ u-v,\quad x\in \Omega \cr
\frac{\p u}{\p n}=\frac{\p v}{\p n}=0, \quad x\in\p \Omega,
\end{cases}
\end{equation}
 (see \cite{HMVJ1996, HVJ1997b, JaW92, NT1995, NT2001} for the parabolic-elliptic case (i.e. $\tau=0$) and \cite{HVJ1997a}, \cite{win_JMAA_veryweak}, \cite{Win11} for the fully parabolic case (i.e. $\tau>0$)).
 It is observed that logistic-type sources may suppress such blow-up phenomena to some
extent. More precisely, consider
\begin{equation}
\label{u-v-eq0-2}
\begin{cases}
u_t=\Delta u-\chi\nabla (u\cdot \nabla v)+u(a-bu),\quad x\in \Omega \cr
\tau v_t=\Delta v+ u-v,\quad x\in \Omega \cr
\frac{\p u}{\p n}=\frac{\p v}{\p n}=0, \quad x\in\p \Omega.
\end{cases}
\end{equation}
It is shown that
 solutions of \eqref{u-v-eq0-2} with positive initial functions  exist globally in time
provided that $b$ is sufficiently large relative to $\chi$
 (see e.g. \cite{lankeit_exceed, NTa15, TW07, TW12, Vig, Win10}).
 Also, quite rich dynamical features, including spatial pattern formation and spatio-temporal chaos,
 may be observed in chemotaxis models, including \eqref{u-v-eq0-2}, at least numerically (see \cite{kuto_PHYSD, PaHi}).
  The reader is referred to \cite{ChTe, KhBa,  KuTe, lankeit_eventual, TaWi15, TJiCMAJ, T04, win_JNLS}, etc. for the analytic studies of chemotaxis models
   with logistic sources. However, many central problems for chemotaxis models with logistic sources, including the possible
   occurrence of finite time blow up when the logistic damping is not large relative to the chemotaxis sensitivity,
   are not well understood yet. In particular, there is little study on chemotaxis systems with time and space dependent logistic sources.

 In reality, the environments of many living organisms are spatially and temporally heterogeneous. It is of both biological and mathematical interests to study chemotaxis models with certain time and space dependence.
In the present paper,  we consider the following  chemotaxis system of parabolic-elliptic type  with both local and nonlocal heterogeneous logistic source,
\begin{equation}
\label{u-v-eq1}
\begin{cases}
u_t=\Delta u-\chi\nabla (u\cdot \nabla v)+u(a_0(t,x)-a_1(t,x)u-a_2(t,x)\int_{\Omega}u),\quad x\in \Omega \cr
0=\Delta v+ u-v,\quad x\in \Omega \cr
\frac{\p u}{\p n}=\frac{\p v}{\p n}=0, \quad x\in\p \Omega,
\end{cases}
\end{equation}
where $\Omega$ is a bounded subset of $\mathbb{R}^n$ with smooth
boundary,   $u(x,t)$ and $v(x,t)$ represent  the population densities   of  living
organisms and   some chemoattractant substance, respectively, $\chi \in \mathbb{R}$ is
the chemotactic sensitivity,   $a_0,a_1$ are nonnegative bounded functions and $a_2$ is a bounded real valued function.

System $\eqref{u-v-eq1}$ with constant
coefficients for both one and two species, was introduced recently in
\cite{NT13} by Negreanu and Tello. As mentioned in \cite{NT13},
the logistic growth describes the competition of the individuals of
the species for the resources of the environment and the cooperation
to survive. The coefficient $a_0$ induces an exponential growth for
low density populations and the term $a_1u$ describes a local competition of the species. At the time that the population grows, the
competitive effect of the local term $a_1u$  becomes more influential. The non-local term $a_2\int_{\Omega}u$ describes the influence of the total mass of the species in the growth of the population. If $a_2>0,$ we have a competitive term which limits such growth and when $a_2<0$ the individuals cooperate globally to survive. In the last case, the individuals compete locally but cooperate globally and the effects of $a_1u$ and $a_2\int_{\Omega}u$ balance the system.  Note that $(u,v)\equiv
(0,0)$ is always a solution of \eqref{u-v-eq1}, which will be called
the {\it  trivial solution} of \eqref{u-v-eq1}. Due to the
biological reason, we are only interested in nonnegative solutions,
in particular, nonnegative and nontrivial solutions, of
\eqref{u-v-eq1}.

In the case that the chemotaxis and nonlocal competition are absent
(i.e. $\chi=0$ and  $a_2\equiv 0$) in \eqref{u-v-eq1}, the population density $u(x,t)$ of the living organisms
satisfies
the following scalar reaction diffusion equation,
\begin{equation}
\label{fisher-eq}
\begin{cases}
u_t=\Delta u+u(a_0(t,x)-a_1(t,x)u),\quad x\in \Omega \cr
\frac{\p u}{\p n}=0, \quad x\in\p \Omega.
\end{cases}
\end{equation}
Equation \eqref{fisher-eq} is called Fisher or KPP type equation in literature because of the pioneering works by Fisher (\cite{Fis}) and Kolmogorov, Petrowsky, Piscunov
(\cite{KPP}) in the special case $a_0(t,x)=a_1(t,x)=1$, and has been extensively studied (see \cite{CaCo}, \cite{HeWe}, \cite{NMN00}, \cite{ShYi}, \cite{QXZ}, etc.).
The dynamics of \eqref{fisher-eq} (and then the dynamics of \eqref{u-v-eq1})
is quite well understood.
For example, if $a_0(t,x)\equiv a_0(t)$
and $a_1(t,x)\equiv a_1(t)$,  it is proved in \cite{NMN00} that
\eqref{u-v-eq1} has a unique bounded entire solution,  that is positive,  does not approach the zero-solution in the past and in the future and attracts  all positive solutions. If $a_0(t,x)$ and $a_1(t,x)$ are positive and almost periodic in $t$, it is proved in \cite{ShYi} that \eqref{fisher-eq} has a
unique globally stable  time almost periodic positive solution.

In the case of constant coefficients with $a_0>0$ and $a_1-|\Omega|(a_2)_->0$, it is clear that $(u,v)\equiv (\frac{a_0}{a_1+a_2|\Omega|},\frac{a_0}{a_1+a_2|\Omega|})$ is
the unique nontrivial spatially and temporally homogeneous steady state solution of \eqref{u-v-eq1}, where $|\Omega|$ is the Lebesgue measure of $\Omega$. It is proved in \cite{NT13}   that
  the condition $a_1>2\chi+\abs{a_2}$ ensures the global stability of the homogeneous steady state   (see \cite{TW07} when $a_2=0$) and  that, if furthermore $a_2=0$,  the assumption $a_1>\frac{n-2}{n}\chi$  ensures the global existence of a unique bounded classical solution  $(u(x,t;t_0,u_0),v(x,t;t_0,u_0))$ with given nonnegative  initial function
  $u_0\in C^{0,\alpha}(\bar\Omega)$ (i.e. $u(x,t_0;t_0,u_0)=u_0(x)\ge 0$) (see \cite{TW07}).
It should be pointed  that,  when $n\ge 3$ and $a_1\le \frac{n-2}{n}\chi$ ($\chi>0$ and $a_2=0$),  it  remains open whether for any
 given nonnegative initial function $u_0\in C^{0,\alpha}(\bar\Omega)$,  \eqref{u-v-eq1}
 possesses a global classical solution $(u(x,t;t_0,u_0),v(x,t;t_0,u_0))$ with
 $u(x,t_0;t_0,u_0)=u_0(x)$, or whether finite-time blow-up occurs for some initial data.  We mention the works \cite{lankeit_exceed},  \cite{win_JMAA_veryweak},  \cite{win_JNLS}  along this direction.
 It is shown in \cite{lankeit_exceed}, \cite{win_JNLS} that in presence of suitably weak logistic dampening
(that is, small $a_1$)  certain transient growth phenomena do occur for some
initial data. It is shown in \cite{win_JMAA_veryweak} that replacing $a_1 u$ by $a_1 u^\kappa$ with  suitable
$\kappa<1$ (for instance, $\kappa=1/2$) and replacing $u-v$ by $u-\frac{1}{|\Omega|}\int_\Omega u(x)dx$, then  finite-time blow-up is possible.

 However, as far as $\chi\not =0$ and $a_0(t,x),a_i(t,x),a_2(t,x)$ are not constants, there is little study of \eqref{u-v-eq1}.
 The objective of the present paper is to investigate the asymptotic dynamics of \eqref{u-v-eq1}.
 To this end, we first study the local and global existence of classical solutions of \eqref{u-v-eq1} with given nonnegative initial functions,
 next study the existence  of entire positive solutions, and then investigate the uniqueness and stability of entire positive solutions and
  the asymptotic behavior of  positive solutions
of \eqref{u-v-eq1}. Throughout this paper, we assume that $a_i$ $(i=0,1,2)$ satisfy the following standing assumption.

\medskip
\noindent {\bf (H1)} {\it $a_0(t,x)$, $a_1(t,x)$ and $a_2(t,x)$ are
\text{ H\"older}  continuous  in $t\in\RR $ with exponent $\nu>0$ uniformly with respect to $x\in\bar\Omega$, continuous in $x\in\bar\Omega$ uniformly
with respect to $t\in\RR$, and there are nonnegative constants $\alpha_i$, $A_i$ $(i=0,1,2)$ with $\alpha_1+\alpha_2>0$  such that
\begin{equation}
\begin{cases}
\label{conditions-on-a-i-eq}
0< \alpha_0\leq a_0(t,x) \leq A_0\cr
0\le \alpha_1\leq a_1(t,x) \leq A_1\cr
0\le \alpha_2\leq \abs{a_2(t,x)} \leq A_2.
\end{cases}
\end{equation}
}

 A vector valued function $(u(x,t),v(x,t))$ is called a {\it classical solution} of \eqref{u-v-eq1} on
$\Omega\times (t_1,t_2)$ ($-\infty\le t_1<t_2\le\infty$)  if $(u,v)\in C(\bar \Omega\times (t_1,t_2))\cap C^{2,1}(\bar \Omega\times (t_1,t_2))$ and satisfies \eqref{u-v-eq1} for $t\in (t_1,t_2)$ in the classical sense. A classical solution  $(u(x,t),v(x,t))$ of \eqref{u-v-eq1} on
$\Omega\times (t_1,t_2)$ is called {\it nonnegative} if $u(x,t)\ge 0$ and $v(x,t)\ge 0$ for $(x,t)\in\bar\Omega\times (t_1,t_2)$,
and is called {\it positive} if $\inf_{(x,t)\in\bar\Omega\times (t_1,t_2)} u(x,t)>0$ and $\inf_{(x,t)\in\bar\Omega\times (t_1,t_2)} v(x,t)>0$.
  $(u(x,t),v(x,t))$ is called an {\it entire classical solution} of \eqref{u-v-eq1}  if it is a classical solution of \eqref{u-v-eq1}
on $(-\infty,\infty)$.
For a given $t_0\in\RR$ and a given function $u_0(\cdot)$ on $\Omega$, it is said that  \eqref{u-v-eq1} has a {\it  classical solution with initial condition $u(x,t_0)=u_0(x)$}  if \eqref{u-v-eq1}  has a
classical solution, denoted by $(u(x,t;t_0,u_0),v(x,t;t_0,u_0))$,  on  $(t_0,T)$ for some $T>t_0$  satisfying that  $\lim_{t\to t_0+}u(\cdot,t;t_0,u_0)=u_0(\cdot)$ in certain sense.
A classical solution of \eqref{u-v-eq1} with initial condition $u(x,t_0)=u_0(x)$ exists globally if \eqref{u-v-eq1} has a classical solution $(u(x,t;t_0,u_0),v(x,t;t_0,u_0))$ with $u(x,t_0;t_0,u_0)=u_0(x)$ on $(t_0,\infty)$.

For given $1\le p<\infty$,  let $X=L^{p}(\Omega)$
  and $A=-\Delta+I$ with
  $$D(A)=\left\{ u \in W^{2,p}(\Omega) \, |  \, \frac{\p u}{\p n}=0 \quad \text{on } \, \p \Omega \right\}.
  $$
It is well known that  $A$ is a sectorial operator in $X$  (see, for example,  \cite[Example 1.6]{DH77}) and thus generates an analytic semigroup $\left(e^{-At}\right)_{t\geq 0}$ in $X$ (see, for example, \cite[Theorem 1.3.4]{DH77}). Moreover $0 \in \rho(A)$ and
$$
\|e^{-A t}u\|_{X}\le  e^{-t}\|u\|_{X}\quad {\rm for}\quad t\ge 0 \,{\rm and }\, u \in X,
 $$
Because $\Delta$ is  dissipative operator and $range(I-\Delta)=X,$ so it generates a strongly continuous semigroup of contraction on $X.$

 Let $X^{\alpha}=D(A^{\alpha})$ equipped with the graph norm  $\|u\|_{\alpha}:=\|u\|_{X^\alpha}=\|A^{\alpha}u\|_{p}$ (see, for example,
  \cite[Definition 1.4.7]{DH77}).

 Throughout this paper, $A$ and $X^\alpha$ are defined as in the above. For given $-\infty\le t_1<t_2\le \infty$ and $0\le \delta<1$, $C^\delta((t_1,t_2), X^\alpha)$ is the space of all locally H\"older continuous functions from $(t_1,t_2)$ to $X^\alpha$ with exponent $\delta$.
First of all, we have the following local existence theorem.

\begin{theorem}
\label{thm-001} Suppose that $p>1$ and $1/2 < \alpha<1$ are such that $X^{\alpha}\subset C^1(\bar{\Omega}).$
\begin{itemize}
\vspace{-0.05in}\item[(1)] For any
   $t_0 \in \mathbb{R}$  and $u_0 \in X^{\alpha}$ with $u_0 \geq 0$,  there exists $T_{\max} \in (0,\infty]$  such that $\eqref{u-v-eq1}$  has a unique non-negative classical solution $(u(x,t;t_0,u_0),v(x,t;t_0,u_0))$ on $(t_0,t_0+T_{\max})$   satisfying that $\lim_{t\to t_0}\|u(\cdot,t;t_0,u_0)-u_0(\cdot)\|_{X^\alpha}=0$, and
   \begin{equation}
   \label{local-1-eq1}
   u(\cdot,\cdot;t_0,u_0)\in C([t_0,t_0+T_{\max}),X^\alpha)\cap C^\delta((t_0,t_0+T_{\max}),X^\alpha)
   \end{equation}
   for some $0<\delta<1$.
Moreover if $T_{\max}< \infty,$ then
\vspace{-0.05in}\begin{equation}
\label{local-1-eq2}
\limsup_{t \nearrow T_{\max}}   \left\| u(\cdot,t+t_0;t_0,u_0) \right\|_{X^\alpha}=\limsup_{t \nearrow T_{\max}}   \left\| u(\cdot,t+t_0;t_0,u_0) \right\|_{C^0(\bar \Omega)}  =\infty.
\vspace{-0.05in}\end{equation}

\vspace{-0.1in}\item[(2)] For any given  $t_0 \in \mathbb{R}$  and $u_0 \in C^0(\bar{\Omega})$ with $u_0 \geq 0$,  there exists $T_{\max} \in (0,\infty]$  such that $\eqref{u-v-eq1}$  has a unique non-negative classical solution $(u(x,t;t_0,u_0),v(x,t;t_0,u_0))$ on $(t_0,t_0+T_{\max})$ satisfying that $\lim_{t\to t_0}\|u(\cdot,t;t_0,u_0)-u_0(\cdot)\|_{C^0(\bar\Omega)}=0$, and
\vspace{-0.05in}\begin{equation}
\label{local-2-eq1}
u(\cdot,\cdot;t_0,u_0)\in  C((t_0,t_0+T_{\max}),X^\alpha)\cap C^\delta((t_0,t_0+T_{\max}),X^\alpha)
\vspace{-0.05in}\end{equation}
for some $0<\delta<1$.
Moreover if $T_{\max}< \infty,$ then
\begin{equation}
\label{local-2-eq2}
\limsup_{t \nearrow T_{\max}} \left\| u(\cdot,t+t_0;t_0,u_0) \right\|_{C^0(\bar \Omega)}  =\infty.
\end{equation}
\end{itemize}
\end{theorem}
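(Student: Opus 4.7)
The plan is to exploit the ellipticity of the second equation to reduce \eqref{u-v-eq1} to a single semilinear evolution equation for $u$ alone, and then apply Henry's abstract sectorial theory. Since $Av=u$ (with $A=-\Delta+I$ under homogeneous Neumann conditions) is uniquely solvable by $v=A^{-1}u$, the system becomes
\begin{equation*}
u_t+Au=F(t,u),\qquad u(\cdot,t_0)=u_0,
\end{equation*}
where
\begin{equation*}
F(t,u)=u-\chi\nabla\cdot\bigl(u\,\nabla A^{-1}u\bigr)+u\Bigl(a_0(t,\cdot)-a_1(t,\cdot)u-a_2(t,\cdot)\textstyle\int_\Omega u\Bigr).
\end{equation*}

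For part (1), I would verify that $F:\RR\times X^\alpha\to X=L^p(\Omega)$ is locally H\"older in $t$ (inherited from (H1)) and locally Lipschitz on bounded sets of $X^\alpha$. The delicate ingredient is the chemotactic divergence: writing $\nabla\cdot(u\nabla v)=\nabla u\cdot\nabla v+u\Delta v=\nabla u\cdot\nabla v+u(v-u)$ and combining the embedding $X^\alpha\hookrightarrow C^1(\bar\Omega)$ with the elliptic estimate $\|A^{-1}u\|_{W^{2,p}}\le C\|u\|_{p}$ yields $\|F(t,u_1)-F(t,u_2)\|_{p}\le C(R)\|u_1-u_2\|_{X^\alpha}$ on balls of radius $R$ in $X^\alpha$. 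The local existence, uniqueness, and the abstract blow-up alternative $\limsup_{t\nearrow T_{\max}}\|u(\cdot,t)\|_{X^\alpha}=\infty$ then follow from the standard semilinear Cauchy theory in \cite{DH77}. Non-negativity is obtained by regarding the $u$-equation as a linear parabolic equation $u_t-\Delta u+\chi\nabla v\cdot\nabla u+c(t,x)u=0$ with bounded coefficients (once the solution is known) and invoking the parabolic maximum principle under Neumann conditions. To upgrade the blow-up criterion to the $C^0$-norm, I would argue by contradiction: assume $\sup_{t\in[t_0,T_{\max})}\|u(\cdot,t)\|_{C^0}<\infty$ and use elliptic $W^{2,q}$-regularity with large $q$ to bound $\|\nabla v\|_{L^\infty}$ uniformly, so that $\|F(t,u(\cdot,t))\|_{p}$ stays bounded; inserting this into the Duhamel formula $u(t)=e^{-A(t-t_0)}u_0+\int_{t_0}^{t}e^{-A(t-s)}F(s,u(s))\,ds$ together with $\|A^\alpha e^{-A\tau}\|_{X\to X}\le C\tau^{-\alpha}e^{-\tau}$ and a Gronwall step keeps $\|u(\cdot,t)\|_{X^\alpha}$ bounded on $[t_0,T_{\max})$, contradicting the abstract criterion.

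For part (2), the datum $u_0\in C^0(\bar\Omega)$ need not lie in $X^\alpha$, so I would instead run a Banach fixed-point argument on the same Duhamel representation in a time-weighted space
\begin{equation*}
\mathcal{S}_T=\Bigl\{u\in C([t_0,t_0+T];C^0(\bar\Omega)):\sup_{t_0<t<t_0+T}(t-t_0)^\alpha\|u(\cdot,t)\|_{X^\alpha}<\infty\Bigr\},
\end{equation*}
endowed with the natural weighted norm. The weight $(t-t_0)^\alpha$ is calibrated exactly to the singularity that $\|A^\alpha e^{-A\tau}\|\le C\tau^{-\alpha}$ introduces when acting on a merely continuous initial datum (together with $X^\alpha\hookrightarrow C^1$), and $T$ is chosen small enough so that the nonlinear map is a contraction. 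The resulting solution automatically lies in $X^\alpha$ for every $t>t_0$, so non-negativity and the $C^0$ blow-up alternative are inherited from part (1) applied with initial time $t_0+\varepsilon$.

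The main obstacle is controlling the quadratic chemotactic term $\nabla\cdot(u\nabla A^{-1}u)$ in $L^p$, which hinges in an essential way on the embedding $X^\alpha\hookrightarrow C^1(\bar\Omega)$ (hence the standing assumption $\alpha>1/2$ with $p$ large enough) paired with elliptic regularity for $A^{-1}$. In part (2) a secondary difficulty is the $(t-t_0)^\alpha$-singularity of the $X^\alpha$-norm at the initial time, which must be absorbed by the time weight in the fixed-point space so that the semigroup smoothing just barely closes the iteration.
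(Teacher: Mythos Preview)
Your proof of part (1) matches the paper's argument essentially line by line: reduce to $u_t+Au=F(t,u)$, verify $F:\RR\times X^\alpha\to L^p$ is locally H\"older in $t$ and locally Lipschitz in $u$ via $X^\alpha\hookrightarrow C^1(\bar\Omega)$ and elliptic regularity for $A^{-1}$, invoke Henry's theory, obtain nonnegativity by the parabolic maximum principle, and upgrade the blow-up alternative to the $C^0$-norm by a Duhamel--Gronwall contradiction.

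For part (2) your route is genuinely different from the paper's. The paper does \emph{not} use a time-weighted $X^\alpha$ space; instead it runs the contraction directly in the unweighted ball $\mathcal{S}_{T,R}=\{u\in C([t_0,t_0+T];C^0(\bar\Omega)):\|u\|\le R\}$, keeping the chemotactic term in divergence form $\nabla\cdot(u\nabla A^{-1}u)$ and exploiting the Horstmann--Winkler smoothing estimate $\|A^\beta e^{-\tau A}\nabla\cdot w\|_{L^p}\le C\tau^{-\beta-\frac12-\epsilon}\|w\|_{L^p}$ (Lemma~\ref{lem-003}) with $\frac{n}{2p}<\beta<\frac12$ so that $X^\beta\hookrightarrow C^0$. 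This closes entirely in $C^0$; the $X^\alpha$-regularity for $t>t_0$ is then obtained in a \emph{separate} step by freezing $u$ in the coefficients, applying Henry's linear theory to $\tilde u_t=\Delta\tilde u+B(t)\tilde u$, and identifying $\tilde u$ with $u$. Your weighted-space scheme collapses existence and regularity into one fixed-point argument, which is conceptually cleaner, but the weight $(t-t_0)^\alpha$ with $\alpha>\frac12$ interacts badly with the quadratic structure: to close the $C^0$-part of the norm you will need $\int_{t_0}^t(t-s)^{-\beta}(s-t_0)^{-\alpha}\,ds$ to be small, forcing $\alpha+\beta<1$ and hence an additional constraint of the type $p>2n$ rather than the $p>n$ implicit in the hypothesis $X^\alpha\hookrightarrow C^1$. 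The paper's divergence-form trick sidesteps this by never invoking $\|u(s)\|_{X^\alpha}$ in the contraction, so it works under the stated hypotheses without extra restrictions on $p$.
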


\begin{remark}
\begin{itemize}
\item[(1)]
Since  $X^{\alpha}\subset C^1(\bar\Omega)\subset C^0(\bar\Omega)$, the existence of a local classical solution in Theorem \ref{thm-001}(1)  is guaranteed by Theorem \ref{thm-001}(2). However $\lim_{t\to t_0}u(\cdot,\cdot; t_0,u_0)=u_0(\cdot)$ in the $X^\alpha$-norm in Theorem \ref{thm-001}(1)
  is  not included in Theorem \ref{thm-001} (2).

\vspace{-0.1in}\item[(2)] Theorem \ref{thm-001}(2) is consistent (one species version ) with \cite[Lemma 2.1]{STW13}.

\vspace{-0.1in}\item[(3)] Semigroup theory and fixed point theorems together with regularity and a prior estimates for elliptic and parabolic equations
are among basic tools used in  literature  to prove the local existence of classical solutions of chemotaxis models with various given initial functions. For the self-completeness, we will give a proof of Theorem \ref{thm-001}(1) by using semigroup theory and give a proof of Theorem \ref{thm-001}(2) based on the combination of fixed point theorems and semigroup theory.
\end{itemize}
\end{remark}

 We next consider the global existence of classical solutions of \eqref{u-v-eq1} with given initial functions. Throughout the paper, we put
\vspace{-0.05in} \begin{equation}
 \label{a-i-sup-inf-eq1}
a_{i,\inf}=\inf _{ t \in\RR,x \in\bar{\Omega}}a_i(t,x),\quad a_{i,\sup}=\sup _{ t \in\RR,x \in\bar{\Omega}}a_i(t,x),
 \vspace{-0.05in}\end{equation}
 \begin{equation}
 \label{a-i-sup-inf-eq2}
a_{i,\inf}(t)=\inf _{x \in\bar{\Omega}}a_i(t,x),\quad a_{i,\sup}(t)=\sup _{x \in\bar{\Omega}}a_i(t,x),
\vspace{-0.05in} \end{equation}
 unless specified otherwise. For convenience, we introduce the following assumptions.

 \smallskip
\noindent {\bf (H2)} {\it $a_1(t,x)$, $a_2(t,x)$, and $\chi$ satisfy
\begin{equation}
\label{global-existence-cond-eq}
\inf_{t \in \mathbb{R}} \Big\{ a_{1,\inf}(t)-|\Omega|\Big( a_{2,\inf}(t)\Big)_- \Big\}>(\chi)_+,
 \end{equation}
  where $|\Omega|$ is the Lebesgue measure of $\Omega$ and $(\chi)_+=\max\{0,\chi\}$.
}

\medskip
\noindent {\bf (H2)$^{'}$} {\it $a_1(t,x)$, $a_2(t,x)$, and $\chi$ satisfy
 $\inf_{t \in \mathbb{R}}\big \{ a_{1,\inf}(t)-|\Omega|\big(a_{2,\inf}(t)\big)_-\big \}>0$ and  if $n\geq 3,$ $a_{1,\inf}>\frac{\chi(n-2)}{n}$.
}

\medskip

The following is our  main result on  the global existence of positive classical solutions to system  \eqref{u-v-eq1}.

\begin{theorem}
\label{thm-002}
\begin{itemize}
\item[(1)] Assume that {\bf (H2) } holds.
Then for any  $t_0\in\RR$ and  $u_0 \in C^0(\bar{\Omega})$ with $u_0\ge 0$,
$\eqref{u-v-eq1}$ has a unique   global  classical solution $(u(x,t;t_0,u_0),v(x,t;t_0,u_0))$  which satisfies that  $\lim_{t\to t_0}\|u(\cdot,t;t_0,u_0)-u_0(\cdot)\|_{C^0(\bar\Omega)}=0$ and  \eqref{local-2-eq1}, \eqref{local-2-eq2}.  Moreover,  we have
\vspace{-0.05in}\begin{align}
\label{global-1-eq1}
0\le v(x,t;t_0,u_0) &\leq \max_{x \in \bar \Omega}u(x,t;t_0,u_0) \nonumber\\
 &\leq   \max\Big\{\sup u_0(x), \frac{a_{0,\sup}}{\inf_{t \ge t_0} \big\{ a_{1,\inf}(t)-|\Omega|\Big( a_{2,\inf}(t)\Big)_--(\chi)_+ \big\}} \Big\}
 \vspace{-0.05in}\end{align}
 for all $(x,t) \in \bar{\Omega} \times [t_0,\infty)$.

\vspace{-0.05in}\item[(2)] Assume that {\bf (H2)$^{'}$} holds. Then for any $t_0\in\RR$ and $u_0 \in C^0(\bar{\Omega})$  with $u_0\ge 0$,  system  $\eqref{u-v-eq1}$ has a unique  global  classical solution $(u(x,t;t_0,u_0),v(x,t;t_0,u_0))$ which satisfies that
$\lim_{t\to t_0}\|u(\cdot,t;t_0,u_0)-u_0(\cdot)\|_{C^0(\bar\Omega)}=0$ and   \eqref{local-2-eq1}, \eqref{local-2-eq2}. Moreover,
\[ \left\| u(\cdot,t;t_0,u_0) \right\|_{C^0(\bar \Omega)}  + \,     \left\| v(\cdot,t;t_0,u_0) \right\|_{C^0(\bar \Omega)} \leq C\]
for all $t\ge t_0$, where $C=C( \left\| u_0 \right\|_{C^0(\bar \Omega)}),$ i.e, $C$ depends only on $\left\| u_0 \right\|_{C^0(
\bar \Omega)}$,
and
$$
0 \leq  \int_{\Omega}u(x,t;t_0,u_0) dx\leq \max\Big\{\int_{\Omega}u_0(x),\frac{|\Omega|a_{0,\sup}}{\inf_{t \in \mathbb{R}} \big \{ a_{1,\inf}(t)-|\Omega|\Big(a_{2,\inf}(t)\Big)_-\big \}} \Big\} \, \forall t \geq t_0.
$$
\end{itemize}
\end{theorem}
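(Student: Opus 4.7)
My plan is to extend the local solution from Theorem~\ref{thm-001}(2) to a global one by verifying that the blow-up criterion~\eqref{local-2-eq2} cannot be triggered, i.e., by establishing an a priori $L^\infty$-bound on $u$ on the maximal interval of existence. Throughout I work on $[t_0,t_0+T_{\max})$ and assume the bounds I derive are independent of $T_{\max}$.

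For part~(1) the strategy is a pointwise comparison argument. First, applying the elliptic maximum principle to $-\Delta v+v=u$ with Neumann data, I obtain $0\le v(x,t)\le \max_{y\in\bar\Omega}u(y,t)$. Next, expanding the chemotactic term and substituting $\Delta v=v-u$, I rewrite the $u$-equation as
\begin{equation*}
u_t=\Delta u-\chi\nabla u\cdot\nabla v+\chi u(u-v)+u\Big(a_0-a_1u-a_2\int_\Omega u\Big),
\end{equation*}
and apply the maximum principle to $M(t):=\max_{x\in\bar\Omega}u(x,t)$. At a point $x_M$ realizing the maximum, $\nabla u(x_M,t)=0$ and $\Delta u(x_M,t)\le 0$. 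The cross term satisfies the crucial inequality $\chi u(u-v)\le (\chi)_+u^2$ at $x_M$, valid for both signs of $\chi$ because $0\le v(x_M,t)\le M(t)$. Combined with the estimate $-a_2(t,x_M)\int_\Omega u\le |\Omega|(a_{2,\inf}(t))_-M(t)$, this produces the logistic differential inequality
\begin{equation*}
M'(t)\le a_{0,\sup}\,M(t)-\Big[a_{1,\inf}(t)-|\Omega|(a_{2,\inf}(t))_-{-}(\chi)_+\Big]M(t)^2.
\end{equation*}
Hypothesis~{\bf (H2)} makes the bracket bounded below by a positive constant, so comparison with the stationary logistic equation yields exactly the bound in~\eqref{global-1-eq1}. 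The bound on $v$ follows by the maximum principle above. Any minor regularity issue with the not-necessarily-smooth $M(t)$ can be handled by working with the first time $M$ exceeds the claimed barrier and deriving a contradiction from the sign of $M'$ there.

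For part~(2) the condition~{\bf (H2)$^{'}$} is too weak for a pointwise argument, so I would run an $L^p$--$L^\infty$ bootstrap in the spirit of Tello--Winkler. Integrating the $u$-equation over $\Omega$, the Neumann conditions kill both divergence terms, and using $\int u^2\ge |\Omega|^{-1}(\int u)^2$ together with $-(\int a_2u)(\int u)\le (a_{2,\inf}(t))_-(\int u)^2$ yields
\begin{equation*}
\frac{d}{dt}\int_\Omega u\le a_{0,\sup}\int_\Omega u-|\Omega|^{-1}\big[a_{1,\inf}(t)-|\Omega|(a_{2,\inf}(t))_-\big]\Big(\int_\Omega u\Big)^2,
\end{equation*}
which under~{\bf (H2)$^{'}$} gives the stated $L^1$ bound by ODE comparison. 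Next, testing the $u$-equation against $u^{p-1}$, integrating by parts, and rewriting the chemotactic contribution via $-\int u^p\Delta v=\int u^p(u-v)$ leads to
\begin{equation*}
\frac{1}{p}\frac{d}{dt}\int_\Omega u^p+\frac{4(p-1)}{p^2}\int_\Omega|\nabla u^{p/2}|^2\le\Big(\chi\tfrac{p-1}{p}-a_{1,\inf}\Big)\int_\Omega u^{p+1}+\text{(lower-order terms)},
\end{equation*}
where the lower-order terms involve $\int u^p v$, $a_{0,\sup}\int u^p$, and $(a_{2,\inf}(t))_-(\int u^p)(\int u)$, the last being tamed by Step~1. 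The dimension-dependent condition $a_{1,\inf}>\chi(n-2)/n$ is precisely what is needed to choose some $p>n/2$ making the coefficient of $\int u^{p+1}$ negative after absorbing the residual $\int u^pv$ through a Sobolev/Gagliardo--Nirenberg interpolation. Gronwall then gives a uniform $L^p$ bound on $u$; elliptic regularity applied to $-\Delta v+v=u$ upgrades this to $\|\nabla v\|_{L^\infty}$ bounded; a final Moser iteration (or an $L^p$--$L^q$ semigroup iteration using the analytic semigroup from the preamble to Theorem~\ref{thm-001}) on the $u$-equation promotes the $L^p$ bound to the $C^0$ bound depending only on $\|u_0\|_{C^0}$, after which $\|v\|_{C^0}$ follows.

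The main obstacle is closing the $L^p$-estimate in the second step of part~(2): the two surviving terms $\chi\tfrac{p-1}{p}\int u^{p+1}$ and $-\chi\tfrac{p-1}{p}\int u^p v$ must be balanced against the dissipation $\int|\nabla u^{p/2}|^2$ and the logistic damping $-a_{1,\inf}\int u^{p+1}$, and it is exactly this balancing, via the Sobolev exponent in dimension $n\ge 3$, that accounts for the sharp constant $(n-2)/n$ in~{\bf (H2)$^{'}$}. All subsequent steps (boosting from $L^p$ to $L^\infty$, bounding $v$, and extending to $[t_0,\infty)$ via~\eqref{local-2-eq2}) are standard once this estimate is in hand.
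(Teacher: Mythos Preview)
Your approach to part~(2) is essentially the paper's: an $L^1$ bound from integrating the equation, then the $L^p$ energy identity via testing against $u^{p-1}$, and finally a semigroup/Moser bootstrap to $L^\infty$. One small correction: the term $-\chi\tfrac{p-1}{p}\int u^p v$ has a favorable sign when $\chi>0$ (since $v\ge 0$) and is simply dropped in the paper; it does not require interpolation. The constant $(n-2)/n$ arises purely from the algebraic requirement that some $p>n/2$ satisfy $\chi(p-1)/p<a_{1,\inf}$; Gagliardo--Nirenberg enters only afterwards, in passing from this fixed $p>n/2$ to arbitrary $L^\gamma$ bounds.

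Your route for part~(1), however, differs from the paper's. You run a direct pointwise maximum-principle argument on $M(t)=\max_x u(x,t)$, obtaining a scalar logistic ODE inequality. The paper instead introduces a coupled ODE system~\eqref{ode0} for a pair $(\bar u(t),\underline u(t))$ and proves $\underline u(t)\le u(x,t)\le \bar u(t)$ by an $L^2$ energy argument: setting $\overline U=u-\bar u$ and $\underline U=u-\underline u$, one multiplies the corresponding parabolic inequalities by $\overline U_+$ and $-\underline U_-$, integrates, and shows by Gronwall that $\int(\overline U_+^2+\underline U_-^2)$ stays zero. Your argument is shorter and suffices for the upper bound~\eqref{global-1-eq1}, but the paper's two-sided comparison is not wasted effort: the lower barrier $\underline u(t)$ and the full machinery of~\eqref{ode0} are reused verbatim in the proofs of Theorems~\ref{thm-004} and~\ref{thm-005} on asymptotic behavior, where both bounds are needed simultaneously. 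So the paper's choice trades economy here for reusability later.
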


\begin{remark}
\begin{itemize}
\vspace{-0.1in}\item[(1)] When $a_2(t,x) \geq 0$, {\bf (H2)$^{'}$} becomes $a_{1,\inf}>\max\{\frac{\chi(n-2)}{n},0\}$.
In particular, if $a_2(t,x)=0$, Theorem \ref{thm-002}  is consistent with the result by Tello and Winkler in \cite{TW07}.

\vspace{-0.1in}\item[(2)] When the coefficients are constant, the condition {\bf (H2)} becomes $a_1-|\Omega|(a_2)_->(\chi)_+$ which is consistent with the result of  global existence  by Negreanu and Tello in \cite{NT13}.

\vspace{-0.1in}\item[(3)]  {\bf (H2)} implies  {\bf (H2)$^{'}$}. Therefore the global existence of bounded classical solutions of $\eqref{u-v-eq1}$ in Theorem \ref{thm-002}(1) follows from Theorem \ref{thm-002}(2). However the explicit bound given by  \eqref{global-1-eq1} is not included in Theorem \ref{thm-002}(2). Note that the explicit bound \eqref{global-1-eq1} will be used in the proof of the existence of periodic solutions (resp.  steady state solutions) when the coefficients $a_i(t,x)$ are periodic (resp. when $a_i(t,x)=a_i(x)$) (see Theorem 1.3).

\vspace{-0.1in}\item[(4)] In general, assuming that $\inf_{t \in \mathbb{R}} \big\{a_{1,\inf}(t)-|\Omega|\Big( a_{2,\inf}(t)\Big)_-\big\}>0$, it remains open whether for any given $t_0\in\RR$ and $u_0\in C^0(\bar \Omega)$,
\eqref{u-v-eq1} has a global classical solution $(u(x,t;t_0,u_0)$, $v(x,t;t_0,u_0))$. This is open even in the case that $a_i(t,x)\equiv a_i$
for $i=0,1$ and
 $a_2(t,x)=0$.

\vspace{-0.1in}\item[(5)] When $\chi\leq 0,$ conditions {\bf (H2)} and {\bf (H2)$^{'}$} become $\inf_{t \in \mathbb{R}}\big \{ a_{1,\inf}(t)-|\Omega|\big(a_{2,\inf}(t)\big)_-\big \}>0.$ Thus global existence holds under this weak condition.
\end{itemize}
\end{remark}

We now state our main result on the existence  of  entire bounded positive solutions of \eqref{u-v-eq1}.

\begin{theorem}
\label{thm-003} Suppose that {\bf (H2)} holds.
  Then there is an entire positive bounded classical solution $(u,v)=(u^*(x,t),v^*(x,t))$ of \eqref{u-v-eq1}.  Moreover,  the following hold.
\begin{itemize}
\vspace{-0.05in}\item[(1)] If there is $T>0$ such that $a_i(t+T,x)=a_i(t,x)$ for $i=0,1,2$, then  $\eqref{u-v-eq1}$  has a  positive  periodic solution $(u,v)=(u^*(x,t), v^*(x,t))$ with period $T$.

\vspace{-0.05in}\item[(2)]  If  $a_i(t,x)\equiv a_i(t)$ for $i=0,1,2$,   then  $\eqref{u-v-eq1}$  has a unique entire  positive spatially homogeneous
 solution  $(u,v)=(u^*(t),v^*(t))$ with $v^*(t)=u^*(t)$, and if $a_i(t)$ $(i=0,1,2)$ are periodic or almost periodic, so is   $(u^*(t),v^*(t))$.

\vspace{-0.05in}\item[(3)] If   $a_i(t,x)\equiv a_i(x)$ for $i=0,1,2$, then   $\eqref{u-v-eq1}$  has a positive  steady state solution  $(u,v)=(u^*(x), v^*(x))$.
\end{itemize}
\end{theorem}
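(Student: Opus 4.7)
My plan is to first construct an entire bounded positive classical solution of \eqref{u-v-eq1} by a compactness-based limiting procedure, and then derive the three refinements (1)--(3) by specializing or adapting that construction. For the unconditional existence, I fix a positive constant $K$ at least as large as the right-hand side of \eqref{global-1-eq1}, and for each $n \in \NN$ set $(u_n(x,t), v_n(x,t)) := (u(x,t;-n,K), v(x,t;-n,K))$, the global classical solution supplied by Theorem \ref{thm-002}(1). By \eqref{global-1-eq1} the family $(u_n)$ is bounded in $C^0(\bar\Omega)$ uniformly in $n$ and $t \geq -n$, and the elliptic relation $v_n = (I - \Delta)^{-1} u_n$ transfers this bound (via Schauder estimates) to $v_n$, $\nabla v_n$, and $\Delta v_n$. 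Expanding $\nabla \cdot (u \nabla v)$ and using $\Delta v = v - u$, I rewrite the first equation as a linear parabolic equation for $u_n$ with uniformly bounded drift $-\chi \nabla v_n$ and bounded polynomial reaction; classical parabolic H\"older/Schauder theory then yields uniform $C^{2+\beta, 1+\beta/2}$ bounds on every parabolic cylinder $\bar\Omega \times [a,b]$ for $n$ sufficiently large. A diagonal Arzela--Ascoli argument produces a subsequence converging in $C^{2,1}_{\mathrm{loc}}(\bar\Omega \times \RR)$ to a pair $(u^*, v^*)$ that is an entire bounded nonnegative classical solution by passage to the limit. Positivity of $u^*$ follows from the strong parabolic maximum principle once $u^* \not\equiv 0$ is established, after which $v^* = (I - \Delta)^{-1} u^* > 0$ by the strong elliptic maximum principle.

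For (1), assume $a_i(t + T, x) = a_i(t, x)$ and take the approximating initial times along the subsequence $-nT$. The $T$-periodicity of the coefficients, combined with uniqueness of classical solutions (Theorem \ref{thm-002}), gives the identity $u_n(x, t + T) = u_{n+1}(x, t)$ on the common domain of definition; passing to the $C^{2,1}_{\mathrm{loc}}$-limit yields $u^*(x, t + T) = u^*(x, t)$ and similarly for $v^*$. For (2), with $a_i = a_i(t)$, I look for solutions independent of $x$; the Neumann elliptic equation then forces $v = u$, and substitution reduces the first equation to the scalar logistic ODE
$$u'(t) = u(t)\bigl(a_0(t) - [a_1(t) + |\Omega| a_2(t)]\, u(t)\bigr),$$
whose coefficient satisfies $a_1(t) + |\Omega| a_2(t) \geq a_{1,\inf}(t) - |\Omega|(a_{2,\inf}(t))_- > 0$ by (H2). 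Existence of a unique bounded positive entire solution and its inheritance of periodicity or almost-periodicity from the coefficients follow from the classical scalar results of Nkashama \cite{NMN00} and Shen--Yi \cite{ShYi}. For (3), with $a_i = a_i(x)$ the coefficients are $T$-periodic for every $T > 0$; applying (1) with $T_k = 1/k$ furnishes for each $k$ a $T_k$-periodic positive entire solution $(u_k^*, v_k^*)$. These all lie in the same uniformly bounded family and enjoy uniform parabolic regularity, so Arzela--Ascoli supplies a subsequential $C^{2,1}_{\mathrm{loc}}$-limit $(u^*, v^*)$; writing any $s \in \RR$ as $s = m_k T_k + r_k$ with $|r_k| < T_k$ and $m_k \in \ZZ$, the $T_k$-periodicity gives $u_k^*(x, t + s) = u_k^*(x, t + r_k) \to u^*(x, t)$ as $k \to \infty$, so $u^*(x, t + s) = u^*(x, t)$ for every $s$, i.e., $(u^*, v^*)$ is a positive steady state.

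The main obstacle is the nontriviality/positivity step in the general construction: nonnegativity and uniform $C^0$ bounds are immediate from Theorem \ref{thm-002}(1), but excluding the possibility $u^* \equiv 0$ does not follow from the upper estimate \eqref{global-1-eq1} alone. The natural route is to derive an ODE differential inequality for the spatial mass $m_n(t) := \int_\Omega u_n(x,t)\, dx$ (the diffusion and chemotaxis terms vanish after integration by parts thanks to the Neumann condition), then combine the pointwise upper bound $u_n \leq M$ with $a_{0,\inf} > 0$ to prevent $m_n$ from decaying exponentially uniformly in $n$. Once a uniform positive lower bound on $m_n(t)$ for $t$ in compact subsets of $\RR$ is secured, the limit $u^*$ is nontrivial and the strong parabolic maximum principle upgrades nontriviality to strict positivity throughout $\bar\Omega \times \RR$, completing the proof.
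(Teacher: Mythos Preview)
Your overall architecture—start solutions at times $-n$ with constant data, extract a subsequential limit via parabolic compactness—matches the paper's, and your treatments of (2) and (3) are essentially the same as the paper's. However there are two genuine gaps.

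\textbf{Positivity.} The mass ODE route you sketch does not yield the needed uniform lower bound. Integrating the equation and using $0\le u_n\le M$ gives
\[
m_n'(t) \ \ge\ \alpha_0\, m_n - A_1 \int_\Omega u_n^2 - A_2\, m_n^2 \ \ge\ (\alpha_0 - A_1 M)\, m_n - A_2\, m_n^2,
\]
but $\alpha_0 - A_1 M$ is typically negative: already for constant $a_0,a_1$ with $a_2\equiv 0$ one has $M=a_0/(a_1-\chi)$ and $\alpha_0 - A_1 M = -a_0\chi/(a_1-\chi)<0$. The inequality then permits exponential decay of $m_n$ on $[-n,0]$, so the lower bound at $t=0$ degenerates as $n\to\infty$. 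Moreover, even a uniform bound on $m_n$ would only give $u^*\not\equiv 0$ and hence pointwise positivity; it does not give $\inf_{x,t} u^*(x,t)>0$, which is the paper's definition of a positive entire solution. The paper handles this through Lemmas~\ref{lem-008}--\ref{lem-0010}, which establish a \emph{pointwise} invariant lower bound: there is $\delta^*>0$ such that $\delta^*\le u_0\le M$ implies $u(\cdot,t_0+1;t_0,u_0)\ge\delta^*$ for every $t_0$. The proof of Lemma~\ref{lem-0010} is by contradiction and compactness: small data stay small for time $1$ (Lemma~\ref{lem-008}), hence $v$ is small, hence logistic ODE comparison forces growth (Lemma~\ref{lem-009}); for data that are not uniformly small one rules out decay by a limiting argument and the strong maximum principle. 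With $\delta^*$ in hand, the approximants satisfy $\delta^*\le u_n(\cdot,k)\le M$ for every integer $k\ge -n$, and the limit inherits this.

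\textbf{Periodicity.} Your identity $u_n(x,t+T)=u_{n+1}(x,t)$ is correct, but passing to a subsequential limit does not give $u^*(x,t+T)=u^*(x,t)$: along $n_k$ the left side tends to $u^*(x,t+T)$ while the right side is $u_{n_k+1}(x,t)$, and $n_k+1$ need not lie in the subsequence, so there is no reason $u_{n_k+1}$ converges to the same limit as $u_{n_k}$. The shift relation only forces $u^*(\cdot,\cdot+T)$ to coincide with some possibly different entire solution. The paper bypasses this by applying the Schauder fixed point theorem: once the invariant region $E(T)=\{\delta^*\le u_0\le M\}\subset C^0(\bar\Omega)$ is available, the time-$T$ map $u_0\mapsto u(\cdot,T;0,u_0)$ sends $E(T)$ into itself and is compact by parabolic smoothing, so it has a fixed point $u^T$, and $(u(\cdot,t;0,u^T),v(\cdot,t;0,u^T))$ is the desired $T$-periodic solution.
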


\begin{remark}
\begin{itemize}
\item[(1)] When the coefficients are only time dependent,   i.e,   $a_i(t,x)=a_i(t)$ for $i=0,1,2$,  every entire positive solution $u(t)$ of the ODE \vspace{-0.1in}
     $$u_t=u[a_0(t)-(a_1(t)+|\Omega|a_2(t))u]
     \vspace{-0.1in}$$
is  an entire positive solution of  the  first equation of  \eqref{u-v-eq1} and then $(u(t),v(t))$ with $v(t)=u(t)$ is an entire positive solution of
\eqref{u-v-eq1}. Thus  $\eqref{u-v-eq1}$ has  an entire solution under the weaker assumption    $\inf_{t \in \mathbb{R}}
 \{ a_1(t)-|\Omega|(a_2(t))_-\}>0$ (see Lemma \ref{lem-00001}).
In general, due to the lack of comparison principle for system \eqref{u-v-eq1}, it is fairly nontrivial to prove the existence of  entire positive solutions.

\vspace{-0.1in}\item[(2)] It should be mentioned that there may be lots of entire positive solutions (see \cite{kuto_PHYSD}, \cite{TW07}).

\vspace{-0.1in}\item[(3)] The existence of entire positive bounded classical  of \eqref{u-v-eq1} also holds under the weaker assumption {\bf (H2)${'}$} (see Remarks \ref{remark-h3-1} and \ref{remark-h3-2}). However under {\bf (H2)$^{'}$}, it reminds open whether there are periodic solutions of \eqref{u-v-eq1} when the coefficients $a_i(t,x)$ are periodic (resp.  steady state solutions of \eqref{u-v-eq1} when  $a_i(t,x)\equiv a_i(x)$).
    \end{itemize}
\end{remark}

Finally we state the main results on the stability and uniqueness of entire positive solutions and asymptotic behavior of positive solutions of \eqref{u-v-eq1}.

\begin{theorem}
\label{thm-004}
\begin{itemize}
\item[(1)] If  $a_i(t,x)\equiv a_i(t)$ for $i=0,1,2$ and
\vspace{-0.05in}\begin{equation}
\label{stability-cond-1-eq1}
\inf_t \big\{a_1(t)-|\Omega|\abs{a_2(t)}\big\} >2(\chi)_+,
\vspace{-0.05in}\end{equation}
  then  for any
$t_0\in\RR$ and  $u_0 \in C^0(\bar{\Omega})$ with $u_0\ge 0$ and
$u_0\not\equiv 0$,  the  unique global classical solution
$(u(x,t;t_0,u_0), v(x,t;t_0,u_0))$ of $\eqref{u-v-eq1}$ satisfies
\vspace{-0.05in}\begin{equation}
\label{global-stability-1-eq1}
\lim_{t \to \infty}\big(  \left\| u(\cdot,t;t_0,u_0)-u^*(t) \right\|_{C^0(\bar\Omega)} +\left\| v(\cdot,t;t_0,u_0)-u^*(t) \right\|_{C^0(\bar\Omega)}\big)=0,
\vspace{-0.05in}\end{equation}
where   $u^*(t)$ is the unique spatially homogeneous entire positive solution  of \eqref{u-v-eq1}.

\vspace{-0.1in}\item[(2)] Suppose that
\vspace{-0.05in}\begin{equation}
\label{stability-cond-2-eq1}
  \inf_{t \in \mathbb{R}}\Big \{a_{1,\inf}(t)-|\Omega|\Big(a_{2,\inf}(t)\Big)_-\Big\}>\Big\{(\chi )_++\frac{a_{0,\sup}}{
  a_{0,\inf}}\Big((\chi )_++|\Omega|\sup_{t \in \mathbb{R}}\Big({ a_{2,\sup}(t)}\Big)_+ \Big)\Big\}
\vspace{-0.05in}\end{equation}
and
\vspace{-0.05in}\begin{equation}
\label{stability-cond-2-eq2}
\limsup_{t-s\to\infty}\frac{1}{t-s}\int_{s}^{t}(L_2(\tau)- L_1(\tau))d\tau <0,
\vspace{-0.05in}\end{equation}
where
 \vspace{-0.05in}\begin{equation}
 \label{L-eq1}
 L_1(t)={2r_2\big(a_{1,\inf}(t)+|\Omega|(a_{2,\inf}(t))_+\big)},
 \vspace{-0,05in}\end{equation}
\vspace{-0,05in} \begin{align}
 \label{L-eq2}
 L_2(t)=a_{0,\sup}(t)+\frac{\chi }{2}(r_1-r_2)+\frac{(\chi  r_1)^2}{2}+|\Omega|r_1\big (2(a_{2,\inf}(t))_-+  { \big(a_{2,\sup}(t)\big)_+}\big),
\vspace{-0.05in}\end{align}
and
\begin{equation}
\label{r-eq1}
r_1=\frac{ \sup_{t \in\RR} \{a_{1,\sup}(t)-|\Omega|(a_{2,\sup}(t))_--(\chi)_+\} a_{0,\sup}- a_{0,\inf} \big((\chi)_+ +
|\Omega|\inf_t(a_{2,\inf}(t))_+\big)} {h(\chi)},
\end{equation}
\begin{equation}
\label{r-eq2}
r_2=\frac{ \inf_{t \in\RR}\{ { a_{1,\inf}(t)}-|\Omega|(a_{2,\inf}(t))_--(\chi)_+ \} a_{0,\inf}- a_{0,\sup}\big((\chi)_+ +
|\Omega|\sup_t (a_{2,\sup}(t))_+\big)} { h(\chi)},
\end{equation}
\begin{align*}
h(\chi)=& \inf_{t \in\RR} \{a_{1,\inf}(t)-|\Omega|(a_{2,\inf}(t))_--(\chi)_+\}\sup_{t \in\RR} \{a_{1,\sup}(t)-|\Omega|(a_{2,\sup}(t))_--(\chi)_+\}\\
&-\big((\chi)_++|\Omega|\inf_{t \in \mathbb{R}}(a_{2,\inf}(t))_+\big)\big((\chi)_+
+|\Omega|\sup_{t \in \mathbb{R}}(a_{2,\sup}(t))_+\big).
\end{align*}

 Then \eqref{u-v-eq1} has a unique entire positive solution $(u^*(x,t),v^*(x,t))$, and,  for any
$t_0\in\RR$ and  $u_0 \in C^0(\bar{\Omega})$ with $u_0\ge 0$ and
$u_0\not\equiv 0$,  the  global classical solution
$(u(x,t;t_0,u_0)$,  $v(x,t;t_0,u_0))$ of $\eqref{u-v-eq1}$ satisfies
  \begin{equation}
  \label{global-stability-2-eq1}
  \lim_{t \to \infty}\Big(\|u(\cdot,t;t_0,u_0)-u^*(\cdot,t)\|_{C^0(\bar\Omega)}+\|v(\cdot,t;t_0,u_0)-v^*(\cdot,t)\|_{C^0(\bar\Omega)}\Big)=0.
  \end{equation}
 If, in addition, $a_i(t,x)\equiv a_i(x)$  (resp. $a_i(t+T,x)=a_i(t,x)$,  $a_i(t,x)$  is almost periodic in $t$ uniformly with respect to $x$) for $i=0,1,2$, then
\eqref{u-v-eq1} has a unique positive steady state solution $(u^*(x),v^*(x))$ (resp. \eqref{u-v-eq1} has a unique time periodic positive solution $(u^*(x,t),v^*(x,t))$ with period $T$, \eqref{u-v-eq1} has
a unique time almost periodic solution $(u^*(x,t),v^*(x,t))$).
\end{itemize}
\end{theorem}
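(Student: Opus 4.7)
The plan is to reduce everything to tracking the evolution of an $L^2$-type distance between an arbitrary solution and an entire positive solution.

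For part (1), since the coefficients are $x$-independent, there is a (unique) bounded entire positive solution $u^*(t)$ of the scalar ODE $u^*_t = u^*(a_0(t)-(a_1(t)+|\Omega|a_2(t))u^*)$ (as noted in Remark~1.4(1)), and $(u^*(t),u^*(t))$ solves \eqref{u-v-eq1}. Setting $W(t) = \frac{1}{2}\int_\Omega(u(x,t;t_0,u_0)-u^*(t))^2\,dx$ and differentiating along \eqref{u-v-eq1}, the Laplacian contributes $-\int|\nabla(u-u^*)|^2$, while the chemotactic divergence, after integration by parts, gives $\chi\int(u-u^*)\nabla(u-u^*)\cdot\nabla v = -\tfrac{\chi}{2}\int(u-u^*)^2\,\Delta v = -\tfrac{\chi}{2}\int(u-u^*)^2(v-u)$ via the elliptic equation. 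Expanding the logistic difference and using $v-u^* = v-v^*$ solves $-\Delta(v-u^*)+(v-u^*) = u-u^*$ (so $\|v-u^*\|_\infty$ is controlled by $\|u-u^*\|_\infty$), assumption \eqref{stability-cond-1-eq1} produces $W'(t)\le -\eta W(t)$ for some $\eta>0$. Parabolic regularity (bootstrapping off the global $C^0$-bound of Theorem~\ref{thm-002}(1)) upgrades $L^2$-decay to $C^0$-decay of $u-u^*$, and the elliptic equation then yields $\|v-u^*\|_{C^0}\to 0$.

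For part (2), I would first establish uniform two-sided bounds for every entire positive solution. Writing $\bar U(t)=\max_{x\in\bar\Omega}u^*(x,t)$, $\underline U(t)=\min_{x\in\bar\Omega}u^*(x,t)$ and similarly $\bar V,\underline V$, the maximum and minimum principles applied at interior extrema of $u^*$ and $v^*$ (together with the elliptic equation $\Delta v^*+u^*-v^*=0$) produce a coupled system of differential inequalities. Assumption \eqref{stability-cond-2-eq1} is exactly what forces the determinant $h(\chi)$ of the associated linear system to be positive, and the steady-state of that linear system gives the envelope $r_2\le u^*(x,t),v^*(x,t)\le r_1$ uniformly in $x$ and $t$. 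A parallel absorption argument shows that for any solution $(u,v)=(u(\cdot,\cdot;t_0,u_0),v(\cdot,\cdot;t_0,u_0))$ of \eqref{u-v-eq1}, the trajectory enters (and stays in) $[r_2,r_1]\times[r_2,r_1]$ after large time, so I may assume all four quantities lie in $[r_2,r_1]$.

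Now fix any entire positive solution $(u^*,v^*)$ and let $(u,v)$ be as above; set $W(t)=\tfrac{1}{2}\int_\Omega(u-u^*)^2\,dx$. Computing $\dot W(t)$: the chemotactic term splits as $\chi\int(u-u^*)\nabla(u-u^*)\cdot\nabla v + \chi\int u^*\nabla(u-u^*)\cdot\nabla(v-v^*)$; the first piece is handled as in part (1), and the second is split by Young's inequality as $\tfrac{1}{2}|\nabla(u-u^*)|^2+\tfrac{(\chi r_1)^2}{2}|\nabla(v-v^*)|^2$, where the $|\nabla(v-v^*)|^2$ integral is dominated by $\int(u-u^*)^2$ via testing the elliptic equation $-\Delta(v-v^*)+(v-v^*)=u-u^*$ against $v-v^*$. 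Expanding the logistic difference in the form $a_0(u-u^*)-a_1(u+u^*)(u-u^*)-a_2\big[(u-u^*)\int u^*+u\int(u-u^*)\big]$ and invoking $r_2\le u,u^*\le r_1$ yields exactly the constants appearing in $L_1(t)$ (from the $-a_1(u+u^*)\ge 2a_{1,\inf}r_2$ absorption) and $L_2(t)$ (from the $a_0$, cross-chemotaxis, and $a_2$ pieces). Thus $\dot W(t)\le(L_2(t)-L_1(t))\,W(t)$, and \eqref{stability-cond-2-eq2} with Gronwall forces $W(t)\to 0$; parabolic regularity upgrades to $C^0$-convergence of $u$, and the elliptic equation gives $C^0$-convergence of $v$. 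Uniqueness of $(u^*,v^*)$ is immediate by applying the same inequality to the difference of two entire positive solutions.

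The main technical obstacle will be pinning down the sharp coefficients in $L_1$ and $L_2$: one has to choose the Young-inequality constant so that the cross-chemotaxis term contributes exactly $\tfrac{(\chi r_1)^2}{2}$ after absorbing half of the diffusion, and carefully partition the expansion of the nonlocal logistic term using the envelope $[r_2,r_1]$ so that the $a_{2,\inf}$ and $a_{2,\sup}$ pieces land in the correct $L_1$ or $L_2$ summand (with the right $(\cdot)_+$/$(\cdot)_-$ splits). Finally, inheritance of periodicity or almost periodicity by the unique entire positive solution follows from uniqueness plus time-translation invariance of \eqref{u-v-eq1}: if $a_i(t+T,x)=a_i(t,x)$, then $(u^*(x,t+T),v^*(x,t+T))$ is another entire positive solution, hence equals $(u^*,v^*)$; in the almost periodic case, the uniform convergence in \eqref{global-stability-2-eq1} together with the Bochner/Amerio characterization of almost periodicity applied to the hull of the coefficients shows that $(u^*,v^*)$ is almost periodic in $t$ uniformly in $x$.
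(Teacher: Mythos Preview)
Your approach to part~(2) is essentially the paper's: first pull in the envelope bounds $r_2\le u,u^*\le r_1$ from Theorem~\ref{thm-005}, then derive a differential inequality for $\int_\Omega w^2$ and apply Gronwall with~\eqref{stability-cond-2-eq2}. The paper works with $\int w_+^2$ and $\int w_-^2$ separately and then adds, in order to track the $(a_{2,\inf})_\pm$ and $(a_{2,\sup})_+$ splits cleanly, but the strategy is the same; the uniqueness and periodicity/almost-periodicity arguments also match.

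For part~(1), however, the paper takes a different and much shorter route, with \emph{no} $L^2$ energy at all. The pointwise envelope $\underline u(t)\le u(x,t;t_0,u_0)\le\bar u(t)$ from the proof of Theorem~\ref{thm-002}(1) (with $(\bar u,\underline u)$ solving the ODE system~\eqref{ode0}, which in the spatially homogeneous case has $a_{i,\inf}(t)=a_{i,\sup}(t)=a_i(t)$) reduces everything to showing $\bar u(t)-\underline u(t)\to0$. Dividing the two equations of~\eqref{ode0} by $\bar u$ and $\underline u$ and subtracting gives
\[
\frac{d}{dt}\ln\frac{\bar u}{\underline u}=-\big(a_1(t)-|\Omega|\,|a_2(t)|-2\chi\big)(\bar u-\underline u),
\]
and~\eqref{stability-cond-1-eq1} makes the right side nonpositive; a mean-value step then yields exponential decay of $\ln(\bar u/\underline u)$. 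Since both $u$ and $u^*$ are squeezed between $\underline u$ and $\bar u$, $\|u-u^*\|_{C^0}\to0$ follows directly, with no regularity bootstrap.

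Your $L^2$ sketch for part~(1) also has a slip: after multiplying by $u-u^*$ and integrating by parts, the chemotactic term is $\chi\int_\Omega u\,\nabla(u-u^*)\cdot\nabla v$, not $\chi\int_\Omega(u-u^*)\nabla(u-u^*)\cdot\nabla v$. The missing piece $\chi u^*(t)\int_\Omega\nabla(u-u^*)\cdot\nabla v$ still has to be controlled (via Young, as you do in part~(2)), and closing the estimate with a \emph{constant} $\eta>0$ then requires uniform two-sided pointwise bounds on $u$ that you do not derive from~\eqref{stability-cond-1-eq1} alone. The paper's ODE-squeeze argument sidesteps all of this.
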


\begin{theorem}
\label{thm-005} Suppose that \eqref{stability-cond-2-eq1} holds and $r_1$ and $r_2$ are as in Theorem \ref{thm-004}(2). Then
\begin{itemize}
\item[(1)] For any $t_0\in\RR$,   $u_0\in C^0(\bar \Omega)$ with $u_0\ge 0$ and $u_0\not \equiv 0$, and $\epsilon>0,$  there exists $t_\epsilon$ such that
 \vspace{-0.05in}
 $$r_2-\epsilon \le u(x,t;t_0,u_0) \le r_1+\epsilon,\,\,\,r_2-\epsilon \le v(x,t;t_0,u_0) \le r_1+\epsilon
 \vspace{-0.05in}$$
 for all $x\in\bar\Omega$ and $t\ge t_0+t_\epsilon$.

\item[(2)] Moreover if the coefficients $a_i$ are periodic in $t$ with period   $\,T>0$ (resp. $a_i$ are almost periodic in $t$), then there are $T$-periodic functions  $m(t)$ and $M(t)$ (resp. almost periodic functions $m(t)$ and $M(t)$)  with
   \vspace{-0,05in} $$r_2\le \inf_{t\in\RR}m(t)\le m(t)\le M(t)\le \sup_{t\in\RR}M(t)\le r_1
   \vspace{-0,05in}$$
   such that for any $t_0\in\RR$,  $u_0\in C(\bar \Omega)$ with $u_0\ge 0$ and $u_0\not \equiv 0$, and $\epsilon>0$, there is $t_\epsilon>0$ such that
\vspace{-0.05in}$$
m(t)-\epsilon\le u(x,t;t_0,u_0)\le M(t)+\epsilon,\quad
m(t)-\epsilon\le v(x,t;t_0,u_0)\le M(t)+\epsilon\quad \forall \,
x\in\bar\Omega,\,\, t\ge t_0+t_\epsilon.
\vspace{-0,05in}$$

\end{itemize}
\end{theorem}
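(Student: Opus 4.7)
The plan is to reduce the system \eqref{u-v-eq1} to a pair of Dini-ODE comparisons for the spatial extrema of $u$ and then squeeze these extrema between two monotone iterative sequences converging to $r_1$ and $r_2$. Substituting $\Delta v = v - u$ (from the elliptic equation) into the chemotaxis term rewrites the first equation as
\begin{equation*}
u_t = \Delta u - \chi \nabla u\cdot \nabla v + u\Bigl[a_0(t,x) + (\chi - a_1(t,x))\, u - \chi v - a_2(t,x)\int_\Omega u\Bigr].
\end{equation*}
Set $U(t) := \max_{\bar\Omega} u(\cdot,t)$ and $L(t) := \min_{\bar\Omega} u(\cdot,t)$. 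At a spatial maximum, $\nabla u = 0$ and $\Delta u \le 0$; the elliptic maximum principle for $-\Delta v + v = u$ gives $L(t) \le v(x,t) \le U(t)$, hence $|\Omega| L(t) \le \int_\Omega u \le |\Omega| U(t)$. Splitting $a_2 = (a_2)_+ - (a_2)_-$ and exploiting the elementary bounds $(a_{2,\inf}(t))_+ \le a_{2,+}(t,x)$, $a_{2,-}(t,x) \le (a_{2,\inf}(t))_-$ (with their duals at the minimum), together with $-\chi v \le -(\chi)_+ L$ at a maximum and $-\chi v \ge -(\chi)_+ U$ at a minimum, I obtain, in the upper-Dini sense,
\begin{equation*}
D^+ U(t) \le U(t)\bigl[a_{0,\sup}(t) - \tilde A_2(t)\, U(t) - \tilde B_1(t)\, L(t)\bigr],
\end{equation*}
\begin{equation*}
D^+ L(t) \ge L(t)\bigl[a_{0,\inf}(t) - \tilde A_1(t)\, L(t) - \tilde B_2(t)\, U(t)\bigr],
\end{equation*}
where $\tilde A_2(t) = a_{1,\inf}(t) - |\Omega|(a_{2,\inf}(t))_- - (\chi)_+$, $\tilde B_1(t) = (\chi)_+ + |\Omega|(a_{2,\inf}(t))_+$, and $\tilde A_1(t), \tilde B_2(t)$ are the symmetric sup-versions. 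The Dini step is legitimized by Danskin's envelope lemma for the Lipschitz map $t \mapsto \max_x u(x,t)$.

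For part (1), Theorem \ref{thm-002}(1) already guarantees global boundedness, and the strong parabolic maximum principle ensures $L(t) > 0$ for $t > t_0$ whenever $u_0 \not\equiv 0$. Replacing the time-dependent coefficients by their $t$-sup or $t$-inf (so as to preserve the direction of the inequality), one gets constants $A_i, B_i$ with $D^+ U \le U[a_{0,\sup} - A_2 U - B_1 L]$ and $D^+ L \ge L[a_{0,\inf} - A_1 L - B_2 U]$. A first logistic comparison (just discarding the non-negative $B_1 L$ term) yields $\limsup U \le \bar U_0 := a_{0,\sup}/A_2$. Feeding this back into the lower inequality gives $\liminf L \ge \bar L_0 := (a_{0,\inf} - B_2 \bar U_0)/A_1$, which is strictly positive precisely because hypothesis \eqref{stability-cond-2-eq1}, after clearing denominators, is equivalent to $A_2\, a_{0,\inf} > B_2\, a_{0,\sup}$. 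Iterating
\begin{equation*}
\bar U_{n+1} = (a_{0,\sup} - B_1 \bar L_n)/A_2, \qquad \bar L_{n+1} = (a_{0,\inf} - B_2 \bar U_n)/A_1,
\end{equation*}
an easy induction yields $\bar U_n \searrow$ and $\bar L_n \nearrow$; the same hypothesis forces $A_1 A_2 > B_1 B_2$ (i.e.\ $h(\chi) > 0$), which is exactly the spectral-radius condition making this alternating linear recursion a contraction with ratio $B_1 B_2/(A_1 A_2)$. The two limits are identified by Cramer's rule applied to $A_2 \bar U + B_1 \bar L = a_{0,\sup}$, $B_2 \bar U + A_1 \bar L = a_{0,\inf}$ as $\bar U_n \to r_1$ and $\bar L_n \to r_2$. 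The corresponding bounds for $v$ follow from $L(t) \le v(x,t) \le U(t)$.

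For part (2), run the same iteration, but now in the ordered cone of continuous positive $T$-periodic (resp.\ almost periodic) functions. At each step the constant-coefficient logistic ODE is replaced by its nonautonomous analogue $y' = y[p(t) - q(t) y - s(t)]$ with periodic (resp.\ almost periodic) data, which admits a unique positive $T$-periodic (resp.\ almost periodic) solution attracting every positive trajectory --- this is the classical scalar-logistic theory of \cite{NMN00} and \cite{ShYi} already cited in the introduction. The resulting envelope sequences $\bar U_n(t), \bar L_n(t)$ inherit the monotonicity from part (1) and converge pointwise, in fact uniformly (since the above contraction is uniform in $t$), to functions $M(t), m(t)$. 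Periodicity is preserved by pointwise limits, and almost periodicity is preserved by uniform limits (closedness of AP in the sup norm), so $M(t), m(t)$ are periodic/almost periodic. The sandwich $r_2 \le m(t) \le M(t) \le r_1$ follows step by step from comparing each $\bar U_n(t), \bar L_n(t)$ with the constants produced in part (1).

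The main technical obstacle is extracting the two algebraic consequences of \eqref{stability-cond-2-eq1}: (i) $A_2\, a_{0,\inf} > B_2\, a_{0,\sup}$ (so that $\bar L_0 > 0$, needed to kick off the iteration), and (ii) $A_1 A_2 > B_1 B_2$ (so the iteration contracts and has a unique fixed point). Both reduce to direct manipulation of \eqref{stability-cond-2-eq1}, the first one being its sharp reformulation and the second following from the comparisons $A_1 \ge A_2$ and $B_2 \ge B_1$. A secondary, more routine point is justifying the Dini-derivative analysis of the merely Lipschitz functions $U(t), L(t)$, handled by Danskin's envelope lemma; and the passage to periodic/almost periodic envelopes in part (2) is a transfer that rests cleanly on the nonautonomous scalar logistic theory already in play throughout the paper.
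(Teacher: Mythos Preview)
Your argument is correct and arrives at the same comparison structure as the paper, but by a genuinely different route. The paper does not use Dini derivatives of the spatial extrema; instead it introduces the auxiliary ODE system \eqref{ode0} for $(\bar u(t),\underline u(t))$ --- which is exactly the equality version of your differential inequalities for $(U,L)$ --- and establishes the sandwich $\underline u(t)\le u(x,t;t_0,u_0)\le \bar u(t)$ via the $L^2$ energy method already carried out in the proof of Theorem~\ref{thm-002}(1) (testing against $(u-\bar u)_+$ and $(u-\underline u)_-$). For the asymptotics it then simply invokes Lemma~\ref{lem-00002}, the Ahmad/Hetzer--Shen result on nonautonomous Lotka--Volterra competition systems, to conclude that the ODE trajectories are attracted to $[r_2,r_1]$ in part~(1) and to the unique periodic/almost periodic solution $(M(t),m(t))$ of \eqref{ode0} in part~(2). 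Your monotone iteration is essentially a hand-written proof of the relevant special case of that lemma. What your approach buys is self-containment and a transparent identification of $(r_1,r_2)$ as the Cramer solution of the $2\times 2$ linear system; what the paper's approach buys is brevity (both ingredients are already in place by this point) and avoidance of the boundary-extremum technicality --- justifying $\Delta u\le 0$ at a Neumann boundary maximum is doable via a normal-direction Taylor expansion using $\partial_n u=0$, but the $L^2$ method sidesteps it entirely. One small point: the implication $A_1A_2>B_1B_2$ does not follow from $A_1\ge A_2$ and $B_2\ge B_1$ alone; you also need $A_2>B_2$, which is indeed a consequence of \eqref{stability-cond-2-eq1} since $A_2>\tfrac{a_{0,\sup}}{a_{0,\inf}}B_2\ge B_2$.
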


\begin{remark}
\begin{itemize}
\item[(1)] When $a_i$ $(i=0,1,2)$ are constants,   the condition \eqref{stability-cond-1-eq1}    becomes
\vspace{-0.05in}\begin{equation}
\label{stability-cond-1-eq2}
a_1-|\Omega|\abs{a_2}> 2(\chi)_+.
\vspace{-0.05in}\end{equation}
Theorem \ref{thm-004}(1) is then  an extension of \cite[Theorem 0.1 ] {NT13}  by Negreanu and Tello.  When the  nonlocal term is zero, the result in Theorem \ref{thm-004}(1)  is consistent with the result by Tello and Winkler in \cite{TW07}.

\vspace{-0.1in}\item[(2)] In Theorem \ref{thm-004}(2),
when $a_i$ $(i=0,1,2)$ are constants ,  we have $r_1=r_2=\frac{a_0}{a_1+|\Omega|a_2}$ and
\vspace{-0.05in}$$
L_1(t)=\frac{2a_0(a_1+|\Omega|(a_2)_+)}{a_1+|\Omega|a_2},\quad L_2(t)=a_0+\frac{\chi^2 a_0^2}{2(a_1+|\Omega|a_2)^2}+\frac{a_0|\Omega|[2(a_2)_-+(a_2)_+]}{a_1+|\Omega|a_2}.
\vspace{-0.05in}$$
Hence the condition \eqref{stability-cond-2-eq1} becomes \eqref{stability-cond-1-eq2} and
the condition  \eqref{stability-cond-2-eq2}  becomes
 \vspace{-0.05in}\begin{equation}
 \label{stability-cond-2-eq3}
  { \frac{\chi^2a_0}{2{(a_1+|\Omega|a_2)}}<  a_1-|\Omega|(a_2)_-.}
 \vspace{-0.05in} \end{equation}
 Furthermore when $\chi=0,$ the condition \eqref{stability-cond-2-eq2}   becomes
 \vspace{-0.05in}\begin{equation}
 \label{stability-cond-2-eq3}
a_1-{ |\Omega|(a_2)_-}>0.
  \vspace{-0.05in}\end{equation}

\vspace{-0.1in}\item[(3)] In Theorem \ref{thm-004}(2), if $a_i(t+T,x)=a_i(t,x)$ $(i=0,1,2)$, then \eqref{stability-cond-2-eq2} becomes
\vspace{-0.05in}$$
\int_{0}^{T}\big(L_2(t)-L_1(t)\big)dt<0.
\vspace{-0.05in}$$

\vspace{-0.1in}\item[(4)] It is seen from Theorem \ref{thm-003} that  \eqref{global-existence-cond-eq} ensures the existence of entire positive solutions of
\eqref{u-v-eq1}. In the case that $a_i(t,x)\equiv a_i(t)$ $(i=0,1,2)$, the condition \eqref{stability-cond-1-eq1} ensures the stability and uniqueness
of entire positive solutions of \eqref{u-v-eq1}. In the general case,
 Theorem \ref{thm-005} provides some positive attracting set for positive solutions of \eqref{u-v-eq1} under the  condition \eqref{stability-cond-2-eq1}.
 It remains open whether in the general case, the condition \eqref{stability-cond-2-eq1} also ensures the stability and uniqueness
of entire positive solutions of \eqref{u-v-eq1}.

\vspace{-0.1in}\item[(5)] The reader is referred to Definition \ref{almost-periodic-function-def} for the definition of almost periodic functions.
\end{itemize}
\end{remark}

The rest of the paper is organized as follows. In section 2, we collect some important results from literature that will be used in the proofs
of our main results. In section 3, we study the local existence of classical solutions of \eqref{u-v-eq1} with given initial functions and prove Theorem \ref{thm-001}. In section 4,
we  investigate the global existence of classical solutions of \eqref{u-v-eq1} with given initial functions and
prove Theorem \ref{thm-002}. We consider the existence of entire positive solutions of \eqref{u-v-eq1} and prove Theorem \ref{thm-003}
 in section 5. Finally, in section 6, we study the asymptotic behavior of global positive solutions and prove Theorems \ref{thm-004} and \ref{thm-005}.

\medskip

\noindent {\bf Acknowledgment.} The authors would like to thank Professors J. Ignacio Tello and Michael Winkler
 for valuable discussions, suggestions,  and references.

\section{Preliminaries}

In this section, we recall  some standard definitions and lemmas from semigroup theory.
We also present some  known results on non-autonomous logistic equations and  Lotka-Volterra competition systems.

\subsection{Semigroup theory}

In this subsection, we recall some standard definitions and lemmas from semigroup theory. The reader is referred to \cite{DH77}, \cite{A. Pazy} for the details.

Recall  that
for given $1\le p<\infty$,   $A=-\Delta+I$ with
  \vspace{-0.05in}$$D(A)=\big\{ u \in W^{2,p}(\Omega) \, |  \, \frac{\p u}{\p n}=0 \quad \text{on } \, \p \Omega \big\}
 \vspace{-0.05in} $$
  and $X^{\alpha}=D(A^{\alpha})$ equipped with the graph norm  $\|x\|_{\alpha}=\|A^{\alpha}x\|_{p}$.
Note that $X^0=L^p(\Omega)$.

\begin{lemma}(See \cite[Theorem 1.6.1]{DH77})
\label{lem-002}
Let $1\leq p<\infty.$ For  any $0\leq \alpha \leq 1,$ we have
\vspace{-0.05in}$$
X^{\alpha} \subset C^{\nu}(\bar{\Omega}) \,\, \text{when}\,\, 0\leq \nu < 2\alpha -\frac{n}{p},
\vspace{-0.05in}$$
 where the inclusion is continuous.
In particular when $\frac{n}{2p}<\alpha \leq 1,$  we get   $X^{\alpha} \subset C^0(\bar{\Omega}).$
\end{lemma}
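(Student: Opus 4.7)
The plan is to recognize this as a direct consequence of two well-known ingredients: (i) an identification of the fractional power domain $X^\alpha = D(A^\alpha)$ with a standard Sobolev-type space of order $2\alpha$, and (ii) the classical Sobolev embedding into H\"older spaces. Since $A = -\Delta + I$ with Neumann boundary conditions is a positive, self-adjoint, sectorial operator on $L^p(\Omega)$, its fractional powers $A^\alpha$ are well-defined for $0 \le \alpha \le 1$, and the scale $\{X^\alpha\}$ is a complex interpolation scale between $X^0 = L^p(\Omega)$ and $X^1 = D(A) \subset W^{2,p}(\Omega)$. The key embedding I would establish is
\begin{equation*}
X^\alpha \hookrightarrow H^{2\alpha,p}(\Omega),
\end{equation*}
where $H^{s,p}$ denotes the Bessel potential space, with continuous inclusion.

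First I would recall the general theory: for a positive sectorial operator on $L^p$, the domain of $A^\alpha$ can be characterized by complex interpolation as $[L^p(\Omega),D(A)]_\alpha$. Using the fact that $D(A) \hookrightarrow W^{2,p}(\Omega)$ continuously (by elliptic regularity for the Neumann Laplacian), one obtains
\begin{equation*}
X^\alpha = [L^p, D(A)]_\alpha \hookrightarrow [L^p, W^{2,p}]_\alpha = H^{2\alpha,p}(\Omega),
\end{equation*}
where the last equality is the standard identification of complex interpolation of $L^p$-Sobolev spaces on a smooth bounded domain.

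Next I would apply the Sobolev embedding theorem: for $s > n/p$, $H^{s,p}(\Omega) \hookrightarrow C^\nu(\bar\Omega)$ continuously whenever $0 \le \nu < s - n/p$. Taking $s = 2\alpha$ yields the first assertion. The second assertion, $X^\alpha \subset C^0(\bar\Omega)$ when $\alpha > n/(2p)$, follows by choosing $\nu = 0$ and noting that the condition $2\alpha - n/p > 0$ allows some valid $\nu \ge 0$.

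The main obstacle will be justifying the identification $X^\alpha = [L^p, D(A)]_\alpha \hookrightarrow H^{2\alpha,p}(\Omega)$ at the level of detail an outside reader expects. In particular, whether boundary conditions are ``seen'' by the interpolation depends on whether $2\alpha$ exceeds $1/p$: for $2\alpha < 1/p$ the trace of $\partial_n u$ is not defined, for $2\alpha > 1+1/p$ the Neumann trace persists under interpolation, and the intermediate regime requires the boundary condition to disappear from the characterization. However, for the direction we need, namely the \emph{inclusion} $X^\alpha \hookrightarrow H^{2\alpha,p}(\Omega)$ (not the reverse), these subtleties are irrelevant: the inclusion $D(A) \hookrightarrow W^{2,p}$ is continuous and the interpolation functor is monotone, so the argument goes through cleanly. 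Since this statement is exactly \cite[Theorem 1.6.1]{DH77}, in the paper I would simply cite Henry's text rather than reproduce the technical interpolation argument.
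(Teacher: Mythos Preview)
Your proposal is correct, and your final remark is exactly what the paper does: Lemma~\ref{lem-002} is stated with a citation to \cite[Theorem 1.6.1]{DH77} and no proof is given in the paper. Your sketch of the underlying argument (identifying $X^\alpha$ with an interpolation space embedding into $H^{2\alpha,p}(\Omega)$ and then invoking the Sobolev--H\"older embedding) is a faithful outline of how Henry establishes it, but for the purposes of this paper the bare citation suffices.
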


\begin{lemma}(See \cite[Lemma 2.1]{HW05})
\label{lem-003}
Let $\beta \geq 0$ and $p \in (1,\infty).$ Then for any $\epsilon >0$ there exists $C(\epsilon)>0$ such that for any $w \in  C^{\infty}_0(\Omega)$ we have
\begin{equation}
\label{001}
\|A^{\beta}e^{-tA}\nabla\cdot w\|_{L^p(\Omega)}  \leq C(\epsilon) t^{-\beta-\frac{1}{2}-\epsilon} e^{-\mu t} \|w\|_{L^p(\Omega)} \quad \text{for all}\,\,  t>0 \, \text{and  some } \, \mu>0.
\end{equation}
Accordingly, for all $t>0$ the operator $A^{\beta}e^{-tA}\nabla\cdot $ admits a unique extension to all of $L^p(\Omega)$ which is again denoted by $A^{\beta}e^{-tA}\nabla\cdot$  and  satisfies $\eqref{001}$ for all $w \in L^p(\Omega).$
\end{lemma}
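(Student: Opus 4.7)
The plan is to combine the standard moment estimates for the analytic semigroup $(e^{-tA})_{t\ge 0}$ with a duality argument that exploits the divergence structure of $\nabla\cdot$.

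First I would recall that, because $A=-\Delta+I$ with Neumann boundary data is sectorial on $L^{p}(\Omega)$ with spectrum contained in $\{\mathrm{Re}\,\lambda\ge \mu\}$ for some $\mu>0$, the usual Dunford contour representation yields the moment bounds
$$
\bigl\|A^{\gamma}e^{-tA}\bigr\|_{L^{p}(\Omega)\to L^{p}(\Omega)}\;\le\;C_{\gamma}\,t^{-\gamma}e^{-\mu t},\qquad \gamma\ge 0,\ t>0.
$$
Splitting $e^{-tA}=e^{-(t/2)A}e^{-(t/2)A}$, I would write, for $w\in C_{0}^{\infty}(\Omega)$,
$$
A^{\beta}e^{-tA}\nabla\cdot w\;=\;\Bigl(A^{\beta+\tfrac{1}{2}+\epsilon}e^{-\tfrac{t}{2}A}\Bigr)\,\Bigl(A^{-\tfrac{1}{2}-\epsilon}e^{-\tfrac{t}{2}A}\nabla\cdot w\Bigr).
$$
The first bracket has operator norm at most $C\,t^{-\beta-\tfrac{1}{2}-\epsilon}e^{-\mu t/2}$, so the entire estimate reduces to showing that, uniformly in $\tau\ge 0$,
$$
\bigl\|A^{-\tfrac{1}{2}-\epsilon}e^{-\tau A}\nabla\cdot w\bigr\|_{L^{p}(\Omega)}\;\le\;C\,e^{-\mu\tau}\,\|w\|_{L^{p}(\Omega)}.
$$

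I would establish this reduced bound by duality. For $\varphi\in L^{p'}(\Omega)$, let $A_{*}$ denote the realisation of $-\Delta+I$ on $L^{p'}(\Omega)$ with Neumann boundary condition (the $L^{p}$--$L^{p'}$ dual of $A$), and put $\psi:=e^{-\tau A_{*}}A_{*}^{-\tfrac{1}{2}-\epsilon}\varphi$. Because $w$ has compact support in $\Omega$, the boundary term in integration by parts vanishes, hence
$$
\bigl\langle A^{-\tfrac{1}{2}-\epsilon}e^{-\tau A}\nabla\cdot w,\,\varphi\bigr\rangle\;=\;\int_{\Omega}(\nabla\cdot w)\,\psi\,dx\;=\;-\int_{\Omega} w\cdot\nabla\psi\,dx,
$$
which by H\"older's inequality is bounded by $\|w\|_{L^{p}}\|\nabla\psi\|_{L^{p'}}$. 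Using the continuous embedding $D(A_{*}^{\tfrac{1}{2}+\epsilon})\hookrightarrow W^{1,p'}(\Omega)$, obtained by complex interpolation between $L^{p'}$ and $D(A_{*})\subset W^{2,p'}$, together with commutativity of $e^{-\tau A_{*}}$ with fractional powers of $A_{*}$,
$$
\|\nabla\psi\|_{L^{p'}}\;\le\;C\,\|A_{*}^{\tfrac{1}{2}+\epsilon}\psi\|_{L^{p'}}\;=\;C\,\|e^{-\tau A_{*}}\varphi\|_{L^{p'}}\;\le\;C\,e^{-\mu\tau}\|\varphi\|_{L^{p'}}.
$$
Taking the supremum over $\|\varphi\|_{L^{p'}}=1$ yields the reduced bound, and combining with the smoothing factor proves the claimed inequality on $C_{0}^{\infty}(\Omega)$.

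Finally, since $C_{0}^{\infty}(\Omega)$ is dense in $L^{p}(\Omega)$ and the derived bound is uniform in $w$ for each fixed $t>0$, the operator $A^{\beta}e^{-tA}\nabla\cdot$ admits a unique continuous extension to all of $L^{p}(\Omega)$ satisfying the same inequality, which is the second assertion. The main obstacle in this program is justifying the embedding $D(A_{*}^{\tfrac{1}{2}+\epsilon})\hookrightarrow W^{1,p'}(\Omega)$: one must check that the Neumann boundary condition is compatible with interpolation at the level $2(\tfrac{1}{2}+\epsilon)=1+2\epsilon$, which for $\epsilon$ small lies below $1+\tfrac{1}{p'}$ and therefore does not impose a trace condition on the intermediate space—this compatibility is the delicate point in the $L^{p}$-theory of fractional powers of elliptic operators and is the reason the small loss $\epsilon$ appears in the exponent of $t$.
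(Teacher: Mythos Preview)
The paper does not supply its own proof of this lemma; it simply cites \cite[Lemma~2.1]{HW05}. Your argument is correct and is in fact the standard route to this estimate---and essentially the one in the cited reference: reduce via the semigroup splitting and moment bound $\|A^{\gamma}e^{-tA}\|\le C t^{-\gamma}e^{-\mu t}$ to controlling $A^{-\frac12-\epsilon}e^{-\tau A}\nabla\cdot$, then handle the latter by $L^{p}$--$L^{p'}$ duality, transferring the divergence onto $\nabla\psi$ and invoking $D(A_{*}^{\frac12+\epsilon})\hookrightarrow W^{1,p'}(\Omega)$. You correctly isolate the only genuinely nontrivial step, namely that for small $\epsilon$ the interpolation exponent $1+2\epsilon$ lies below the Neumann trace threshold $1+1/p'$, so the intermediate space coincides with the Bessel potential space $H^{1+2\epsilon}_{p'}(\Omega)$ and carries no boundary condition. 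The density-and-extension conclusion is routine.
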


Consider
\begin{equation}
\label{new-evolution-eq1}
\begin{cases}
u_t+A u=F(t,u) , \, t>t_0\cr
u(t_0)=u_0.
\end{cases}
\end{equation}
We assume that $F$ maps some open set $U$ of $\RR\times X^\alpha$ into $X^0$ for some $0\le \alpha<1$, and $F$ is locally H\"older continuous in $t$ and
locally Lipschitz continuous in $u$ for $(t,u)\in U$.

\begin{definition} [Mild solution]
\label{mild-solution-def}
For given $u_0\in X^\alpha$.
 A continuous function $u: [t_0, t_1) \to X^0$ is called a {\rm mild solution} of $\eqref{lem-003}$ on $t_0< t<t_1$  if $u(t)\in X^\alpha$ for
  $t\in [t_0,t_1)$ and the following integral equation holds on $t_0< t<t_1$,
\begin{equation}
\label{mild-solution}
u(t)=e^{-A( t-t_0)}u_0 +\int_{t_0}^te^{-A( t-s)} F(s,u(s))ds.
\end{equation}
\end{definition}

\begin{definition}[Strong solution] (see \cite[Definition 3.3.1]{DH77})
\label{strong-solution-def}
A {\rm  strong solution}  of the Cauchy problem $\eqref{lem-003}$ on $(t_0,t_1)$ is a continuous function  $u: [t_0, t_1) \to X^0$ such that $u(t_0)=u_0$,  $u(t) \in D(A)$ for $t\in (t_0,t_1)$,  $\frac{d u}{d t}$ exists for $t\in (t_0,t_1)$, $(t_0,t_1)\ni t \to F(t,u(t))\in X^0$ is locally H\"older continuous, and $\int^{t_0+\sigma}_{t_0} \|F(t,u(t))\|dt < \infty$ for some $\sigma>0,$ and the differential equation $u_t+A u=F(t,u) $ is satisfied on $(t_0,t_1).$
\end{definition}

\begin{lemma}[Existence of mild/strong solutions]
\label{mild-strong-solution-lm}
\begin{itemize}
\item[(1)] For any $(t_0,u_0) \in U$ there exists $T_{\max}=T_{\max}(t_0,u_0)>0$  such that $\eqref{lem-003}$ has a unique strong solution
 $u(t;t_0,u_0)$ on $(t_0,t_0+T_{\max})$ with initial value $u(t_0;t_0,u_0)=u_0.$ Moreover, $u(\cdot;t_0,u_0)\in C([t_0,t_0+T_{\max}),X^\alpha)$ and
    if $T_{\max}< \infty,$ then
$$\limsup_{t \nearrow T_{\max}}  \left\|u(t+t_0;t_0,u_0) \right\|_{{X^{\alpha}}}  =\infty.$$

\item[(2)] For given $(t_0,u_0)\in U$, if  $u(t)$ is a strong  solution of  $\eqref{lem-003}$ on $(t_0,t_1),$ then  $u$ satisfy the integral equation \eqref{mild-solution}. Conversely, if $u(t)$ is continuous function from $(t_0,t_1)$ into $ X^\alpha,$  $\int^{t_0+\sigma}_{t_0} \|F(t,u(t))\|dt < \infty$ for some $\sigma>0,$ and if the integral equation \eqref{mild-solution} holds for $t_0< t<t_1,$ then $u(t)$ is a strong solution of the differential equation  $\eqref{lem-003}$ on $(t_0,t_1).$ Furthermore,
    \vspace{-0.05in}
    $$u\in C^\delta((t_0,t_1), X^\alpha)\,\,\text{for all}\,\, \delta\,\,\text{such that}\,\, 0<\delta<1-\alpha
    \vspace{-0.05in}$$
\end{itemize}
\end{lemma}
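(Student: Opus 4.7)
The statement is the standard local existence / mild-strong equivalence theorem for abstract semilinear evolution equations driven by a sectorial operator, so the plan is to follow the Henry-type argument in \cite{DH77} and check that the hypotheses on $F$ (locally H\"older in $t$, locally Lipschitz in $u$) suffice.

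First I would prove (1) by a Banach fixed-point argument applied to the integral equation \eqref{mild-solution}. Fix $(t_0,u_0)\in U$ and pick $R>0$, $\tau>0$ so small that $[t_0,t_0+\tau]\times\overline{B_R(u_0)}\subset U$, $F$ is bounded by some $M$ there and Lipschitz (in the second variable) with constant $L$. On the complete metric space
\[
\mathcal{Y}_\tau=\{u\in C([t_0,t_0+\tau],X^\alpha):\sup_t\|u(t)-u_0\|_\alpha\le R\}
\]
define $(\Phi u)(t)=e^{-A(t-t_0)}u_0+\int_{t_0}^t e^{-A(t-s)}F(s,u(s))\,ds$. Using the standard estimate $\|A^\alpha e^{-A\sigma}\|_{X\to X}\le C_\alpha\sigma^{-\alpha}e^{-\mu\sigma}$ together with $\|(e^{-A\sigma}-I)u_0\|_\alpha\to 0$ as $\sigma\to 0^+$ (which holds because $u_0\in X^\alpha$), one checks that $\Phi$ maps $\mathcal{Y}_\tau$ into itself and is a strict contraction once $\tau$ is small enough, since $\int_0^\tau\sigma^{-\alpha}d\sigma\to 0$. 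This yields a unique fixed point $u(\cdot;t_0,u_0)\in\mathcal{Y}_\tau\subset C([t_0,t_0+\tau],X^\alpha)$. Defining $T_{\max}$ as the supremum of all such $\tau$, a standard continuation argument handles the blow-up alternative: if $T_{\max}<\infty$ and $\limsup_{t\nearrow T_{\max}}\|u(t+t_0;t_0,u_0)\|_{X^\alpha}<\infty$, one extracts a subsequence converging in $X^\alpha$ (using that orbits lie in a relatively compact subset of $X^\beta$ for some $\beta<\alpha$), picks $t_1$ close to $T_{\max}$ with $(t_1,u(t_1))\in U$, and restarts the local existence at $t_1$, contradicting maximality.

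For (2), the forward implication (strong $\Rightarrow$ mild) is the variation-of-constants formula: for a strong solution $u$, the map $s\mapsto e^{-A(t-s)}u(s)$ is differentiable on $(t_0,t)$ with derivative $e^{-A(t-s)}F(s,u(s))$, and integrating from $t_0$ to $t$ (justified since $\int_{t_0}^{t_0+\sigma}\|F(\cdot,u)\|\,ds<\infty$) gives \eqref{mild-solution}. The converse is the delicate half, and is where I would perform a bootstrap. Starting from the fixed point $u\in C([t_0,t_0+\tau],X^\alpha)$, I would first show that $u\in C^\delta((t_0,t_0+\tau),X^\alpha)$ for any $\delta<1-\alpha$. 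For this, split
\[
u(t+h)-u(t)=(e^{-Ah}-I)u(t)+\int_{t}^{t+h}e^{-A(t+h-s)}F(s,u(s))\,ds,
\]
estimate the first term using $\|(e^{-Ah}-I)A^{-\gamma}\|\le C h^\gamma$ applied to $A^\gamma u(t)\in X^{\alpha-\gamma}$ (admissible for $\gamma<1-\alpha$ because $u(t)\in X^\alpha$ and one gains $\gamma$ derivatives via $A^{-\gamma}$), and the second by the boundedness of $F$ together with the singular estimate on $A^\alpha e^{-A\sigma}$. This H\"older regularity of $u$, combined with the assumed H\"older dependence of $F(\cdot,\cdot)$ on $t$ and its Lipschitz dependence on $u$, makes $s\mapsto F(s,u(s))$ locally H\"older on $(t_0,t_0+\tau)$. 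The standard result (\cite[Lemma 3.2.1]{DH77}) then shows that the convolution $\int_{t_0}^t e^{-A(t-s)}F(s,u(s))\,ds$ takes values in $D(A)$, is differentiable, and satisfies $u_t+Au=F(t,u)$ classically; hence $u$ is a strong solution.

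The principal obstacle is precisely this last bootstrap. Merely knowing $u\in C([t_0,t_0+\tau],X^\alpha)$ is not enough to differentiate the convolution: one genuinely needs the H\"older continuity of $s\mapsto F(s,u(s))$, and this relies on first upgrading the continuity of $u$ to H\"older continuity with exponent $\delta<1-\alpha$. Once that is in hand, all remaining claims of (2), including $u\in C^\delta((t_0,t_1),X^\alpha)$, follow by the same estimates. The rest of the argument (uniqueness via Gronwall applied to two mild solutions, continuity in $t$) is routine.
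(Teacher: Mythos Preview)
Your proposal is correct and follows exactly the Henry framework that the paper invokes: the paper's own proof is nothing more than citations to \cite[Theorems 3.3.3, 3.3.4 and Lemma 3.3.2]{DH77}, and you have sketched precisely those arguments. One small imprecision worth tightening: in the H\"older bootstrap for (2), the estimate $\|(e^{-Ah}-I)u(t)\|_\alpha\le Ch^\delta$ requires $u(t)\in X^{\alpha+\delta}$ for $t>t_0$ (not merely $u(t)\in X^\alpha$), which follows from the smoothing of the analytic semigroup applied to the mild formula but should be stated; similarly, the blow-up alternative is cleaner via a uniform lower bound on the local existence time over bounded sets in $X^\alpha$ rather than via compactness.
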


\begin{proof}
(1)  It  follows from \cite[Theorem 3.3.3 ]{DH77} and  \cite[Theorem 3.3.4 ]{DH77}.

(2) The equivalence part follows from \cite[Lemma 3.3.2]{DH77} and $u \in C^\delta((t_0,t_1);X^\alpha)$ follows from the proof of \cite[Lemma 3.3.2]{DH77}.
\end{proof}

\subsection{Nonautonomous logistic equations and Lotka-Volterra competition systems}

In this subsection, we  first recall the definition
of almost periodic functions and some basic properties of almost periodic functions. We then review some known results for nonautonomous logistic equations and Lotka-Volterra competition systems.

\begin{definition}
\label{almost-periodic-function-def}
\begin{itemize}
\vspace{-0.05in}\item[(1)] A continuous function $f: \mathbb{R} \to \mathbb{C}$ is Bohr almost periodic if for any $\epsilon>0,$ the set of $\epsilon$-periods $\{\tau \,|\, |f(t+\tau)-f(t)|<\epsilon \}$ is relatively dense in $\mathbb{R},$ i.e, there exists an $l=l(\epsilon)$ such that every interval of the form $[t,t+l]$ intersects the set of $\epsilon$-periods.

\vspace{-0.1in}\item[(2)] Let $g(t,x)$ be a continuous function of $(t,x)\in\RR\times \bar\Omega$. $g$ is said to be {\rm almost periodic in $t$ uniformly with respect to $x\in \bar\Omega $}  if
$g$ is uniformly continuous in $t\in\RR$ and  $x\in\bar\Omega$,  and for each $x\in\bar\Omega $, $g(t,x)$ is almost periodic in $t$.
\end{itemize}
\end{definition}

\begin{lemma}
\label{almost-periodic-function-lm}
 Let $g(t,x)$ be a continuous function of $(t,x)\in\RR\times\bar\Omega$. $g$ is almost periodic in $t$ uniformly with respect to
$x\in\bar\Omega$  if and only if
 $g$ is uniformly continuous in $t\in\RR$ and $x\in\bar\Omega$,  and for any sequences $\{\beta_n^{'}\}$,
$\{\gamma_n^{'}\}\subset \RR$, there are subsequences $\{\beta_n\}\subset\{\beta_n^{'}\}$, $\{\gamma_n\}\subset\{\gamma_n^{'}\}$
such that
\vspace{-0.05in}$$
\lim_{n\to\infty}\lim_{m\to\infty}g(t+\beta_n+\gamma_m,x)=\lim_{n\to\infty}g(t+\beta_n+\gamma_n,x)\quad \forall\,\, (t,x)\in\RR\times\bar\Omega.
\vspace{-0.05in}$$
\end{lemma}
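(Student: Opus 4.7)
The plan is to recognize Lemma~\ref{almost-periodic-function-lm} as the Bochner normality (double-sequence) criterion for functions almost periodic in $t\in\RR$ uniformly with respect to $x\in\bar\Omega$. Both directions rest on the classical equivalence that $g$ is uniformly almost periodic if and only if the translate family $\mathcal F:=\{g(\cdot+\tau,\cdot):\tau\in\RR\}$ is relatively compact, in the supremum norm, inside the space of bounded uniformly continuous functions on $\RR\times\bar\Omega$.

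For the forward implication I would exploit precompactness of $\mathcal F$. Given $\{\beta_n'\}$ and $\{\gamma_n'\}$, a diagonal extraction yields subsequences $\{\beta_n\}\subset\{\beta_n'\}$ and $\{\gamma_n\}\subset\{\gamma_n'\}$ along which simultaneously
\[
g(\cdot+\beta_n,\cdot)\to h,\qquad g(\cdot+\gamma_n,\cdot)\to k,\qquad k(\cdot+\beta_n,\cdot)\to K
\]
uniformly on $\RR\times\bar\Omega$; the third extraction is legitimate because $k$, being a uniform limit of translates of $g$, is itself uniformly almost periodic and hence has precompact translate family. For each fixed $n$, uniform convergence $g(\cdot+\gamma_m,\cdot)\to k$ evaluated at $(t+\beta_n,x)$ gives $\lim_{m\to\infty} g(t+\beta_n+\gamma_m,x)=k(t+\beta_n,x)$, and then letting $n\to\infty$ yields $\lim_n\lim_m g(t+\beta_n+\gamma_m,x)=K(t,x)$. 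The diagonal satisfies $|g(t+\beta_n+\gamma_n,x)-k(t+\beta_n,x)|\le\|g(\cdot+\gamma_n,\cdot)-k\|_\infty\to 0$, so $\lim_n g(t+\beta_n+\gamma_n,x)=\lim_n k(t+\beta_n,x)=K(t,x)$, matching the iterated limit.

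For the reverse direction I would argue by contradiction. Assume $g$ is uniformly continuous on $\RR\times\bar\Omega$ and enjoys the double-sequence property but is not uniformly almost periodic; then $\mathcal F$ is not relatively compact in the sup norm, and a standard argument produces $\epsilon>0$ and a sequence $\{\tau_n\}$ with $\|g(\cdot+\tau_n,\cdot)-g(\cdot+\tau_m,\cdot)\|_\infty\ge\epsilon$ for all $n\ne m$. Applying the hypothesis with $\beta_n'=\tau_n$ and $\gamma_n'=-\tau_n$, and arranging the extracted subsequences to share the same index selector (so that the diagonal $g(t+\tau_{i_n}-\tau_{i_n},x)=g(t,x)$ is constant in $n$), forces the iterated limit to equal $g(t,x)$ pointwise as well. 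Combined with uniform continuity, this pointwise convergence is then promoted, via a countable dense subset of $\RR\times\bar\Omega$ together with an equicontinuity argument, to a supremum-norm statement that contradicts the $\epsilon$-separation of the translates. The main obstacle is precisely this last step: converting the pointwise information supplied by the double-sequence hypothesis into a sup-norm statement strong enough to contradict functional separation of the translates. Uniform continuity of $g$ will be the essential leverage, since it promotes controlled pointwise convergence to uniform convergence on bounded sets and, via translation invariance, to uniform convergence on all of $\RR\times\bar\Omega$.
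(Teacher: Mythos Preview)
The paper does not give its own argument here; it simply cites \cite[Theorems 1.17 and 2.10]{Fink}. Your proposal is thus more ambitious than what the paper supplies.

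Your forward direction is correct and is the standard Bochner--normality argument.

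Your reverse direction, however, has a genuine gap---one you yourself flag but do not close. The $\epsilon$-separation $\|g(\cdot+\tau_n,\cdot)-g(\cdot+\tau_m,\cdot)\|_\infty\ge\epsilon$ is a \emph{sup-norm} statement, while the double-sequence hypothesis delivers only \emph{pointwise} equalities $\lim_n\lim_m g(t+\tau_{i_n}-\tau_{i_m},x)=g(t,x)$. Uniform continuity of $g$ gives equicontinuity of the translate family, and Arzel\`a--Ascoli then upgrades pointwise convergence to uniform convergence on \emph{compact} subsets of $\RR\times\bar\Omega$; but $\RR$ is not compact, so you do not get sup-norm convergence, and the $\epsilon$-separation is not contradicted. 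The ``translation invariance'' you invoke does not rescue this: knowing $g(t+\tau_{i_n}-\tau_{i_m},x)\to g(t,x)$ locally uniformly imposes no control at $|t|\to\infty$ unless you already know the limit is almost periodic---which is what you are trying to prove. The quantifiers are in the wrong order: for each $(t,x)$ you can find $n\ne m$ making the difference small, but you need a single pair $n\ne m$ that works for all $(t,x)$.

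The standard proof of the reverse implication (as in Fink) does not proceed by this contradiction route; it establishes Bochner normality directly, applying the double-sequence hypothesis to carefully chosen pairs of sequences to force uniform (not merely pointwise) Cauchy behaviour of translates. If you want a self-contained proof rather than a citation, that is the line to pursue.
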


\begin{proof}
See \cite[Theorems 1.17 and 2.10]{Fink}.
\end{proof}

Consider the following nonautonomous  logistic equations
\vspace{-0.05in}\begin{equation}
\label{ode-000}
\frac{du}{dt}=u(a(t)-b(t) u),
\vspace{-0.05in}\end{equation}
where $a(t)$ and $b(t)$ are continuous functions. For given $u_0\in\RR$, let $u(t;t_0,u_0)$ be the solution of
\eqref{ode-000} with $u(t_0;t_0,u_0)=u_0$.


\begin{lemma}(see \cite{NMN00},Theorems 2.1, 3.1 and 4.1)
\label{lem-00001} Suppose that $a(t)$ and $b(t)$ are continuous and satisfy  that
$0<\inf_{t\in\RR}a(t)\le \sup _{t\in\RR} a(t)<\infty, \quad 0<\inf_{t\in\RR}b(t)\le \sup _{t\in\RR} b(t)<\infty.$
Then
\begin{itemize}
\vspace{-0.1in}\item[(1)]The non-autonomous equation \eqref{ode-000} has exactly one bounded entire solution $u^*(t)$ that is positive and satisfies $$\frac{\inf_{t\in\RR}a(t)}{\sup_{t\in\RR} b(t)}\leq u^*(t) \leq \frac{\sup_{t\in\RR}a(t)}{\inf_{t\in\RR} b(t)}\quad \forall\,\, t\in\RR.$$

\vspace{-0.05in}\item[(2)] $u^*(\cdot)$ is an attractor for all positive solutions of \eqref{ode-000}, that is, for any $u_0>0$ and $t_0\in\RR$,
\vspace{-0.05in} $$\lim_{t \to \infty}\|u(t+t_0;t_0,u_0)-u^*(t+t_0)\|=0.
\vspace{-0.05in}$$

\vspace{-0.1in}\item[(3)]If furthermore $a(t)$ and $b(t)$ are periodic with period $T$ (resp. almost periodic), $u^*(t)$ is also periodic with period $T$
 (resp. almost periodic).
\end{itemize}
\end{lemma}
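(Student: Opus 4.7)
The plan is to reduce the logistic equation \eqref{ode-000} to a linear scalar ODE by the change of variables $y=1/u$, which gives
\begin{equation*}
y'(t)=-a(t)\,y(t)+b(t),
\end{equation*}
valid on any interval where $u$ is positive. Since $\inf a(t)>0$, this linear equation has a unique bounded entire solution, obtainable by variation of constants with initial time pushed to $-\infty$:
\begin{equation*}
y^*(t)=\int_{-\infty}^{t}\exp\!\Bigl(-\int_{s}^{t}a(\tau)\,d\tau\Bigr)b(s)\,ds.
\end{equation*}
Convergence of the integral follows from $\inf a>0$ and boundedness of $b$, and the two-sided estimate $\inf b/\sup a\le y^*(t)\le \sup b/\inf a$ follows directly by bounding $b(s)$ and $a(\tau)$ inside the integral. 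Taking reciprocals yields the desired bounds in (1), and $u^*(t):=1/y^*(t)$ is the required bounded entire positive solution. Uniqueness follows because any other bounded entire positive solution gives, via $y=1/u$, another bounded entire solution of the linear equation; the difference solves $z'=-a(t)z$, whence $z\equiv 0$ by the exponential decay of $\exp(-\int_s^t a)$ and boundedness.

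For part (2), given $u_0>0$ and $t_0\in\RR$, let $y(t)=1/u(t;t_0,u_0)$, which satisfies $y(t_0)=1/u_0$ and the linear equation above. Splitting
\begin{equation*}
y^*(t)=\int_{-\infty}^{t_0}\exp\!\Bigl(-\int_{s}^{t}a\,d\tau\Bigr)b(s)\,ds+\int_{t_0}^{t}\exp\!\Bigl(-\int_{s}^{t}a\,d\tau\Bigr)b(s)\,ds
\end{equation*}
and comparing with the variation of constants formula for $y(t)$, I obtain
\begin{equation*}
y(t)-y^*(t)=\bigl(y(t_0)-y^*(t_0)\bigr)\exp\!\Bigl(-\int_{t_0}^{t}a(\tau)\,d\tau\Bigr),
\end{equation*}
which tends to $0$ exponentially since $\int_{t_0}^t a\ge \alpha(t-t_0)$ with $\alpha=\inf a>0$. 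Because $y^*$ is bounded away from $0$, the convergence $y(t)\to y^*(t)$ transfers to $u(t;t_0,u_0)\to u^*(t)$ in $|\cdot|$.

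For part (3), if $a(t+T)=a(t)$ and $b(t+T)=b(t)$, then $u^*(\cdot+T)$ is again a bounded entire positive solution of \eqref{ode-000}; by the uniqueness proved in (1), $u^*(t+T)=u^*(t)$. For the almost periodic case, I would first verify that $y^*$ is almost periodic, which follows from the convolution representation and the standard fact that the convolution of a uniformly continuous bounded function ($b$) with an exponentially decaying $L^1$ kernel preserves Bohr almost periodicity; alternatively, by Lemma~\ref{almost-periodic-function-lm} (Bochner's criterion), almost periodicity of $u^*$ reduces to showing that any double-limit procedure on translates $u^*(\cdot+\beta_n+\gamma_m)$ and $u^*(\cdot+\beta_n+\gamma_n)$ yields the same limit, which follows from the uniqueness in (1) applied to the limit equation obtained from the corresponding translates of $a$ and $b$. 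Almost periodicity of $u^*=1/y^*$ then follows since $y^*$ is bounded away from $0$.

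The main obstacle is part (3) in the almost periodic case: the clean $y=1/u$ reduction gives a linear ODE and the explicit formula makes part (3) almost mechanical, but one must be careful to verify that the almost periodicity is inherited by the convolution and then by the reciprocal. This is precisely where the hull/Bochner characterization (Lemma~\ref{almost-periodic-function-lm}) together with the uniqueness assertion in (1) does the work cleanly.
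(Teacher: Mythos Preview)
Your proof is correct. The paper does not actually prove this lemma; it merely cites \cite{NMN00}, Theorems 2.1, 3.1 and 4.1, so there is no ``paper's proof'' to compare against beyond the reference itself. Your reduction via $y=1/u$ to the linear equation $y'=-a(t)y+b(t)$ is in fact the standard route (and is essentially what is done in \cite{NMN00}): the explicit formula $y^*(t)=\int_{-\infty}^t\exp(-\int_s^t a)\,b(s)\,ds$ immediately gives existence, the two-sided bounds, and the exponential contraction in (2), while uniqueness follows since any bounded entire solution of $z'=-a(t)z$ must vanish. Note that for the uniqueness step you implicitly use that a competing bounded entire \emph{positive} solution is bounded away from $0$ (so that $1/u$ is bounded); this is consistent with the paper's convention that ``positive'' means $\inf u>0$, but would fail under the weaker reading $u(t)>0$ (already in the autonomous case there are entire solutions with $u(t)\to 0$ as $t\to-\infty$).

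For part (3), your periodic argument via uniqueness is exactly right. In the almost periodic case, the ``convolution'' remark is a bit loose since the kernel $\exp(-\int_s^t a)$ is not translation-invariant; your second route via Lemma~\ref{almost-periodic-function-lm} and uniqueness of the bounded entire positive solution for each limit equation is the clean way to finish, and mirrors precisely how the paper handles almost periodicity later in the proof of Theorem~\ref{thm-004}.
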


Consider now the following nonautonomous  Lotka-Volterra competition systems
\vspace{-0.05in}\begin{equation}
\label{ode-001}
\begin{cases}
\frac{du}{dt}=u(a_1(t)-b_1(t)u-c_1(t) v) \\
\frac{d v}{dt}=v(a_2(t)-b_2(t)u-c_2(t)v),
\end{cases}
\vspace{-0.05in}\end{equation}
where $a_i(t)$, $b_i(t)$,  and $c_i(t)$ $(i=1,2)$  are continuous and bounded above and below by positive constants.

Given a function $f(t),$ which is bounded above and below by  positive constants,  we let
 \vspace{-0.05in}$$f^L=\inf_{t\in\RR}f(t)\quad {\rm and}\quad
 f^M=\sup_{t\in\RR}f(t).\vspace{-0.05in}$$

\begin{lemma}
\label{lem-00002} Suppose that  $a_1^L>\frac{c_1^M a_2^M}{c_2^L}$ and   $a_2^L>\frac{a_1^M b_2^M}{b_1^L}.$
\begin{itemize}
\vspace{-0.05in}\item[(1)] Suppose that $(u_1(t),v_1(t))$ and $(u_2(t),v_2(t))$ are  two solutions of the system  \eqref{ode-001} with $u_k(t_0)>0$, $v_k(t_0)>0$
     $(k=1,2)$.  Then $u_1(t)-u_2(t) \to 0$ and  $v_1(t)-v_2(t) \to 0$ as $t \to \infty.$

\vspace{-0.1in}\item[(2)] For any $t_0\in\RR$, there exists a solution $(u_0(t),v_0(t))$ of  system  \eqref{ode-001} for $t\ge t_0$ such that
\vspace{-0.05in}$$0<\frac{a_1^L c_2^L-c_1^M a_2^M}{b_1^M c_2^L-c_1^Mb_2^L} \le u_0(t) \leq  \frac{a_1^M c_2^M-c_1^L a_2^L}{b_1^Lc_2^M-c_1^Lb_2^M}\quad\forall\,\, t\ge t_0,
\vspace{-0.05in}$$
$$
0< \frac{ b_1^La_2^L-a_1^M b_2^M}{b_1^Lc_2^M-c_1^L b_2^M} \le v_0(t) \leq  \frac{ b_1^M a_2^M-a_1^L b_2^L}{b_1^M c_2^L-c_1^M b_2^L}\quad \forall\,\, t\ge t_0.
\vspace{-0.05in}$$

\vspace{-0.1in}\item[(3)] If moreover the coefficients are positive and $T$-periodic,  then there exist exactly one $T$-periodic solution of the system  \eqref{ode-001} with positive components, which attracts all solutions that begin in the open first quadrant.

\vspace{-0.1in}\item[(4)]      If moreover the coefficients are positive and almost periodic,  then there exist exactly one almost periodic solution of the system  \eqref{ode-001} with positive components, which attracts all solutions that begin in the open first quadrant.
\end{itemize}
\end{lemma}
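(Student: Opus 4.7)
The plan is to treat this as a classical Ahmad--Lazer style persistence-and-uniqueness statement for nonautonomous competitive Lotka--Volterra systems: I would first establish (2) (it supplies a positive compact absorbing set), then (1) by a weighted logarithmic Lyapunov function, and finally (3)--(4) by compactness/hull arguments built on (1) and (2).

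For (2), I would iterate comparison with the scalar logistic ODE in Lemma \ref{lem-00001}. From $u'\le u(a_1^M-b_1^L u)$ and $v'\le v(a_2^M-c_2^L v)$ one gets $\limsup_{t\to\infty}u(t)\le a_1^M/b_1^L$ and $\limsup_{t\to\infty}v(t)\le a_2^M/c_2^L$ for every positive solution. Plugging these asymptotic upper bounds into the lower comparisons $u'\ge u(a_1^L-c_1^M\, v-b_1^M u)$ and $v'\ge v(a_2^L-b_2^M\, u-c_2^M v)$, then iterating once more, yields the two-sided bounds
$$
\frac{a_1^L c_2^L-c_1^M a_2^M}{b_1^M c_2^L-c_1^M b_2^L}\le \liminf_{t\to\infty} u(t)\le \limsup_{t\to\infty}u(t)\le \frac{a_1^M c_2^M-c_1^L a_2^L}{b_1^L c_2^M-c_1^L b_2^M},
$$
and the analogous bounds for $v$. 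The first hypothesis makes the numerators positive, while multiplying the two hypotheses gives $b_1^L c_2^L>b_2^M c_1^M$, which ensures all denominators are positive. For any $t_0$ and any positive solution on $[t_0,\infty)$, these asymptotic bounds hold eventually, giving (2).

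For (1), I would use the weighted logarithmic Lyapunov function
$$
V(t)=\alpha\,\bigl|\ln u_1(t)-\ln u_2(t)\bigr|+\beta\,\bigl|\ln v_1(t)-\ln v_2(t)\bigr|.
$$
Differentiating, $\frac{d}{dt}(\ln u_1-\ln u_2)=-b_1(t)(u_1-u_2)-c_1(t)(v_1-v_2)$ and similarly for $v$, so
$$
\dot V(t)\le -\bigl(\alpha b_1(t)-\beta b_2(t)\bigr)|u_1-u_2|-\bigl(\beta c_2(t)-\alpha c_1(t)\bigr)|v_1-v_2|.
$$
The inequality $b_1^L c_2^L>b_2^M c_1^M$ already used in (2) lets me pick positive $\alpha,\beta$ with $b_2^M/b_1^L<\alpha/\beta<c_2^L/c_1^M$, so both coefficients in the display above are bounded below by a uniform positive constant. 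Combining with the uniform positive lower bounds on $u_k,v_k$ from (2) gives $\dot V(t)\le -\kappa\bigl(|u_1-u_2|+|v_1-v_2|\bigr)$ on $[t_0+T_*,\infty)$ for some $\kappa>0$. Integrability of $|u_1-u_2|+|v_1-v_2|$, together with the uniform continuity of this map (from boundedness of the derivatives on the absorbing set), forces $u_1-u_2\to 0$ and $v_1-v_2\to 0$ by Barbalat's lemma.

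For (3) in the $T$-periodic setting, the absorbing rectangle from (2) is eventually invariant under the period-$T$ Poincar\'e map, so Brouwer's fixed point theorem gives a $T$-periodic solution inside it; (1) then promotes this to uniqueness and global attraction on the open first quadrant. For (4), take any entire positive bounded solution guaranteed by (2), form the skew-product flow over the hull of the coefficient tuple (using Lemma \ref{almost-periodic-function-lm}), and apply the Favard separation criterion: by (1), any two entire bounded positive solutions of any system in the hull must coincide, so each fiber of the unique minimal set is a singleton, i.e.\ the bounded positive entire solution is almost periodic. The main obstacle I expect is not the conceptual structure but the bookkeeping of the iterated $\limsup/\liminf$ chain in (2) and the verification of the Barbalat hypothesis in (1), together with the technicalities of the hull/Favard argument in (4).
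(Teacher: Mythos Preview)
The paper does not give its own argument here: it simply cites \cite{Ahm} (Ahmad) for (1)--(3) and \cite{HeSh} (Hetzer--Shen) for (4). Your outline is essentially a sketch of the content of those references---comparison to logistic scalars for the absorbing rectangle, the weighted logarithmic Lyapunov function $\alpha|\ln u_1-\ln u_2|+\beta|\ln v_1-\ln v_2|$ for asymptotic coincidence, a Poincar\'e-map fixed point in the periodic case, and a hull/uniqueness argument in the almost periodic case---so in substance you and the paper agree.

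Two points of precision on (2). First, ``iterating once more'' does not give the stated bounds: after one pass you obtain $\liminf u\ge (a_1^L c_2^L-c_1^M a_2^M)/(b_1^M c_2^L)$, with denominator $b_1^M c_2^L$, not $b_1^M c_2^L-c_1^M b_2^L$. The bounds in the lemma are the \emph{fixed point} of the four-variable iteration $(U,u,V,v)\mapsto(\cdots)$; equivalently, they are the vertices of the rectangle that Ahmad verifies to be forward invariant by checking the sign of the vector field on its boundary. Second, (2) asks for a solution lying in the rectangle for \emph{all} $t\ge t_0$, not just eventually; for this you need forward invariance of the rectangle (and then start inside it), not merely the asymptotic $\liminf/\limsup$ estimates you derived. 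Neither issue affects the overall strategy, but the bookkeeping you flagged in your last paragraph is exactly where the argument needs to be tightened.
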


\vspace{-0.1in}\begin{proof}
(1), (2),  (3) follow from  \cite[Theorems 1  and 2]{Ahm}, and (4) follows from \cite[Theorem C]{HeSh}.
\end{proof}

\section{Local existence and uniqueness of classical solutions}

In this section, we  study the local existence and uniqueness of classical solutions  of   $\eqref{u-v-eq1}$  with given initial
 functions and prove Theorem  \ref{thm-001}.

First, observe that $C^0(\bar\Omega)\subset L^p(\Omega)$ for any $1\le p<\infty$. Throughout this section, unless specified otherwise,
$p>1$ and $\alpha\in (1/2,1)$ are such that $X^\alpha \subset C^1(\bar\Omega)$, where $X^\alpha =D(A^\alpha)$ with the graph norm
$\|u\|_\alpha=\|A^\alpha u\|_{L^p(\Omega)}$ and $A=I-\Delta$ with domain $D(A)=\{u\in W^{2,p}(\Omega)\,|\, \frac{\p u}{\p n}=0$ on $\p\Omega\}$.
 Note that $A:  D(A) \to X^0(=L^p(\Omega))$ is a linear, bounded bijection, and $A^{-1} \colon X^{0} \to X^{\alpha}$ is compact.

 Next, we note that if $(u(x,t;t_0,u_0),v(x,t;t_0,u_0))$ is a classical solution of \eqref{u-v-eq1} satisfying
 the properties in Theorem \ref{thm-001} (1) or (2), then $v(\cdot,t;t_0,u_0)=A^{-1} u(\cdot,t;t_0,u_0)$ and $u(x,t;t_0,u_0)$
 is a classical solution of
 \begin{equation}
\label{u-eq00}
\begin{cases}
u_t=(\Delta -1)u+f(t,x,u),\quad x\in\Omega\cr
\frac{\p u}{\p n}=0,\quad x\in\p\Omega\end{cases}
\end{equation}
with $u(x,t_0;t_0,u_0)=u_0(x)$,
where
\vspace{-0.05in}$$f(t,x,u)=-\chi\nabla u \cdot \nabla A^{-1}u +\chi u(u-A^{-1}u)+u\Big(1+a_0(t,x)-a_1(t,x)u-a_2(t,x)\int_{\Omega}u\Big).
\vspace{-0.05in}$$
Conversely,  if $u_0\in X^\alpha$ (resp. $u_0\in C^0(\bar\Omega)$) and $u(x,t;t_0,u_0)$ is a classical solution of
\eqref{u-eq00} satisfying the properties in Theorem \ref{thm-001} (1) (resp. (2)), then
$(u(x,t;t_0,u_0),v(x,t;t_0,u_0))$ is a classical solution of \eqref{u-v-eq1} satisfying the properties in  Theorem \ref{thm-001} (1) (resp. (2)),
where $v(\cdot,t;t_0,u_0)=A^{-1} u(\cdot,t;t_0,u_0)$.

We now prove Theorem \ref{thm-001}  and we will only present the proof for $\chi>0$ since the case $\chi\leq 0$ is similar.
In the rest of this section, $C$ denotes a constant independent of the initial conditions and the solutions under consideration, unless otherwise specified.

\begin{proof} [Proof of Theorem \ref{thm-001}]
(1) We  use the semigroup approach to prove (1) and divide the proof into four steps.

\noindent {\bf Step 1.} (Existence of strong  solution). In this step, we  prove the existence of a unique strong solution $u(\cdot,t;t_0,u_0)$ of \eqref{u-eq00}
in $X^\alpha$ with $u(\cdot,t_0;t_0,u_0)=u_0$ and satisfying \eqref{local-1-eq1} and \eqref{local-1-eq2}. In order to do so, we write \eqref{u-eq00} as
\begin{equation}
\label{u-eq0}
u_t+A u=F(t,u),
\end{equation}
where
$F(t,u)=-\chi\nabla u \cdot \nabla A^{-1}u +\chi u(u-A^{-1}u)+u\Big(1+a_0(t,\cdot)-a_1(t,\cdot)u-a_2(t,\cdot)\int_{\Omega}u\Big).$

We claim that $F:\RR\times X^\alpha \to X^0$ is locally H\"older continuous in $t$ and locally Lipschitz continuous in $u$.
 In fact, for any $s,t\in\RR$  and $u,w \in X^{\alpha}$, we have
\vspace{-0.05in}\begin{eqnarray*}
  \|F(t,u)-F(s,w)\|_{L^p(\Omega)} \le   C \Big[\left(\|u\|_{C^1}+\|w\|_{C^1}+1\right)\|u-w\| _{C^1} +  \left( \|u\|_{C^1}+\|w\|^2_{C^1}+\|w\|_{C^1}  \right) |t-s|^{\nu}\Big].
\vspace{-0.05in}\end{eqnarray*}
Since $X^{\alpha} \subset C^1(\bar{\Omega}),$ we get
\vspace{-0.05in}\begin{eqnarray*}
   \|F(t,u)-F(s,w)\|_{L^p(\Omega)} \leq  C \Big[ \left(\|u\|_{\alpha}+\|w\|_{\alpha}+1\right)\|u-w\| _{\alpha}+
     \left( \|u\|_{\alpha}+\|w\|^2_{\alpha}+\|w\|_{\alpha}  \right) |t-s|^{\nu}\Big].
\vspace{-0.05in}\end{eqnarray*}
The claim then follows.
Then by Lemma \ref{mild-strong-solution-lm}(1), for every $(t_0,u_0 ) \in \mathbb{R}\times X^{\alpha},$ there exists $T_{\max}=T_{\max}(t_0,u_0)>0$  such that $\eqref{u-eq0}$ has a unique strong solution $ u(\cdot,t;t_0,u_0)$ on $(t_0,t_0+T_{\max})$.
 Moreover  if $T_{\max}< \infty,$ then
$\limsup_{t \nearrow T_{\max}}   \left \| u(\cdot,t+t_0;t_0,u_0) \right\|_{{X^{\alpha}}}  =\infty.$

Note that $u(\cdot,\cdot;t_0,u_0)\in C([t_0,t_0+T_{\max}),X^\alpha)$. By  Lemma \ref{mild-strong-solution-lm}(2), $u \in C^{\delta}((t_0,t_0+T_{\max}),X^{\alpha}) \cap  C^1((t_0,t_0+T_{\max}),X_0)$ for any $\delta $ satisfying  $0<\delta<1-\alpha$. Hence \eqref{local-1-eq1}
holds.  Moreover,  $u(x,t):=u(x,t;t_0,u_0)$ is a mild solution of \eqref{u-eq0} given by
 \begin{align*}
\label{u-mild0}
{u}(\cdot,t)=&e^{-A (t-t_0)}u_0 -\chi\int_{t_0}^t e^{-A(t-s)} \nabla u(\cdot,s) \cdot \nabla A^{-1}u(\cdot,s)ds\\
 &+\chi\int_{t_0}^t e^{-A(t-s)}u(\cdot,s)\big(u(\cdot,s)-A^{-1}u(\cdot,s)\big)ds\\
&+\int_{t_0}^t e^{-A(t-s)}u(\cdot,s)\Big(1+ a_0(s,\cdot)-a_1(s,\cdot)u(\cdot,s)-a_2(s,\cdot)\int_{\Omega}u(\cdot,s)\Big)ds.
\end{align*}


It remains to show that, if $T_{\max}<\infty$, then  \eqref{local-1-eq2} holds.
Assume by contradiction that $T_{\max}<\infty$ and
$\limsup_{t \nearrow T_{\max}}\left (    \left\| u(\cdot,t+t_0) \right\|_{C^0(\bar \Omega)} \right)=L<\infty.$
Then there exists $\delta_0>0$ such that
\begin{equation}
\label{local-existence-eq1}
L \leq \sup_{T_{\max}-\delta_0 <t < T_{\max}}\left\| u(\cdot ,t+t_0) \right\|_{C^0(\bar \Omega)}<L+1.
 \end{equation}
 Let $t$ be such that $ t_0+T_{\max}-\delta_0 <t < t_0+T_{\max}$.
  We have
\begin{align*}
&\|F(t,u(\cdot,t))\|_{L^p(\Omega)}\\
&\leq \|-\chi\nabla u \cdot \nabla A^{-1}u \|_{L^p(\Omega)}+\|\chi u(u-A^{-1}u)+u\Big(1+a_0(t,\cdot)-a_1(t,\cdot)u-a_2(t,\cdot)\int_{\Omega}u\Big)\|_{L^p(\Omega)}\\
&\leq \chi \|u(\cdot,t)\|_{ C^1(\bar{\Omega})}\|\nabla A^{-1}u(\cdot,t) \|_{L^p(\Omega)}\\
&+\| u(\cdot,t)\|_{C^0(\bar\Omega)}\Big(A_0+1+|\Omega|(\chi+A_1+|\Omega|A_2) \| u(\cdot,t)\|_{C^0(\bar\Omega)}+\chi \|A^{-1}u(\cdot,t) \|_{L^p(\Omega)}\Big).
 \end{align*}
 By the regularity and a priori estimates for parabolic equations, \eqref{local-existence-eq1}, and $X^{\alpha} \subset C^1(\bar{\Omega}),$  there a positive constant $C=C(L)$ independent of $t$ such that
\vspace{-0.05in}$$
\|F(t,u(\cdot,t))\|_{L^p(\Omega)} \leq C(1+\|u(\cdot,t)\|_{ \alpha}).
\vspace{-0.05in}$$
Fix $t_1$ such that $ t_0+T_{\max}-\delta_0 <t_1 < t_0+T_{\max}.$ Note that, on $[t_1, t_0+T_{\max})$, we have
\vspace{-0.05in}$$u(\cdot,t)=e^{-A (t-t_1)}u(\cdot,t_1)+\int_{t_1}^t e^{-A(t-s)}F(s, u(\cdot,s))ds.
\vspace{-0.05in}$$
By \cite[Theorem 1.4.3]{DH77} , we get
\vspace{-0.1in}\begin{align*}
\|u(\cdot,t)\|_{\alpha} & \leq \| e^{-A (t-t_1)}u(\cdot,t_1) \|_{\alpha}+\chi \int_{t_1}^t \| A^{\alpha}e^{-A(t-s)}\|
 \|F(s,u(\cdot,s))\| _{p}ds \\
&  \leq C (t-t_1)^{-\alpha} \|u(\cdot,t_1)\|_{C^0(\bar\Omega)}+ C\int_{t_1}^t  (t-s)^{-\alpha}(1+\|u(\cdot,s)\|_{\alpha})ds\\
& \leq \left(C\|u(\cdot,t_1)\|_{C^0(\bar\Omega)}+C(T_{\max}-t_0)\right)(t-t_1)^{-\alpha}+C\int_{t_1}^t  (t-s)^{-\alpha}\|u(\cdot,s)\|_{\alpha}ds.
\end{align*}
Then by Gronwall's inequality (see \cite[page 6]{DH77}), there exists a constant $M=M(C\|u(\cdot,t_1)\|_{C^0(\bar\Omega)}+C(T_{\max}-t_1),\alpha,T_{\max})$ such that
\vspace{-0.05in}\begin{align*}
\|u(\cdot,t)\|_{\alpha} \leq \{C\|u(\cdot,t_1)\|_{C^0(\bar\Omega)}+C(T_{\max}-t_1)\}M(t-t_1)^{-\alpha}.
\vspace{-0.05in}\end{align*}
Thus $\limsup_{t\nearrow T_{\max}}\|u(\cdot,t+t_0)\|_{X^{\alpha}}<\infty$,  a contradiction. Hence if $T_{\max}<\infty$, then \eqref{local-1-eq2}
holds.


\medskip

\noindent {\bf Step 2.}  (Regularity). In this step,  we prove that $u(x,t):=u(x,t;t_0,u_0)$ obtained in (i) is a classical solution of \eqref{u-eq00} on $(t_0,t_0+T_{\max})$ and then $(u(x,t;t_0,u_0),v(x,t;t_0,u_0))$ is a classical solution of \eqref{u-v-eq1} on $(t_0,t_0+T_{\max})$ satisfying the properties in Theorem \ref{thm-001}(1), where $v(\cdot,t;t_0,u_0)=A^{-1}u(\cdot,t;t_0,u_0)$.

Fix $t_0<t_1<T<t_0+T_{\max}$ and consider the problem
\vspace{-0.05in}\begin{equation}
\label{u-tilde-eq0}
\begin{cases}
\tilde{u}_t(x,t)=(\Delta-1)\tilde{u}(x,t)+g(x,t), &\text{$x \in \Omega,\quad t \in (t_1,T)$} \cr
\tilde{u}(x,t_1)=u(x,t_1), &\text{$x \in \Omega$} \cr
\frac{d \tilde{u}}{d n}=0, \quad x\in\p \Omega,
\end{cases}
\end{equation}
where
\vspace{-0.1in}\begin{align*}
g(x,t)= &-\chi\nabla A^{-1}u(x,t) \cdot \nabla u(x,t)  \\
 & +\Big(1+ a_0(x,t)-\chi A^{-1}u(x,t)+ \chi u(x,t)-a_1(x,t)u(x,t)-a_2(x,t)\int_{\Omega}u(\cdot,t)\Big)u(x,t).
 \vspace{-0.1in}\end{align*}
By Lemma \ref{lem-002}, $t\to g(\cdot,t)\in C^\theta(\bar\Omega)$ is H\"older continuous in $t\in (t_0,t_0+T_{\max})$ for some $\theta\in(0,1)$.
Then by \cite[Theorem 15.1, Corollary 15.3]{Amann},  \eqref{u-tilde-eq0} has a unique classical solution
$\tilde u\in C^{2,1}(\bar {\Omega}\times (t_1,T))\cap C^0(\bar {\Omega}\times [t_1,T))$. Moreover, by Lemma \ref{mild-strong-solution-lm},
\vspace{-0.1in}\begin{align*}
\label{u-mild0}
\tilde{u}(\cdot,t)=&e^{-A(t-t_1)}u(\cdot,t_1) -\chi\int_{t_1}^t e^{-A(t-s)}\Big(\nabla  u(\cdot,s)\cdot  \nabla A^{-1}u(\cdot,s)- u(\cdot,s)\big(u(\cdot,s)-A^{-1}u(\cdot,s)\big)\Big)ds\\
&+\int_{t_1}^t e^{-A(t-s)}u(\cdot,s)\Big(1+ a_0(s,\cdot)-a_1(s,\cdot)u(\cdot,s)-a_2(s,\cdot)\int_{\Omega}u(\cdot,s)\Big)ds.
\vspace{-0.01in}\end{align*}
Thus $\tilde{u}(x,t)=u(x,t)$ for $t\in [t_1,T)$ and $u\in C^{2,1}(\bar {\Omega}\times (t_1,T))\cap C^0(\bar {\Omega}\times [t_1,T)).$
Letting $t_1\to t_0$ and $T\to T_{\max}$, we have $u\in C^{2,1}(\bar {\Omega}\times (t_0,t_0+T_{\max}))\cap C^0(\bar {\Omega}\times [t_0,t_0+T_{\max})).$

Let $v(\cdot,t;t_0,u_0)=A^{-1}u(\cdot,t;t_0,u_0)$. We then have that $(u(x,t;t_0,u_0),v(x,t;t_0,u_0))$ is a classical solution of \eqref{u-v-eq1}
on $(t_0,t_0+T_{\max})$ satisfying the properties in Theorem \ref{thm-001}.

\smallskip

\noindent {\bf Step 3.}  (Uniqueness). In this step,  we prove the uniqueness of classical solutions of \eqref{u-v-eq1} satisfying the properties in Theorem \ref{thm-001}(1).

  Suppose that $(u_1(x,t),v_1(x,t))$ and $(u_2(x,t),v_2(x,t))$  are two classical solutions of $\eqref{u-v-eq1}$ on $(t_0,t_0+T_{\max})$  satisfying the properties in Theorem \ref{thm-001}.  First, set $u=u_1-u_2$ and $v=v_1-v_2$.  Then $(u,v)$ satisfies
\vspace{-0.1in}\[
\begin{cases}
u_t=\Delta u-\chi\nabla (u\cdot \nabla v_1)-\chi\nabla (u_2\cdot \nabla v)\\
\quad\quad +u\left(a_0(t,x)-a_1(t)(u_1+u_2\right)-a_2(t,x)\int_{\Omega}u_1)-a_2(t,x)\left(\int_{\Omega}u\right)u_2, &\text{ $x\in \Omega , t>t_0$}\\
\Delta u +u-v=0, &\text{ $x\in \Omega , t>t_0$} \\
\frac{\p u}{\p n}=\frac{\p v}{\p n}=0, &\text{on $\p \Omega$}\\
u(x,t_0)=0, &\text{on $x \in \Omega$.}
\end{cases}
\]

Next,
fix $t_1,T$ such that $t_0<t_1<T<t_0+T_{\max}$. It is clear that, for $t\in [t_1,t_0+T_{\max})$,
\vspace{-0.1in}\begin{align}
\label{uniqueness-eq1}
u(\cdot,t)=&\,  e^{-A(t-t_1)}\big(u_1(\cdot,t_1)-u_2(\cdot,t_1)\big)-\chi\int_{t_1}^t e^{-A(t-s)}\nabla \big[u(\cdot,s)\cdot \nabla v_1(\cdot,s)+u_2(\cdot,s)\cdot \nabla v(\cdot,s)\big]ds\nonumber\\
&+\int_{t_1}^t e^{-A(t-s)}u(\cdot,s)\Big(1+ a_0(s,\cdot)-a_1(s,\cdot)(u_1(\cdot,s)+u_1(\cdot,s))-a_2(s,\cdot)\int_{\Omega}u_1(\cdot,s)\Big)ds\nonumber\\
&-\int_{t_1}^t e^{-A(t-s)}a_2(s,\cdot)\big(\int_{\Omega}u(\cdot,s)\big)u_2(\cdot,s)ds.
\end{align}

Now, fix  $0<\beta <\frac{1}{2}.$  By  regularity and a  priori estimates for
elliptic equations,  \cite[Theorem 1.4.3]{DH77},   Lemma \ref{lem-003}, and \eqref{uniqueness-eq1},   for any $\epsilon \in (0,\frac{1}{2}-\beta),$ we have
\vspace{-0.1in}\begin{align}
\label{uniqueness-eq2}
\|u(\cdot,t)\|_{X^\beta}\leq&  C\|u(\cdot,t_1)\|_{X^\beta}+ C\chi\max_{t_1\leq s\leq T}\|\nabla v_1(\cdot,s)\|_{C^0(\bar{\Omega})}\int_{t_1}^t(t-s)^{-\beta-\epsilon-\frac{1}{2}}\|u(\cdot,s)\|_{X^\beta}ds\nonumber\\
&+C\chi\max_{t_1\leq s\leq T}\|u_2(\cdot,s)\|_{C^0(\bar{\Omega})}\int_{t_1}^t(t-s)^{-\beta-\epsilon-\frac{1}{2}}\|u(\cdot,s)\|_{X^\beta}ds\nonumber\\
&+C\int_{t_1}^t (t-s)^{-\beta}\{1+ A_0+A_1[\max_{t_1\leq s\leq T}(\|u_1(\cdot,s)\|_{C^0(\bar{\Omega})}+\|u_2(\cdot,s)\|_{C^0(\bar{\Omega})})]\}\|u(\cdot,s)\|_{X^\beta}ds\nonumber \\
&+C\int_{t_1}^t (t-s)^{-\beta}A_2|\Omega|\max_{t_1\leq s\leq T}\|u_1(\cdot,s)\|_{C^0(\bar{\Omega})}\|u(\cdot,s)\|_{X^\beta}ds\nonumber\\
& +C\int_{t_1}^t(t-s)^{-\beta}A_2\|u_2(\cdot,s)\|_{C^0(\bar{\Omega})}\|u(\cdot,s)\|_{X^\beta}ds.
\end{align}
Thus there exists a constant positive $C=C(u_1,u_2,v_1,v_2,A_i,T,\beta, \epsilon)$ such that for all $t \in [t_1,T]$,
\vspace{-0.1in}$$\|u(\cdot,t)\|_{X^\beta}\leq  C_1 \|u(\cdot,t_1)\|_{X^\beta}+C \int_{t_1}^t(t-s)^{-\beta-\epsilon-\frac{1}{2}}\|u(\cdot,s)\|_{X^\beta}ds.
$$
Letting $t_1\to t_0$, we have
\vspace{-0.1in}\begin{equation}
\label{uniqueness-eq3}
\|u(\cdot,t)\|_{X^\beta}\leq  C \int_{t_1}^t(t-s)^{-\beta-\epsilon-\frac{1}{2}}\|u(\cdot,s)\|_{X^\beta}ds\quad {\rm for}\quad t\in [t_0,T].
\end{equation}
By \eqref{uniqueness-eq3} and the generalized Gronwall's inequality (see \cite[page 6]{DH77}), we get $u(\cdot,t)=0$ for $t\in [t_0,T]$.
Letting $T \to t_0+T_{\max},$  we get $u(\cdot,t)=0$  for $t\in [t_0,t_0+T_{\max}).$ Since $v(\cdot,t)=A^{-1}u(\cdot,t), $   $v(\cdot,t)=0$ for $t\in [t_0,t_0+T_{\max}).$   Therefore $(u_1(x,t),v_1(x,t))=(u_2(x,t),v_2(x,t))$ for $(x,t)\in \bar{\Omega} \times[t_0,t_0+T_{\max}).$

\medskip

\noindent {\bf Step 4.} (Nonnegativity). In this last step,  we prove the nonnegativity of the classical solutions.
  Since $u(x,t;t_0,u_0)$ is classical solution of  $\eqref{u-eq00},$  by  maximum principle for parabolic equations, we have that
   $u(x,t;t_0,u_0)$ is nonnegative
  (see \cite[Theorem 7 on page 41]{FA64}). And now, since $u(x,t;t_0,u_0)$ is nonnegative, by maximum principle for elliptic equations,
   $v(x,t;t_0,u_0)$ is nonnegative (see \cite[Theorem 18 on  page 53]{FA64}).

\medskip

(2)  We prove (2) by  Banach Fixed Point Theorem and some arguments in (1) and divide the proof into
three steps.  To this end, we first introduce the notion of  generalized mild solution of \eqref{u-eq0}.
 A function $u \in  C^0([t_0, t_0+T), C^0(\bar{\Omega})) $ is called a {\it generalized mild solution} of \eqref{u-eq0} with $u(t_0)=u_0$ if 
\vspace{-0.1in}\begin{align*}
u(t)=&e^{-A(t-t_0)}u_0 -\chi\int_{t_0}^t e^{-A(t-s)}\nabla\cdot ( u(s)  \nabla A^{-1}u(s))ds\\
&+\int_{t_0}^t e^{-A(t-s)}u(s)\Big(1+ a_0(s,\cdot)-a_1(s,\cdot)u(s)-a_2(s,\cdot)\int_{\Omega}u(s)\Big)ds
\end{align*} 
for $t\in [t_0,t_0+T)$.

\noindent {\bf Step 1.}  (Existence of generalized mild  solution). In this step, we  prove  the existence of a unique  generalized mild  solution $u(\cdot,t;t_0,u_0)$ of \eqref{u-eq0}.

 In order to do so,
fix $t_0\in\RR$ and $u_0 \in C^0(\bar{\Omega})$.  For given  $T>0$ and $R>\|u_0\|_{C^0(\bar\Omega)}$, let
\vspace{-0.05in}$$\mathcal{X}_T= C^0([t_0, t_0+T ], C^0(\bar{\Omega}) )
\vspace{-0.05in}$$
with the supremum norm $\|u\|_{\mathcal{X}_T}=\max_{t_0\leq t \leq t_0+T}\|u(t)\|_{ C^0(\bar{\Omega}) }$,  and let
\vspace{-0.05in}$$\mathcal{S}_{T,R}=\left\{u \in \mathcal{X}_T \,|\, \|u\|_{\mathcal{X}_T} \leq R \right \}.
\vspace{-0.05in}$$
 Note that $\mathcal{S}_{T,R}$ is a closed subset of the Banach space $\mathcal{X}_T$.

First, we claim that, for given $u\in \mathcal{S}_{T,R}$ and $t\in [t_0,t_0+T]$, $(Gu)(t)$ is well defined, where
\vspace{-0.05in}\begin{align*}
(Gu)(t)=&e^{-A(t-t_0)}u_0 -\chi\int_{t_0}^t e^{-A(t-s)}\nabla\cdot ( u(s)  \nabla A^{-1}u(s))ds\\
& +\int_{t_0}^t e^{-A(t-s)}u(s)\Big(1+ a_0(s,\cdot)-a_1(s,\cdot)u(s)-a_2(s,\cdot)\int_{\Omega}u(s)\Big)ds
\vspace{-0.05in}\end{align*}
and the integrals are taken in $C^0(\bar\Omega)$.
In fact, for any $u\in \mathcal{S}_{T,R}$, there is $\{u_n\}\subset \mathcal{S}_{T,R}$ satisfying that $\frac{\p u_n}{\p x_i}\in \mathcal{S}_{T,R}$,
$\frac{\p^2 u_n}{\p x_i\p x_j}\in \mathcal{S}_{T,R}$ and
$\|u_{n}-u\|_{\mathcal{X}_T}\to 0\quad {\rm as}\quad n\to\infty.$
This implies that
\vspace{-0.05in}\begin{align*}
(t_0,t)\ni s\to  & e^{-A(t-s)}\nabla\cdot ( u(s)  \nabla A^{-1}u(s))\\
 &\,\, + e^{-A(t-s)}u(s)\Big(1+ a_0(s,\cdot)-a_1(s,\cdot)u(s)-a_2(s,\cdot)\int_{\Omega}u(s)\Big)\in C^0(\bar\Omega)
\end{align*}
 is measurable. Moreover,  choose $p$ such that $n<p$ and then choose $\beta$ such that $\frac{n}{2p}<\beta < \frac{1}{2}$.  By Lemma \ref{lem-002}, we get  $X^{\beta} \subset C^0(\bar{\Omega}).$ Then we have
 \begin{align*}
\|G(u)(t)\|_{ C^0(\bar{\Omega})}  & \leq \|e^{-A(t-t_0)}u_0\|_{C^0(\bar{\Omega})}+\chi\int_{t_0}^t\| e^{-A(t-s)}\nabla\cdot ( u(s)  \nabla A^{-1}u(s))\|_{ C^0(\bar{\Omega})}ds \nonumber\\
&+\int_{t_0}^t\| e^{-A(t-s)}u(s)\Big(1+ a_0(s,\cdot)-a_1(s,\cdot)u(s)-a_2(s,\cdot)\int_{\Omega}u(s)\Big)\|_{ C^0(\bar{\Omega}}ds \nonumber\\
 & \leq \|u_0\|_{C^0(\bar{\Omega})}+C\int_{t_0}^t\| A^{\beta}e^{-A(t-s)}\nabla\cdot ( u(s)  \nabla A^{-1}u(s))\|_{ L^p(\Omega)}ds \nonumber\\
&+C\int_{t_0}^t\| A^{\beta}e^{-A(t-s)}u(s)\Big(1+ a_0(s,\cdot)-a_1(s,\cdot)u(s)-a_2(s)\int_{\Omega}u(s)\Big)\|_{ L^p(\Omega)}ds
\end{align*}
for $t\in [t_0,t_0+T]$.
By regularity and a priori estimates for parabolic equations and Lemma \ref{lem-003}, for any  $\epsilon \in (0,\frac{1}{2}-\beta),$ we have
\vspace{-0.1in}\begin{align}
\label{new-old-eq1}
\|G(u)(t)\|_{ C^0(\bar{\Omega})}  & \leq \|u_0\|_{C^0(\bar{\Omega})}+CR^2\int_{t_0}^t (t-s)^{-\beta-\frac{1}{2}-\epsilon}e^{-\mu(t-s)}ds \nonumber\\
& \quad +CR[1+A_0+R(A_1+|\Omega|A_2)]\int_{t_0}^t (t-s)^{-\beta}e^{-\mu(t-s)}ds\nonumber\\
&\leq \|u_0\|_{C^0(\bar{\Omega})} + CR^2T^{\frac{1}{2}-\beta -\epsilon} + CR[1+A_0+R(A_1+|\Omega|A_2)]T^{1-\beta}
\end{align}
for $t\in[t_0,t_0+T]$.
 The claim then follows.

Next,   fix $R>\|u_0\|_{C^0(\bar\Omega)}$.  We claim that   $G$ maps $\mathcal{S}_{T,R}$ into itself and is a contraction  for $0<T\ll 1$.

We first show that   $G$ maps $\mathcal{S}_{T,R}$ into itself for $0<T\ll 1$. To this end,
 for any $u\in \mathcal{S}_{T,R}$, $0\le \beta <\frac{1}{2}$,  and $t_0<t<t_0+T$,
we have
\begin{align}
\|(Gu)(t)\|_{ X^{\beta}}  & \leq \|A^{\beta}e^{-A(t-t_0)}u_0\|_{L^p(\Omega)}+\chi\int_{t_0}^t\| A^{\beta}e^{-A(t-s)}\nabla\cdot ( u(s)  \nabla A^{-1}u(s))\|_{L^p(\Omega)}ds \nonumber\\
&+\int_{t_0}^t\| A^{\beta}e^{-A(t-s)}u(s)\Big(1+ a_0(s,\cdot)-a_1(s,\cdot)u(s)-a_2(s,\cdot)\int_{\Omega}u(s)\Big)\|_{L^p(\Omega)}ds. \nonumber
\end{align}
Then by \cite[Theorem 1.4.3]{DH77} and  Lemma \ref{lem-003}, for any  $\epsilon \in (0,\frac{1}{2}-\beta),$ we have
\[ \|(Gu)(t))\|_{ X^{\beta}}   \leq C \|u_0\|_{C^0(\bar{\Omega})}(t-t_0)^{-\beta}+ C R^2(t-t_0)^{\frac{1}{2}-\beta-\epsilon}+ CR[1+A_0+R(A_1+|\Omega|A_2)](t-t_0)^{1-\beta}. \]
Thus  $u(t) \in X^{\beta}.$
Choose $\beta$ such $\frac{n}{2p}<\beta<\frac{1}{2}$ and then choose $\theta$ such that $0<\theta<2\beta-\frac{n}{p},$  by Lemma \ref{lem-003},  we have
that  $X^{\beta} \subset C^{\theta}(\bar{\Omega}).$ Thus $u(t) \in  C^{\theta}(\bar{\Omega})$ for any $t_0<t<t_0+T$.

Choose $\sigma$ such that $\beta+\sigma<\frac{1}{2}$  where $\beta$  is as in the above.
Fix any $t_0<T<t_0+T_{\max}$. Then for any $t,h>0$ such that  $t_0<t<t+h<T<t_0+T$,
\begin{align}
(Gu)(t+h)-(Gu)(t) &=\underbrace{ (e^{-Ah}-I)e^{-A(t-t_0)}u_0}_{I_{1}}-\chi\underbrace{\int_{t_0}^t(e^{-Ah}-I) e^{-A(t-s)}\nabla\cdot ( u(s)  \nabla A^{-1}u(s))ds}_{I_{2}}\nonumber\\
&+\underbrace{\int_{t_0}^t(e^{-Ah}-I) e^{-A(t-s)}u(s)(1+ a_0(s,\cdot)-a_1(s,\cdot)u(s)-a_2(s,\cdot)\int_{\Omega}u(s))ds}_{I_{3}} \nonumber\\
&-\chi\underbrace{\int_t^{t+h}e^{-A(t+h-s)}\nabla\cdot ( u(s)  \nabla A^{-1}u(s))ds}_{I_{4}}\nonumber\\
&+\chi\underbrace{\int_t^{t+h}e^{-A(t+h-s)}u(s)(1+ a_0(s,\cdot)-a_1(s,\cdot)u(s)-a_2(s,\cdot)\int_{\Omega}u(s))ds}_{I_{5}}.
\vspace{-0.1in} \end{align}
 Since $e^{-A(t-t_0)}u_0 \in D(A),$ we have
\[A^{\beta}(e^{-Ah}-I)A^{-\sigma}(A^{\sigma}e^{-A(t-t_0)}u_0)=(e^{-Ah}-I)A^{-\sigma}(A^{\sigma+\beta}e^{-A(t-t_0)}u_0).\]
Therefore by \cite[Theorem 1.4.3]{DH77}, we get
\begin{align*}
\|I_1\|_{ X^{\beta}} &=\|(e^{-Ah}-I)A^{-\sigma}(A^{\sigma+\beta}e^{-A(t-t_0)}u_0)\|_{L^p(\Omega)}\leq Ch^{\sigma} \|A^{\sigma+\beta}e^{-A(t-t_0)}u_0\|_{L^p(\Omega)}\\
&\leq Ch^{\sigma}(t-t_0)^{-\sigma-\beta}e^{-(t-t_0)}\|u_0\|_{C^0(\bar\Omega)}.
\end{align*}
By \cite[Theorem 1.4.3]{DH77} and  Lemma \ref{lem-003}, for any  $\epsilon \in (0,\frac{1}{2}-\beta),$ we have
\begin{align*}
\|I_2\|_{ X^{\beta}} &\leq \int_{t_0}^t\|(e^{-Ah}-I)A^{-\sigma}(A^{\sigma+\beta}e^{-A(t-s)}\nabla\cdot ( u(s)  \nabla A^{-1}u(s)))\|_{L^p(\Omega)}ds\nonumber\\
 &\leq Ch^{\sigma}  \int_{t_0}^t\|A^{\sigma+\beta}e^{-A(t-s)}\nabla\cdot ( u(s)  \nabla A^{-1}u(s))\|_{L^p(\Omega)}ds\nonumber\\
 &\leq Ch^{\sigma} \int_{t_0}^t(t-s)^{-\beta-\sigma-\frac{1}{2}-\epsilon}\|u(s)  \nabla A^{-1}u(s)\|_{L^p(\Omega)}ds\\
 &\le Ch^{\sigma}\sup_{t_0\le s\le T}\|u(s)\|^2_{C^0(\bar\Omega)} (t-t_0)^{\frac{1}{2}-\beta-\sigma-\epsilon}.
\end{align*}
By similar arguments as in the estimates of $I_2$, we have
\[\|I_3\|_{ X^{\beta}} \leq  Ch^{\sigma} \sup_{t_0\le s\le T}\|u(s)\|_{C^0(\bar\Omega)} \big(1+\sup_{t_0\le s\le T}\|u(s)\|_{C^0(\bar\Omega)}\big) (t-t_0)^{\frac{1}{2}-\beta-\sigma-\epsilon}.\]
By  Lemma \ref{lem-003}, we have
\begin{align*}
\|I_4\|_{ X^{\beta}} &\leq  \int_t^{t+h}\| A^{\beta}e^{-A(t+h-s)}\nabla\cdot ( u(s)  \nabla A^{-1}u(s))\|_{L^p(\Omega)}ds\nonumber\\
  &\leq C\int_t^{t+h}(t+h-s)^{-\beta}\|u(s)  \nabla A^{-1}u(s)\|_{L^p(\Omega)}ds\leq Ch^{1-\beta} \sup_{t_0\le s\le T}\|u(s)\|^2_{C^0(\bar\Omega)}.
\vspace{-0.1in}\end{align*}
Similarly,  we have
\[ \|I_5\|_{ X^{\beta}} \leq C h^{1-\beta} \sup_{t_0\le s\le T}\|u(s)\|_{C^0(\bar\Omega)} \big(1+\sup_{t_0\le s\le T}\|u(s)\|_{C^0(\bar\Omega)}\big) .\]
Therefore,  $(Gu)(t)$ is locally H\"{o}lder  continuous from $(t_0,t_0+T)$ to $X^{\beta} $ with exponent $\sigma.$
It is clear that  $(Gu)(t)$ is continuous at $t_0$ in $C^0(\bar\Omega)$. Therefore $(Gu)(t)$ is continuous in
$t\in [t_0,t_0+T]$ in $C^0(\bar\Omega)$.  Then by \eqref{new-old-eq1},  $G$ maps $\mathcal{S}_{T,R}$ into itself for $0<T\ll 1$.

 We next show that  $G$ maps $\mathcal{S}_{T,R}$ is a contraction for $0<T\ll 1$. To this end,
 for given $u,w \in \mathcal{S}_{T,R}$, we have
\vspace{-0.1in}\begin{align*}
&\|G(u)(t)-G(w)(t)\|_{ C^0(\bar{\Omega})} \nonumber\\
 & \leq C\int_{t_0}^t\| A^{\beta}e^{-A(t-s)}\nabla\cdot \Big((u(s)-w(s))  \nabla A^{-1}u(s)+w(s) \nabla A^{-1}(u(s)-w(s))\Big)\|_{L^p(\Omega)}ds \nonumber\\
&+C\int_{t_0}^t\| A^{\beta}e^{-A(t-s)}(u(s)-w(s))\Big(1+ a_0(s,\cdot)-a_1(s,\cdot)(u(s)+w(s))-a_2(s,\cdot)\int_{\Omega}u(s)\Big)\|_{ L^p(\Omega)} ds \nonumber \\
&+ C\int_{t_0}^t\| A^{\beta}e^{-A(t-s)}a_2(s,\cdot)w(s)\int_{\Omega}(u(s)-w(s))\|_{ L^p(\Omega)}ds\\
& \leq CRT^{\frac{1}{2}-\beta-\epsilon} \|u-w\|_{\mathcal{X}_T}+CRT^{1-\beta}\Big(A_0+1+2R(A_1+|\Omega|A_2)\Big) \|u-w\|_{\mathcal{X}_T}.
\end{align*}
 Then $G$ is a contraction for $0<T\ll 1$ and the claim follows.

 Now,
By Banach fixed point Theorem, $G$ has a unique fixed point $u \in \mathcal{S}_{T,R}.$ That means $u \in  C^0([t_0, t_0+T ], C^0(\bar{\Omega})) $ and
\vspace{-0.1in}\begin{align*}
\label{u-mild}
u(t)=&e^{-A(t-t_0)}u_0 -\chi\int_{t_0}^t e^{-A(t-s)}\nabla\cdot ( u(s)  \nabla A^{-1}u(s))ds\\
&+\int_{t_0}^t e^{-A(t-s)}u(s)\Big(1+ a_0(s,\cdot)-a_1(s,\cdot)u(s)-a_2(s,\cdot)\int_{\Omega}u(s)\Big)ds.
\end{align*}
Hence $u(\cdot,t;t_0,u_0):=u(t)(x)$ is a  generalized mild solution of \eqref{u-eq0}.
The  generalized mild solution $u(\cdot,t;t_0,u_0)$ may be prolonged by standard method into a maximal  interval $[t_0,t_0+T_{\max})$  such that if $T_{\max}<\infty$ then  $\limsup_{t \nearrow T_{\max}}\| u(\cdot,t+t_0;t_0,u_0) \|_{C^0(\bar{\Omega})}=\infty.$

\smallskip

\noindent {\bf Step 2.} (Regularity). In this step, we prove that $u(t)=u(\cdot,t;t_0,u_0)$ is a classical solution of \eqref{u-eq00} satisfying the properties in Theorem \ref{thm-001}(2), where $u(\cdot,t;t_0,u_0))$ is obtained in Step 1. Then $(u(x,t;t_0,u_0),v(x,t;t_0,u_0))$
with $v(\cdot,t;t_0,u_0)=A^{-1} u(\cdot,t;t_0,u_0)$ is a classical solution of
\eqref{u-v-eq1} satisfying the properties in Theorem \ref{thm-001}(2).

 First, for any $0\le \beta <\frac{1}{2}$ and $\sigma$ such that $\beta+\sigma<\frac{1}{2}$, by the arguments in Step 1,
 $u(t)$ is locally H\"{o}lder  continuous from $(t_0,t_0+T_{\max})$ to $X^{\beta} $ with exponent $\sigma.$

Next, fix $\frac{1}{2}<\alpha<1$. We define the map $B(t): (t_0,t_0+T_{\max}) \to \mathcal{L}(X^{\alpha},L^p(\Omega)) $ by
 \[B(t)\tilde{u}=-\chi\nabla A^{-1}u(t) \cdot \nabla \tilde{u}   +\Big( a_0(t,\cdot)-\chi A^{-1}u(t)+ \chi u(t)-a_1(t,\cdot)u(t)-a_2(t,\cdot)\int_{\Omega}u(t)\Big)\tilde{u}.\]
 We claim that $B$ is well defined and is H\"older continuous in $t$.
 Indeed,
$B(t)$ is linear in $\tilde{u},$ and since $X^{\alpha} \subset C^1(\bar{\Omega}),$ we get
\begin{eqnarray}
\label{Umild001}
\|B(t)\tilde{u}\|_{L^p(\Omega)} &\le& \chi\|\nabla A^{-1}u(t)\|_{L^p(\Omega)} \|\tilde{u}\|_{C^1(\bar{\Omega})}+A_0\|\tilde{u}\|_{C^1(\bar{\Omega})}\nonumber\\
&&+(\chi+A_1+|\Omega|^{p-1}A_2)\|u(t)\|_{L^p(\Omega)}\|\tilde{u}\|_{C^1(\bar{\Omega})}+\chi\|A^{-1}u(t)\|_{L^p(\Omega)}\|\tilde{u}\|_{C^1(\bar{\Omega})} \nonumber\\
&\le& C \big(1+\|u(t)\|_{C^0(\bar\Omega)} \big)\|\tilde{u}\|_{X^{\alpha}}.
\end{eqnarray}
Thus $B(t) \in  \mathcal{L}(X^{\alpha},L^p(\Omega))$ and $B$ is well defined. Moreover,
\begin{eqnarray*}
\label{Umild0001}
&&\|B(t+h)\tilde{u}-B(t)\tilde{u}\|_{L^p(\Omega)}\nonumber\\
&& \le \chi\|\nabla A^{-1}u(t+h)-\nabla A^{-1}u(t)\|_{L^p(\Omega)} \|\tilde{u}\|_{C^1(\bar{\Omega})}+\|a_0(t+h,\cdot)-a_0(t,\cdot)\|_{C^0(\bar\Omega)} \|\tilde{u}\|_{C^1(\bar{\Omega})}\nonumber\\
&&+\|u(t)\|_{L^p(\Omega)}\|a_1(t+h,\cdot)-a_1(t,\cdot)\|_{C^0(\bar\Omega)}\|\tilde{u}\|_{C^1(\bar{\Omega})}\nonumber\\
&&+(\chi+A_1+|\Omega|^{p-1}A_2)\|u(t+h)-u(t)\|_{L^p(\Omega)}\|\tilde{u}\|_{C^1(\bar{\Omega})} \nonumber\\
&&+\chi \|A^{-1}u(t+h)-A^{-1}u(t)\|_{L^p(\Omega)}   \|\tilde{u}\|_{C^1(\bar{\Omega})}+|\Omega|^{p-1}\|u(t)\|_{L^p(\Omega)}\|a_2(t+h,\cdot)-a_2(t,\cdot)\|_{C^0(\bar\Omega)}  \|\tilde{u}\|_{C^1(\bar{\Omega})}.
\end{eqnarray*}
Then by elliptic regularity, {\bf(H1)}, and the fact that $u(t)$ is locally H\"older continuous with respect to the $C^0(\bar{\Omega})$-norm, we have that $B(t)$ is H\"older continuous in $t$.

Finally, fix any $t_1\in (t_0,t_0+T_{\max})$. By \cite[Theorem 7.1.3]{DH77}, we have that
\begin{equation}
\label{u-tilde-eq1}
\begin{cases}
\tilde{u}_t=\Delta \tilde{u} +B(t)\tilde{u}, \quad  t \in (t_1,t_0+T_{\max}) \cr
\tilde{u}(t_1)=u(t_1)
\end{cases}
\end{equation}
has a unique strong solution $\tilde{u}$ which satisfy $\tilde u(t) \in X^{\gamma}$ for any $\gamma<1$ and $t_1<t<t_0+T_{\max}.$  By Lemma \ref{mild-strong-solution-lm}(2), $\tilde{u}$ is given by the formula
\begin{align*}
\label{u-mild}
\tilde{u}(t)=&e^{-A(t-t_1)}u(t_1) -\chi\int_{t_1}^t e^{-A(t-s)}\nabla\cdot ( \tilde{u}(s)  \nabla A^{-1}u(s))ds\\
&+\int_{t_1}^t e^{-A(t-s)}\tilde{u}(s)\Big(1+ a_0(s,\cdot)-a_1(s,\cdot)u(s)-a_2(s,\cdot)\int_{\Omega}u(s)\Big)ds.
\end{align*}
Fix $t_0<t_1<t_2<t_0+T_{\max}$. We have by Lemma \ref{lem-003} with $\beta<\frac{1}{2}$ and $\epsilon \in (0, \frac{1}{2}-\beta)$  that
\begin{align*}
\|\tilde{u}(t)-u(t)\|_{C^0(\bar{\Omega})} &\leq   C \int_{t_1}^t(t-s)^{-\frac{1}{2}-\beta-\epsilon} \| \tilde{u}(s)-u(s)\|_{C^0(\bar{\Omega})} ds  + C  \int_{t_1}^t(t-s)^{-\beta} \| \tilde{u}(s)-u(s)\|_{C^0(\bar{\Omega})}ds\\
&\leq C\int_{t_1}^t(t-s)^{-\frac{1}{2}-\beta-\epsilon} \| \tilde{u}(s)-u(s)\|_{C^0(\bar{\Omega})} ds
\end{align*}
for $t_1\le t\le t_2$ and some $C=C(\sup_{t_1\le t\le t_2}\|u(t)\|_{C^0(\bar\Omega)})$.
Then by generalized Gronwall's inequality (see \cite[page 6]{DH77}), we get $\tilde{u}(t)=u(t)$ in $C^0(\bar{\Omega})$ on $[t_1,t_2]$. Letting
$t_1\to t_0$ and $t_2\to t_0+T_{\max}$, we have $\tilde{u}(t)=u(t)\in X^\gamma$ for any $0\le \gamma<1$ and $t\in (t_0,t_0+T_{\max})$.
It then follows from Theorem \ref{thm-001}(1) that $u(x,t;0,u_0):=u(t)(x)$ is a classical solution of \eqref{u-eq00} satisfying the properties
in Theorem \ref{thm-001}(2).

\medskip

\noindent{\bf Step 3.} (Nonnegativity and uniqueness) By the similar arguments as in Steps 3 and 4 in the proof of  Theorem \ref{thm-001}(1),
we have that $(u(x,t;t_0,u_0),v(x,t;t_0,u_0))$ is the unique nonnegative classical solution of \eqref{u-v-eq1} satisfying Theorem \ref{thm-001},
where $v(\cdot,t;t_0,u_0)=A^{-1}u(\cdot,t;t_0,u_0)$.
\end{proof}

\begin{remark}
\label{hull-rk}
Let $\{t_n\}\subset \RR$. Suppose that $\lim_{n\to\infty} a_i(t+t_n,x)=\hat a_i(t,x)$ locally uniformly in $(t,x)\in\RR\times\bar\Omega$.
Then $\hat a_i(t,x)$ $(i=0,1,2)$ also satisfy the hypothesis (H1)  in the introduction. Hence for any $t_0\in\RR$ and $u_0\in X^\alpha$ or $u_0\in C^0(\bar\Omega)$, \eqref{u-v-eq1} with $a_i(t,x)$ being replaced by $\hat a_i(t,x)$ $(i=0,1,2)$ has also a unique solution
$(\hat u(x,t;t_0,u_0), \hat v(x,t;t_0,u_0))$ satisfying the properties in Theorem \ref{thm-001}(1) or (2).
\end{remark}

The following corollary  follows directly from Theorem \ref{thm-001} and its proof.

\begin{corollary}
\label{cor1}
\begin{itemize}
\item[(1)]
Let $t_0\in\RR$ and $u_0\in X^\alpha$ or $C^0(\bar \Omega)$ be given and let $(u(x,t;t_0,u_0)$, $v(x,t;t_0,u_0))$ be the unique
solution of \eqref{u-v-eq1} with initial condition $u(\cdot,t_0;t_0,u_0)=u_0(\cdot)$ in Theorem \ref{thm-001}(1) or (2). For any $t_0<t_1<t_2<t_0+T_{\max}$, there holds
$$
(u(x,t_2;t_0,u_0),v(x,t_2;t_0,u_0))=(u(x,t_2;t_1,u(\cdot,t_1;t_0,u_0)),v(x,t_2;t_1,u(\cdot,t_1;t_0,u_0)).
$$

\item[(2)] Let $(u(x,t;t_0,u_0)$, $v(x,t;t_0,u_0))$ be the unique
solution of \eqref{u-v-eq1} with initial condition $u(\cdot,t_0;t_0,u_0)=u_0(\cdot)\in X$ in Theorem \ref{thm-001}(1) or (2), where $X=X^\alpha$ or
$C^0(\bar\Omega)$. Then $\RR\times X\ni (t_0,u_0)\mapsto (u(\cdot,t;t_0,u_0),v(\cdot,t;t_0,u_0))\in X\times X$ is continuous locally uniformly
with respect to $t\in (t_0, t_0+T_{\max})$.

\item[(3)]
Let $\{t_n\}\subset \RR$. Suppose that $\lim_{n\to\infty} a_i(t+t_n,x)=\hat a_i(t,x)$ locally uniformly in $(t,x)\in\RR\times\bar\Omega$.
For given $t_0\in\RR$ and $u_0\in X^\alpha$ or $C^0(\bar\Omega)$, let $(u_n(x,t;t_0,u_0),v_n(x,t;t_0,u_0))$ be the solution of \eqref{u-v-eq1}
with $a_i(t,x)$ being replaced by $a_i(t+t_n,x)$ $(i=0,1,2)$ and with
 initial condition $u_n(\cdot,t_0;t_0,u_0)=u_0(\cdot)$ and $(\hat u(x,t;t_0,u_0),\hat v(x,t,;t_0,u_0))$ be the solution of \eqref{u-v-eq1}
 on $(t_0,t_0+\hat T_{\max})$
with $a_i(t,x)$ being replaced by $\hat a_i(t,x)$ $(i=0,1,2)$ and with initial condition $\hat u(\cdot,t_0;t_0,u_0)=u_0(\cdot)$. Then
for any $t\in (t_0,t_0+\hat T_{\max})$,
$$
\lim_{n\to\infty} (u_n(\cdot,t;t_0),v_n(\cdot,t;t_0,u_0))=(\hat u(\cdot,t;t_0,u_0),\hat v(\cdot,t;t_0,u_0))
\quad {\rm in}\quad C^0(\bar\Omega).$$
\end{itemize}

\end{corollary}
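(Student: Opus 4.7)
\medskip\noindent\textbf{Proof proposal for Corollary \ref{cor1}.}
The plan is to deduce each of the three assertions directly from the uniqueness and mild-solution machinery already developed in the proof of Theorem \ref{thm-001}, without re-running a full local-existence argument. All three parts reduce to a Gronwall-type estimate applied to a suitable integral identity, the essential ingredient being the smoothing bounds of Lemma \ref{lem-003}.

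For part (1), the function $(x,t)\mapsto \bigl(u(x,t;t_1,u(\cdot,t_1;t_0,u_0)),v(x,t;t_1,u(\cdot,t_1;t_0,u_0))\bigr)$ is, by construction, a classical solution of \eqref{u-v-eq1} on $(t_1,t_0+T_{\max})$ with value $u(\cdot,t_1;t_0,u_0)$ at time $t_1$. The restriction of $(u(x,t;t_0,u_0),v(x,t;t_0,u_0))$ to the same interval is another such classical solution with the same initial datum, and both meet the $X^{\alpha}$-regularity (resp.\ $C^{0}(\bar\Omega)$-regularity) required by Theorem \ref{thm-001}. The uniqueness argument of Step 3 in the proof of Theorem \ref{thm-001} then forces them to coincide identically on $(t_1,t_0+T_{\max})$; evaluating at $t=t_2$ gives (1).

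For part (2), fix a compact subinterval $[t_0+\delta,T]\subset (t_0,t_0+T_{\max})$. For $(t_0',u_0')$ close to $(t_0,u_0)$ in $\RR\times X$, assertion (1) ensures that after a small time shift the two solutions satisfy the same kind of mild integral equation. Subtracting these mild formulations, invoking the Lipschitz bound on $F$ derived in Step 1 of the proof of Theorem \ref{thm-001}, and using the smoothing estimate of Lemma \ref{lem-003}, one obtains, in the spirit of \eqref{uniqueness-eq2}, an inequality of the form
$$
\|u(\cdot,t;t_0,u_0)-u(\cdot,t;t_0',u_0')\|_{X}\le C\bigl(\|u_0-u_0'\|_{X}+|t_0-t_0'|\bigr)+C\int_{\max\{t_0,t_0'\}}^{t}(t-s)^{-\beta-\varepsilon-\frac12}\|u(\cdot,s;t_0,u_0)-u(\cdot,s;t_0',u_0')\|_{X}\,ds
$$
for suitable $0<\beta<1/2$ and $0<\varepsilon<1/2-\beta$, valid for $t\in[t_0+\delta,T]$, with $C$ depending only on the $X$-bounds of the two solutions on $[t_0+\delta,T]$. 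The generalized Gronwall inequality (\cite[p.~6]{DH77}) then yields the required continuous dependence. The corresponding estimate for $v=A^{-1}u$ is immediate from the boundedness of $A^{-1}$ as an operator into $X^{\alpha}$.

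For part (3), Remark \ref{hull-rk} supplies solutions $u_n(\cdot,t;t_0,u_0)$ (with coefficients $a_i(\cdot+t_n,\cdot)$) and $\hat u(\cdot,t;t_0,u_0)$ (with coefficients $\hat a_i$), defined respectively on $[t_0,t_0+T_{\max}^{(n)})$ and $[t_0,t_0+\hat T_{\max})$. Fix any $T<t_0+\hat T_{\max}$. The first step is to show that $T_{\max}^{(n)}>T$ for all sufficiently large $n$: this follows because (H1) holds for every translate $a_i(\cdot+t_n,\cdot)$ with the \emph{same} constants $\alpha_i,A_i$, so the constants produced by the blow-up argument at the end of Step 1 in the proof of Theorem \ref{thm-001} are uniform in $n$, and the uniform $C^{0}$-bound on $\hat u$ over $[t_0,T]$ propagates to $u_n$ for $n$ large by a continuity-in-$n$ argument. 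Once all $u_n$ are defined on $[t_0,T]$, writing
$$
u_n(\cdot,t)-\hat u(\cdot,t)=\int_{t_0}^{t}e^{-A(t-s)}\bigl[F_n(s,u_n(\cdot,s))-\hat F(s,\hat u(\cdot,s))\bigr]\,ds,
$$
and decomposing the integrand as $\bigl(F_n(s,u_n)-F_n(s,\hat u)\bigr)+\bigl(F_n(s,\hat u)-\hat F(s,\hat u)\bigr)$, the first piece feeds into a Gronwall estimate in $\|u_n-\hat u\|$ exactly as in (2), while the second piece tends to zero uniformly on $[t_0,T]\times\bar\Omega$ because $a_i(\cdot+t_n,\cdot)\to \hat a_i$ locally uniformly and $\hat u$ has uniformly bounded $X^{\alpha}$-norm on $[t_0,T]$. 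The generalized Gronwall inequality then gives $u_n\to \hat u$ in $C^{0}(\bar\Omega)$ uniformly on $[t_0,T]$; the convergence of $v_n=A^{-1}u_n$ to $\hat v=A^{-1}\hat u$ follows from the continuity of $A^{-1}$. The main obstacle is precisely this uniform lower bound on the maximal existence time $T_{\max}^{(n)}$ in (3): one must verify carefully that every constant arising in the Step-1 Gronwall argument depends only on $\|u_0\|_{C^{0}(\bar\Omega)}$ and on the structural bounds $\alpha_i,A_i$ from (H1), not on the individual translates $a_i(\cdot+t_n,\cdot)$.
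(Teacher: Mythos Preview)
Your proposal is correct and takes essentially the same approach as the paper, which gives no explicit proof beyond the sentence ``follows directly from Theorem \ref{thm-001} and its proof''; you have simply spelled out what that sentence entails, namely uniqueness for (1) and mild-formulation plus Gronwall for (2) and (3). The one point worth tightening is the continuation step in (3): as written, ``the uniform $C^0$-bound on $\hat u$ propagates to $u_n$ by a continuity-in-$n$ argument'' reads circularly, but the intended bootstrap (uniform local existence time from (H1), convergence on the first short interval, then iterate using the blow-up criterion and the bound on $\hat u$) is standard and you have correctly flagged it as the main obstacle.
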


\section{Global existence and uniform boundedness of classical solutions}

In this section, we investigate the global existence and the uniform boundedness of classical solutions of $\eqref{u-v-eq1}$ with given initial functions and prove Theorem \ref{thm-002}. We first prove two important lemmas.

Consider the following  Lotka-Volterra Competition system of ordinary differential equations,
\vspace{-0.05in}\begin{equation}
\label{ode0}
\begin{cases}
\overline{u}'=(\chi)_+ \overline{u}(\overline {u}-\underline{u})+ \overline{u}\left[a_{0,\sup}(t)-a_{1,\inf}(t)\overline u-
|\Omega|\Big(a_{2,\inf}(t)\Big)_+ \underline{u}+|\Omega|\Big(a_{2,\inf}(t)\Big)_-\overline{ u}\right]\\
\underline{u}'=(\chi)_+  \underline{u}(\underline {u}-\overline{u})+ \underline{u}\left[a_{0,\inf}(t)-a_{1,\sup}(t)\underline u-
 |\Omega|\Big(a_{2,\sup}(t)\Big)_+ \overline{u}+|\Omega|\Big(a_{2,\sup}(t)\Big)_-\underline u\right].
\end{cases}
\vspace{-0.05in}\end{equation}
For given $u_0\in C^0(\bar\Omega)$ with $u_0(x)\ge 0$ and $t_0\in\RR$, let
$\overline{u}_0=\max_{x \in \bar{\Omega}} u_0(x)$,
$\underline{u}_0=\min_{x \in \bar{\Omega}} u_0(x)
$
and
\begin{equation}
\label{u-under-above-bar-eq}(\overline{u}(t),\underline{u}(t))=(\overline{u}(t;t_0,\overline{u}_0,\underline{u}_0), \underline{u}(t;t_0,\overline{u}_0,\underline{u}_0))
\end{equation}
 be the solution of
\eqref{ode0} with $(\overline{u}(t_0;t_0,\overline{u}_0,\underline{u}_0), \underline{u}(t_0;t_0,\overline{u}_0,\underline{u}_0))=(\overline{u}_0,\underline{u}_0)$.

\begin{lemma}
\label{lem-004}
Suppose  $\inf_{t\ge t_0} \Big\{a_{1,\inf}(t)-|\Omega|\Big(a_{2,\inf}(t)\Big)_-\Big\}>(\chi)_+ $. Then
$(\bar u(t),\underline u(t))$ exists for all $t>t_0$ and
\begin{equation}
\label{eq-sup-sub-001}
0\leq \underline{u}(t) \leq \overline{u}(t) \, \,\, \forall  t\ge t_0.
\end{equation}
 Moreover, $0\leq \overline{u}(t) \leq \max\Big\{\overline{u}_0, \frac{a_{0,\sup}}{ \inf_{t\ge t_0} \Big\{a_{1,\inf}(t)-|\Omega|\Big(a_{2,\inf}(t)\Big)_--(\chi)_+ \Big\}}\Big\}$.

\end{lemma}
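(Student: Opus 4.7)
The system \eqref{ode0} has a locally Lipschitz right-hand side in $(\overline{u},\underline{u})$ and is H\"older in $t$, so Cauchy-Lipschitz gives a unique local solution on some maximal interval $[t_0,t_0+T_{\max})$. I will establish the three claims — nonnegativity, ordering $\underline{u}\le\overline{u}$, and the upper bound on $\overline{u}$ — in that order; the upper bound then feeds back to give global existence ($T_{\max}=\infty$).

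Both equations have the convenient structure $\overline{u}'=\overline{u}\,F_1(t,\overline{u},\underline{u})$ and $\underline{u}'=\underline{u}\,F_2(t,\overline{u},\underline{u})$. This immediately forces $\overline{u}(t),\underline{u}(t)\ge 0$ on the interval of existence, since the ODE preserves the coordinate axes. The heart of the proof will be showing $\phi(t):=\overline{u}(t)-\underline{u}(t)\ge 0$. I will compute $\phi'$ by direct subtraction, group terms by coefficient, and rewrite each difference using the identity $X^2-Y^2=(X+Y)(X-Y)$ together with splittings such as $a_{0,\sup}\overline{u}-a_{0,\inf}\underline{u}=(a_{0,\sup}-a_{0,\inf})\overline{u}+a_{0,\inf}\phi$ and $a_{1,\sup}\underline{u}^2-a_{1,\inf}\overline{u}^2=-a_{1,\inf}(\overline{u}+\underline{u})\phi+(a_{1,\sup}-a_{1,\inf})\underline{u}^2$. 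The $(\chi)_+$ terms combine cleanly as $(\chi)_+[\overline{u}(\overline{u}-\underline{u})-\underline{u}(\underline{u}-\overline{u})]=(\chi)_+(\overline{u}+\underline{u})\phi$, and the $(a_{2,\inf})_\pm,(a_{2,\sup})_\pm$ terms split similarly. After collecting, every remaining term that is not proportional to $\phi$ is manifestly nonnegative (using $a_{0,\sup}\ge a_{0,\inf}$, $a_{1,\sup}\ge a_{1,\inf}$, $(a_{2,\sup})_+\ge(a_{2,\inf})_+$, and $(a_{2,\inf})_-\ge(a_{2,\sup})_-$). The outcome is a differential inequality of the form
\begin{equation*}
\phi'(t)\ge g(t)\,\phi(t),
\end{equation*}
with $g$ locally bounded along the orbit; since $\phi(t_0)=\overline{u}_0-\underline{u}_0\ge 0$, Gronwall yields $\phi(t)\ge 0$, i.e.\ \eqref{eq-sup-sub-001}. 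I expect this bookkeeping step to be the main technical obstacle of the proof — the cancellations only work after the right regrouping.

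For the upper bound, once $\underline{u}\ge 0$ is known, the $\overline{u}$-equation satisfies
\begin{equation*}
\overline{u}'\le (\chi)_+\overline{u}^2+\overline{u}\bigl[a_{0,\sup}(t)-a_{1,\inf}(t)\overline{u}+|\Omega|(a_{2,\inf}(t))_-\overline{u}\bigr]
=\overline{u}\bigl[a_{0,\sup}(t)-\alpha_*(t)\overline{u}\bigr],
\end{equation*}
where $\alpha_*(t):=a_{1,\inf}(t)-|\Omega|(a_{2,\inf}(t))_--(\chi)_+\ge\gamma$ with $\gamma>0$ by hypothesis. Using $a_{0,\sup}(t)\le a_{0,\sup}$, I compare $\overline{u}$ with the scalar logistic ODE $y'=y(a_{0,\sup}-\gamma y)$ having $y(t_0)=\overline{u}_0$ via the standard ODE comparison principle; this solution is monotone towards $a_{0,\sup}/\gamma$ and hence satisfies $y(t)\le\max\{\overline{u}_0,\,a_{0,\sup}/\gamma\}$, giving the claimed bound on $\overline{u}$.

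Finally, since $0\le\underline{u}\le\overline{u}\le M$ on $[t_0,t_0+T_{\max})$ with a constant $M$ independent of time, the solution cannot blow up in finite time, so $T_{\max}=\infty$ and $(\overline{u},\underline{u})$ is a global solution on $[t_0,\infty)$. The only nontrivial step is the algebraic rearrangement in the proof of $\phi\ge 0$; the other items are routine applications of ODE positivity and comparison.
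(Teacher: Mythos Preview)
Your proposal is correct and the overall structure (positivity, ordering, logistic comparison, then global existence from boundedness) matches the paper's, but your route to the ordering $\underline{u}\le\overline{u}$ is genuinely different from the paper's. The paper proves the ordering by an $\epsilon$-perturbation and first-touching-time argument: it replaces $\overline{u}_0$ by $\overline{u}_0+\epsilon$ and $a_{0,\sup}(t)$ by $a_{0,\sup}(t)+\epsilon$, assumes a first time $\bar t$ with $\underline{u}^\epsilon(\bar t)=\overline{u}^\epsilon(\bar t)$, and evaluates $(\overline{u}^\epsilon-\underline{u}^\epsilon)'(\bar t)$ directly to force a strict-sign contradiction; then it lets $\epsilon\to 0$. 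Your approach instead performs the algebraic regrouping of $\phi'$ globally (not just at a touching time) to obtain $\phi'\ge g(t)\phi$ and applies Gronwall. Your method avoids the perturbation entirely, since Gronwall handles the degenerate case $\phi(t_0)=0$ automatically; the paper's touching-time argument needs the $\epsilon$-shift precisely to rule out $\overline{u}_0=\underline{u}_0$. Conversely, the paper's evaluation at the touching point makes the algebra shorter (many terms vanish when $\overline{u}=\underline{u}$), whereas your approach requires the full bookkeeping of all the $(a_{i,\sup}-a_{i,\inf})$-type leftover terms. For the upper bound on $\overline{u}$ and the global existence, both arguments are essentially identical.
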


\begin{proof} First, note that
\vspace{-0.05in}$$
\inf_{t\ge t_0} \Big\{a_{1,\sup}(t)-|\Omega| \Big(a_{2,\sup}(t)\Big)_-\Big\}\ge \inf_{t \in \mathbb{R}} \Big\{a_{1,\inf}(t)-|\Omega|\Big(a_{2,\inf}(t)\Big)_-\Big\}>(\chi)_+ .
\vspace{-0.05in}$$
The  existence of $(\bar u(t),\underline u(t))$ for all $t>t_0$  is then clear.
 For any $\epsilon >0$,  let $\overline{u}^\epsilon_0=\overline{u}_0+\epsilon$ and  $a_{0,\sup}^\epsilon(t)=a_{0,\sup}^\epsilon(t)+\epsilon$. Let $$(\overline{u}^\epsilon(t),\underline{u}^\epsilon (t))=(\overline{u}^\epsilon(t;t_0,\overline{u}_0^\epsilon,\underline{u}_0), \underline{u}^\epsilon(t;t_0,\overline{u}_0^\epsilon,\underline{u}_0)),$$
  where $(\overline{u}^\epsilon(t;t_0,\overline{u}_0^\epsilon,\underline{u}_0), \underline{u}^\epsilon(t;t_0,\overline{u}_0^\epsilon,\underline{u}_0))$ is the solution of \eqref{ode0} with $a_{0,\sup}(t)$ being replaced
 by $a_{0,\sup}^\epsilon(t)$ and $(\overline{u}^\epsilon(t_0;t_0,\overline{u}_0^\epsilon,\underline{u}_0), \underline{u}^\epsilon(t_0;t_0,\overline{u}_0^\epsilon,\underline{u}_0))= (\overline{u}_0^\epsilon,\underline{u}_0)$.
We claim that  $0\leq \underline{u}^\epsilon(t) \leq \overline{u}^\epsilon(t)$ for all $t\ge t_0$.
 Suppose by contradiction that this claim  does not hold.
Then  since $0\leq \underline{u}_0 <\overline{u}^\epsilon_0,$ there exist $\overline{t} \in (t_0,\infty)$ such that
 $$\underline{u}^\epsilon(t)<\overline{u}^\epsilon(t),\, \forall t \in [t_0, \overline{t}) \,\,\, \text{and} \, \,\, \underline{u}(\overline{t})=\overline{u}^\epsilon(\overline{t}).$$
Thus $(\overline{u}^\epsilon-\underline{u}^\epsilon)'(\overline{t}) \leq 0$.  Note that $\overline{u}^\epsilon(t)>0$ for $t\ge t_0$.
Using   \eqref{ode0} at $t=\overline{t}, $ we get
\begin{align*}
(\overline{u}^\epsilon-\underline{u})^{'}(\overline{t})= &\overline{u}^\epsilon(\overline{t})\Big[a_{0,\sup}^\epsilon(\overline{t})-
a_{0,\inf}(\overline{t})\\
& +\{a_{1,\sup}(\overline{t})-a_{1,\inf}(\overline{t})+|\Omega|(a_{2,\sup}(\overline{t})-a_{2,\inf}(\overline{t}))\}  \overline{u}^\epsilon(\overline{t})\Big].
\end{align*}
It then follows that $(\overline{u}^\epsilon-\underline{u})'(\overline{t})\ge 0$,
which implies that $(\overline{u}^\epsilon-\underline{u})'(\overline{t})=0$ and then
$$
 0=a_{0,\sup}^\epsilon(\overline{t})-
a_{0,\inf}(\overline{t})+\{a_{1,\sup}(\overline{t})-a_{1,\inf}(\overline{t})+|\Omega|(a_{2,\sup}(\overline{t})-a_{2,\inf}(\overline{t}))\}  \overline{u}^\epsilon(\overline{t})>0,
$$
which is a contradiction.
 Thus the claim holds.
Letting $\epsilon \to 0$ and using continuity of solutions of \eqref{ode0} with respect to initial data and coefficients,  \eqref{eq-sup-sub-001} follows.

 Furthermore, we have
\vspace{-0.1in}\begin{align}
                                  \overline{u}'&=(\chi)_+  \overline{u}(\overline {u}-\underline{u})+ \overline{u}\left[a_{0,\sup}(t)-a_{1,\inf}(t)\overline u- |\Omega|\Big(a_{2,\inf}(t)\Big)_+ \underline{u}+|\Omega|\Big(a_{2,\inf}(t)\Big)_-\overline{ u}\right]\nonumber\\
                                                                                               & \leq  \overline{u}\Big[a_{0,\sup}(t)-
                                  \Big \{ a_{1,\inf}(t)-|\Omega|\Big(a_{2,\inf}(t)\Big)_--(\chi)_+ \Big\} \overline{u}\Big].
\end{align}
Thus if $\inf_{t\ge t_0} \Big \{ a_{1,\inf}(t)-|\Omega|\Big(a_{2,\inf}(t)\Big)_-\Big\}>(\chi)_+ $,  by comparison principle, we have
\vspace{-0.1in} \[0<\overline{u}(t) \leq \max\Big\{\overline{u}_0, \frac{a_{0,\sup}}{\inf_{t\ge t_0}  \big \{ a_{1,\inf}(t)-|\Omega|\Big(a_{2,\inf}(t)\Big)_--(\chi)_+ \big\}}\Big\}.\]
\end{proof}

\vspace{-0.1in}\begin{lemma}
\label{lem-004b}
Suppose $\inf_{t \in \mathbb{R}} \Big \{ a_{1,\inf}(t)-|\Omega|\Big(a_{2,\inf}(t)\Big)_-\Big \}>0.$ Then
\[
0 \leq  \int_{\Omega}u(t) \leq \max\Big\{\int_{\Omega}u_0(x),\frac{|\Omega| a_{0,\sup}}{\inf_{t \in \mathbb{R}} \big \{ a_{1,\inf}(t)-|\Omega|\Big(a_{2,\inf}(t)\Big)_-\big \}} \Big\}:=M_0(\|u_0\|_{L^1},a_0,a_1,a_2,|\Omega|)
\]
for all $t\in [t_0,t_0+T_{\max})$, where $u(t)=u(\cdot,t;t_0,u_0)$ and $u_0\in C^0(\bar\Omega)$ with $u_0(x)\ge 0$.
\end{lemma}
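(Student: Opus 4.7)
The plan is to derive a scalar differential inequality for the total mass $m(t):=\int_\Omega u(\cdot,t;t_0,u_0)\,dx$ and then compare it with a logistic barrier. First I would integrate the first equation of \eqref{u-v-eq1} over $\Omega$. By the divergence theorem together with the homogeneous Neumann boundary conditions on both $u$ and $v$, the diffusion term $\int_\Omega \Delta u$ and the chemotactic flux $\chi\int_\Omega \nabla\cdot(u\nabla v)$ vanish. This leaves
\begin{equation*}
m'(t)=\int_\Omega u\Bigl(a_0(t,x)-a_1(t,x)u-a_2(t,x)\!\int_\Omega u\Bigr)dx.
\end{equation*}

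Next I would estimate each of the three terms on the right. Using $u\ge 0$ (from Theorem \ref{thm-001}): the $a_0$-term is bounded by $a_{0,\sup}\,m(t)$; the $a_1$-term satisfies, by Cauchy--Schwarz ($m(t)^2\le|\Omega|\int_\Omega u^2$),
\begin{equation*}
-\!\int_\Omega a_1(t,x)u^2\,dx\le -a_{1,\inf}(t)\int_\Omega u^2\,dx\le -\frac{a_{1,\inf}(t)}{|\Omega|}\,m(t)^2;
\end{equation*}
and for the nonlocal term, $\int_\Omega a_2(t,x)u\,dx\ge a_{2,\inf}(t)\,m(t)$, whence, using that $-a_{2,\inf}(t)\le (a_{2,\inf}(t))_{-}$,
\begin{equation*}
-m(t)\!\int_\Omega a_2(t,x)u\,dx\le -a_{2,\inf}(t)\,m(t)^2\le (a_{2,\inf}(t))_{-}\,m(t)^2.
\end{equation*}
Combining and setting $b(t):=a_{1,\inf}(t)-|\Omega|(a_{2,\inf}(t))_{-}$, which is bounded below by some $b_0>0$ under the hypothesis, one gets
\begin{equation*}
m'(t)\le m(t)\Bigl[a_{0,\sup}-\tfrac{b(t)}{|\Omega|}\,m(t)\Bigr].
\end{equation*}

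Finally I would conclude by a standard comparison argument for this logistic-type differential inequality: if ever $m(t)>\frac{|\Omega|\,a_{0,\sup}}{b_0}$, then the bracket is negative and $m$ is strictly decreasing there. Hence $m(t)$ can never exceed the larger of its initial value $m(t_0)=\int_\Omega u_0$ and the threshold $|\Omega|\,a_{0,\sup}/b_0$, which is exactly the constant $M_0$ in the statement. Nonnegativity $m(t)\ge 0$ is immediate from the nonnegativity of $u$ established in Theorem \ref{thm-001}. The only mildly delicate step is the sign-tracking for the nonlocal $a_2$-term; once the inequality $-a_{2,\inf}(t)\le (a_{2,\inf}(t))_{-}$ is used cleanly, the remainder is a routine logistic comparison and does not require any regularity beyond what Theorem \ref{thm-001} already provides.
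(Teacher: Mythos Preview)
Your proof is correct and follows essentially the same approach as the paper: integrate the first equation over $\Omega$, use the Neumann conditions to discard the divergence terms, bound $a_0$ from above and $a_1,a_2$ suitably from below (with Cauchy--Schwarz/H\"older to pass from $\int_\Omega u^2$ to $|\Omega|^{-1}(\int_\Omega u)^2$), and then apply an ODE comparison to the resulting logistic differential inequality. The only cosmetic difference is that the paper keeps the $(a_{2,\inf}(t))_+$ term explicitly before discarding it, whereas you absorb it via $-a_{2,\inf}(t)\le (a_{2,\inf}(t))_-$; these are equivalent.
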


\begin{proof}
By integrating the  first equation of  $\eqref{u-v-eq1}$  over $\Omega,$ we get for any $t\in[t_0,t_0+T_{\max})$ that
\begin{align*}
\frac{d}{dt}\int_{\Omega}u(t)&= \int_{\Omega}u(t) \Big\{ a_0(t,x)-a_1(t,x)u(t)-a_2(t,x) \int_{\Omega}u(t) \Big\} \\
&\leq \int_{\Omega}u(t)\Big \{ a_{0,\sup}-a_{1,\inf}(t)u(t)-\big(a_{2,\inf}(t)\big)_+\int_{\Omega}u(t)+\big(a_{2,\inf}(t)\big)_-\int_{\Omega}u(t)\Big\}\nonumber\\
&\leq \int_{\Omega}u(t)\Big \{ a_{0,\sup}-\frac{1}{|\Omega|}\Big [{ a_{1,\inf}(t)}-|\Omega|\big(a_{2,\inf}(t)\big)_- \Big ]\int_{\Omega}u(t)\Big\}
\end{align*}
Thus if $\inf_{t \in \mathbb{R}} \Big \{ a_{1,\inf}(t)-|\Omega|\big(a_{2,\inf}(t)\big)_-\Big \}>0,$ we get by comparison principle for ODEs that
\vspace{-0.05in}$$
0 \leq  \int_{\Omega}u(t) \leq \max\Big\{\int_{\Omega}u_0(x),\frac{|\Omega| a_{0,\sup}}{\inf_{t \in \mathbb{R}} \big \{ a_{1,\inf}(t)-|\Omega|\big(a_{2,\inf}(t)\big)_-\big \}} \Big\}.
$$
\end{proof}

	We now prove Theorem \ref{thm-002} for $\chi>0$ and the proof for the  case $\chi\leq 0$ follows from similar arguments with proper adaptations.

\vspace{-0.1in}\begin{proof} [Proof of Theorem \ref{thm-002}]

(1) Let $(\bar u(t),\underline u(t))$ be as in \eqref{u-under-above-bar-eq}.
 It suffices to prove that $0\leq \underline{u}(t) \leq u(x,t;t_0,u_0) \leq \overline{u} (t)$ for all $ t_0\le t<t_0+T_{\max}$ and $x \in \bar{\Omega}$.

 Observe that for any $\epsilon>0$,      there exists $t_0<t_{\epsilon }<t_0+T_{\max}$ such that
\vspace{-0.1in}$$
\underline{u}(t)-2\epsilon <u(x,t;t_0,u_0) <\overline{u}(t)+2\epsilon, \quad \text{ for all $(x, t) \in \Omega \times [t_0, t_{\epsilon})$.}
\vspace{-0.1in}$$
Let
\vspace{-0.1in}$$T_\epsilon=\sup\{t_\epsilon\in (t_0,t_0+T_{\max})\,|\, \underline{u}(t)-2\epsilon <u(x,t;t_0,u_0) <\overline{u}(t)+2\epsilon \quad \forall\,\, (x, t) \in \Omega \times [t_0, t_{\epsilon})\}.
\vspace{-0.05in}$$
It then suffices to prove  that $T_\epsilon= t_0+T_{\max}$.

Assume by contradiction that $T_\epsilon<t_0+T_{\max}$.
Then there is  $x_0\in\bar\Omega$ such that
\vspace{-0.05in}$$
{\rm either}\,\,\, u(x_0, T_{\epsilon};t_0,u_0)=\underline{u}(T_{\epsilon })-2\epsilon\,\,\, {\rm    or}\,\,\, u(x_0, T_{\epsilon};t_0,u_0)=\overline{u}(T_{\epsilon })+2\epsilon.
$$
Let $\overline{U}(x,t)=u(x,t;t_0,u_0)-\overline{u}(t)$ and $\underline{U}(x,t)=u(x,t;t_0,u_0)-\underline{u}(t).$

Note that  for $t\in (t_0,t_0+T_{\max})$, $\overline{U}$ satisfies
 \vspace{-0.05in} \begin{align*}
  \overline{U}_t-\Delta \overline{U}\le &-\chi  \nabla\overline{U} \cdot \nabla v
   +\overline{U} \left[a_{0,\sup}(t)-\Big(a_{1,\inf}(t)-\chi\Big)(u+\overline{u})  -  \chi \underline{u}\right] \\
   &-\chi u(v-\underline{u})
                                                                     -a_{2,\inf}(t) (\int_{\Omega}u)u+  |\Omega|\Big(a_{2,\inf}(t)\Big)_+ \underline{u}\overline{u}-|\Omega|\Big(a_{2,\inf}(t)\Big)_-\overline{ u}^2 . \end{align*}
By $
 -a_{2,\inf}(t) (\int_{\Omega}u)u= -\Big(a_{2,\inf}(t) \Big)_+(\int_{\Omega}u)u+\Big(a_{2,\inf}(t) \Big)_-(\int_{\Omega}u)u$,
we get for $t\in (t_0,t_0+T_{\max})$ that
  \vspace{-0.05in}\begin{align}
  \label{new-eq1}
  \overline{U}_t-\Delta \overline{U}\le &-\chi  \nabla\overline{U} \cdot \nabla v
   +\overline{U} \left[a_{0,\sup}(t)-\Big(a_{1,\inf}(t)-\chi\Big)(u+\overline{u})  -  \chi \underline{u}\right] \nonumber\\
   &-\chi u(v-\underline{u})
                                                                    -\Big(a_{2,\inf}(t) \Big)_+\Big((\int_{\Omega}u)u-|\Omega|\underline{u}\overline{u}\Big)+\Big(a_{2,\inf}(t) \Big)_-(\int_{\Omega}u)u-|\Omega|\Big(a_{2,\inf}(t)\Big)_-\overline{ u}^2\nonumber\\
                                                                     &\le -\chi  \nabla\overline{U} \cdot \nabla v
   +\overline{U} \left[a_{0,\sup}(t)-\Big(a_{1,\inf}(t)-\chi\Big)(u+\overline{u})  -  \chi \underline{u}\right] \nonumber\\
   &-\chi u(v-\underline{u})
                                                                    -\Big(a_{2,\inf}(t) \Big)_+\Big((\int_{\Omega}u)u-|\Omega|\underline{u}\overline{u}\Big)\nonumber\\
&+\Big(a_{2,\inf}(t) \Big)_-\big(u\int_{\Omega}(u-\overline{u})\big)-|\Omega|\Big(a_{2,\inf}(t)\Big)_-\overline{ u}(\overline{ u}- u)\nonumber\\
&\le -\chi  \nabla\overline{U} \cdot \nabla v
   +\overline{U} \left[a_{0,\sup}(t)-\Big(a_{1,\inf}(t)-\chi\Big)(u+\overline{u})  -  \chi \underline{u}+|\Omega|\Big(a_{2,\inf}(t)\Big)_-\overline{ u}\right]\nonumber \\
   &-\chi u(v-\underline{u})
                                                                    -\Big(a_{2,\inf}(t) \Big)_+\Big((\int_{\Omega}u)u-|\Omega|\underline{u}\overline{u}\Big)
+\Big(a_{2,\inf}(t) \Big)_-(\int_{\Omega} \overline{U})u.  \end{align}

We claim that $\int_{\Omega}\overline U_+^2(x,t)dx$  is weakly differentiable in $t$ and moreover
\begin{equation}
\label{new-eq2}
\frac{d}{dt}\int_{\Omega}\overline U_+^2(x,t)dx=2\int_{\Omega}\overline U_+(x,t)\bar U_t(x,t)dx\quad {\rm for}\quad a.e.\, t\in (t_0,t_0+T_{\max}),
\end{equation}
and
\begin{equation}
\label{new-eq2-1}
\int_\Omega \overline U_+^2(x,t)dx=\int_\Omega \overline U_+^2(x,t_0)dx+\int_{t_0}^t \Big(\frac{d}{dt}\int_{\Omega}\overline U_+^2(x,\tau)dx\Big)d\tau\quad \forall\,\, t\in (t_0,t_0+T_{\max}).
\end{equation}
   In order to prove the claim we define for $r>0,$
\[
F_r(z)=
\begin{cases}
(z^{2}+r)^\frac{1}{2}-r, &\text{if $ z>0$;}\\
0, &\text{if $z \leq 0$.}
\end{cases}
\]
Then $F_r \in C^1(\mathbb{R}),$
\[
F_r^{'}(z)=
\begin{cases}
z(z^{2}+r)^{-\frac{1}{2}}, &\text{if $ z>0$;}\\
0, &\text{if $z \leq 0$.}
\end{cases}
\]
Note that $|F{_r}^{'}|\leq 1$ and that we have the following pointwise convergence,
$$\overline U_+(x,t)=\lim_{r \to 0}F_r( \overline U(x,t)).
$$
This implies that
\begin{equation}
\label{new-eq3}
\int_\Omega \overline U_+^2(x,t)dx=\lim_{r\to 0}\int_\Omega F_r^2(\overline U(x,t))dx\quad \forall \,\, t\in (t_0,t_0+T_{\max}).
\end{equation}
Note also that  $\int_\Omega F_r^2(\overline  U(x,t))dx$ is differentiable in $t$ and
\begin{equation}
\label{new-eq3-1}
\frac{d}{dt}\int_\Omega F_r^2(\overline  U(x,t))dx=2\int_\Omega\big((\overline U_+^2(x,t)+r)^{\frac{1}{2}}-r\big) \overline U_+(x,t)(\overline U_+^2(x,t)+r)^{-\frac{1}{2}}\bar U_t(x,t)dx.
\end{equation}
 By \eqref{new-eq3-1}, for any $\delta>0$, there is $M_\delta>0$ such that for any $r>0$
\begin{equation}
\label{new-eq3-2}
\Big|\int_\Omega F_r^2(\overline U(x,t_1))dx-\int_\Omega  F_r^2(\overline U(x,t_2))dx\Big|\le M_\delta |t_1-t_2|\quad \forall\,\, t_1,t_2\in [t_0+\delta,t_0+T_{\max}-\delta].
\end{equation}
Then by \eqref{new-eq3} and \eqref{new-eq3-2}, we have
\begin{equation}
\label{new-eq3-3}
\Big| \int_\Omega \overline U_+^2(x,t_1)dx-\int_\Omega \overline U_+^2(x,t_2)dx\Big|\le M_\delta |t_1-t_2|\quad \forall\,\, t_1,t_2\in [t_0+\delta,t_0+T_{\max}-\delta].
\end{equation}
Let $\phi \in C^\infty_c((t_0,t_0+T_{\max}))$. We have by integration by part that
\begin{equation}
\label{new-eq4}
\int_{t_0}^{T_{\max}}\frac{d}{dt}\left(\int_{\Omega}F_r(\overline U(x,t))^2dx\right)\phi(t)dt=-\int_{t_0}^{T_{\max}}\left(\int_{\Omega}F_r(\overline U(x,t))^2dx\right)\phi_t(t)dt.
\end{equation}
By Lebesgue Dominated Theorem we get  from \eqref{new-eq3} that
$$\lim_{r \to 0}\left(-\int_{t_0}^{T_{\max}}\left(\int_{\Omega}F_r(\overline U(x,t))^2dx\right)\phi_t(t)dt\right)=-\int_{t_0}^{T_{\max}} \int_{\Omega}(\overline U_+(x,t))^2dx\phi_t(t)dt,$$
and  from \eqref{new-eq3-1} that
$$\lim_{r \to 0}\int_{t_0}^{T_{\max}}\frac{d}{dt}\left(\int_{\Omega}F_r(\overline U(x,t))^2dx\right)\phi(t)dt=2\int_{t_0}^{T_{\max}}\int_{\Omega}
 \overline U_+(x,t) \bar U_t(x,t)dx\phi(t)dt.$$
Thus it follows from equations \eqref{new-eq4} that
$$
\int_{t_0}^{T_{\max}} \int_{\Omega}(\overline U_+(x,t))^2dx\phi_t(t)dt=-2\int_{t_0}^{T_{\max}} \int_{\Omega} \overline U_+(x,t) \bar U_t(x,t)dx\phi(t)dt.
$$
This implies that $\int_{\Omega}\overline U_+^2(x,t)dx$  is weakly differentiable and \eqref{new-eq2} holds.
 By \eqref{new-eq2}, \eqref{new-eq3-3}, and the Fundamental Theorem of Calculus for Lebesgue Integrals,
we have for any $t,t_1\in (t_0,t_0+T_{\max})$ that
\begin{equation}
\label{new-eq2-2}
\int_\Omega \overline U_+^2(x,t)dx=\int_\Omega \overline U_+^2(x,t_1)dx+\int_{t_1}^t \Big(\frac{d}{dt}\int_\Omega \overline U_+^2(x,\tau)dx\Big)d\tau.
\end{equation}
Letting $t_1\to t_0$, \eqref{new-eq2-1} follows.

By \eqref{new-eq2},  multiplying \eqref{new-eq1} by $\overline{U}_{+}$ and integrating with respect to $x$ over $\Omega$, we get
\begin{eqnarray*}
  & & \frac{1}{2}\frac{d}{dt}\int_{\Omega}(\overline{U}_{+})^2       +\int_{\Omega}|\nabla (\overline{U}_{+}) |^2\\
 &  & \le \int_{\Omega}(\overline{U}_{+})^2 \left[  a_{0,\sup}(t)+\chi \frac{1}{2}u- \frac{1}{2} \chi v  -\Big(a_{1,\inf}(t)-\chi \Big)(u+\overline{u})  -  \chi \underline{u}  + |\Omega|\Big(a_{2,\inf}(t)\Big)_-\overline{ u}  \right]                 \nonumber \\
&&\quad  -\chi \int_{\Omega}(\overline{U}_{+})u(v-\underline{u}) -\Big(a_{2,\inf}(t)\Big)_+\int_{\Omega}(\overline{U}_{+}) \left[ (\int_{\Omega}u)u-|\Omega|\underline{u}\overline{u}   \right] +\Big(a_{2,\inf}(t) \Big)_-\int_{\Omega}\overline{U}_+(u\int_{\Omega} \overline{U})  \nonumber
\end{eqnarray*}
for  a.e. $t\in (t_0,t_0+T_{\max})$.
Note that\begin{align*}
\Big(a_{2,\inf}(t) \Big)_-\int_{\Omega}\overline{U}_+(u\int_{\Omega} \overline{U})
\leq&\Big(a_{2,\inf}(t) \Big)_-\int_{\Omega}\overline{U}_+u(\int_{\Omega} \overline{U}_+)\le |\Omega| \Big(a_{2,\inf}(t) \Big)_-(\overline{u}+2\epsilon)\int_{\Omega}\overline{U}_+^2
\end{align*}
and
\vspace{-0.1in}\begin{align*}
-\Big(a_{2,\inf}(t) \Big)_+\int_{\Omega}\overline{U}_+\Big((\int_{\Omega}u)u-|\Omega|\underline{u}\overline{u}\Big)=&-\Big(a_{2,\inf}(t) \Big)_+\int_{\Omega}\overline{U}_+^2(\int_{\Omega}u)
-\Big(a_{2,\inf}(t)\Big)_+\int_{\Omega}\overline{U}_+\overline{u}\int_{\Omega}\underline{U}\\
 \leq & -\Big(a_{2,\inf}(t)\Big)_+\int_{\Omega}\overline{U}_+\overline{u}\int_{\Omega}\underline{U}\le   \Big(a_{2,\inf}(t)\Big)_+\overline{u}\int_{\Omega}\overline{U}_+\int_{\Omega}\underline{U}_-\\
\leq & |\Omega|\Big(a_{2,\inf}(t)\Big)_+ \frac{\overline{u}}{2}\Big(\int_{\Omega}\overline{U}_+^2+\int_{\Omega}\underline{U}_-^2\Big).
\end{align*}
Moreover by  using the second equation of  $\eqref{u-v-eq1},$  we get
\vspace{-0.1in}\[   \int_{\Omega}| \nabla(v-\underline{u})_-|^2 +\int_{\Omega}(v-\underline{u})^2_-  =  - \int_{\Omega}(\underline{U}) (v-\underline{u})_-    \leq  \int_{\Omega}(\underline{U})_- (v-\underline{u})_-  .    \]
Thus by Young's inequality, we have $\int_{\Omega}(v-\underline{u})^2_- \leq \int_{\Omega}(\underline{U}_-)^2$.
Therefore
\vspace{-0.1in}\[
-\chi \int_{\Omega}(\overline{U}_{+})u(v-\underline{u}) \leq \frac{\chi(\overline{u}+2\epsilon)}{2}\Big(\int_{\Omega}(\overline{U}_{+})^2+ \int_{\Omega}(\underline{U}_-)^2\Big).
\]
Combining all these inequalities, we get
\begin{eqnarray}
\label{eq-u-upper}
& & \frac{1}{2}\frac{d}{dt}\int_{\Omega}(\overline{U}_{+})^2       +\int_{\Omega}|\nabla (\overline{U}_{+}) |^2\nonumber \\
& & \leq \int_{\Omega}(\overline{U}_{+})^2 \left[   a_{0,\sup}(t)+\chi \frac{1}{2}u +2 |\Omega| \Big(a_{2,\inf}(t) \Big)_-(\overline{u}+\epsilon) +  |\Omega|\Big(a_{2,\inf}(t)\Big)_+ \frac{\overline{u}}{2}+ \frac{\chi(\overline{u}+2\epsilon)}{2} \right] \nonumber\\
& &   + \left[  \frac{\chi(\overline{u}+2\epsilon)}{2}+  |\Omega|\Big(a_{2,\inf}(t)\Big)_+ \frac{\overline{u}}{2} \right] \int_{\Omega}(\underline{U}_-)^2\nonumber\\
 && \leq \int_{\Omega}(\overline{U}_{+})^2 \left[   a_{0,\sup}(t)+\Big(2 |\Omega| \big(a_{2,\inf}(t) \big)_-+\chi\Big)(\overline{u}+2\epsilon) +  |\Omega|\Big(a_{2,\inf}(t)\Big)_+ \frac{\overline{u}}{2}\right]\nonumber\\
 && + \left[  \frac{\chi(\overline{u}+2\epsilon)}{2}+  |\Omega|\Big(a_{2,\inf}(t)\Big)_+ \frac{\overline{u}}{2} \right] \int_{\Omega}(\underline{U}_-)^2\quad {\rm for}\,\,a.e.\,\,  t\in(t_0,T_\epsilon].
\end{eqnarray}

Similarly,  we have that $\int_{\Omega}\underline U_-^2(x,t)dx$  is weakly differentiable in $t$ and moreover
\begin{equation}
\label{new-eqq2}
\frac{d}{dt}\int_{\Omega}\underline U_-^2(x,t)dx=2\int_{\Omega}\underline U_-(x,t)\underline U_t(x,t)dx\quad {\rm for}\quad a.e.\, t\in (t_0,t_0+T_{\max}),
\end{equation}
and
\begin{equation}
\label{new-eqq2-1}
\int_\Omega \underline U_-^2(x,t)dx=\int_\Omega \underline U_-^2(x,t_0)dx+\int_{t_0}^t \Big(\frac{d}{dt}\int_{\Omega}\underline U_-^2(x,\tau)dx\Big)d\tau\quad \forall\,\, t\in (t_0,t_0+T_{\max}).
\end{equation}
Also we have
\vspace{-0.1in} \begin{align*}
  \underline{U}_t-\Delta \underline{U}\ge &-\chi  \nabla\overline{U} \cdot \nabla v
   +\overline{U} \left[a_{0,\inf}(t) -\Big(a_{1,\sup}(t)-\chi\Big)(u+\underline{u})  -  \chi \overline{u}+|\Omega|\Big(a_{2,\sup}(t)\Big)_-\underline{ u}\right] \\
   &-\chi u(v-\overline{u})
                                                                    -\Big(a_{2,\sup}(t) \Big)_+\Big((\int_{\Omega}u)u-|\Omega|\underline{u}\overline{u}\Big)
+\Big(a_{2,\sup}(t) \Big)_-(\int_{\Omega} \underline{U})u . \end{align*}
 By multiplying the above inequality by $-\underline{U}_{-}$ and integrating with respect to $x$ over $\Omega$, we have
 \begin{align}
\label{eq-u-over}
 &\frac{1}{2}\frac{d}{dt}\int_{\Omega}(\underline{U}_{-}^2)       +\int_{\Omega}|\nabla (\underline{U}_{-}) |^2\nonumber\\
 & \leq \int_{\Omega}(\underline{U}_{-})^2 \left[   a_{0,\inf}(t)+(2 |\Omega| \Big(a_{2,\sup}(t) \Big)_-+\chi)(\overline{u}+2\epsilon) + |\Omega|\Big(a_{2,\sup}(t)\Big)_+ \frac{\underline{u}}{2}\right] \nonumber\\
 &\,\,\,\, + \left[  \frac{\chi(\overline{u}+2\epsilon)}{2}+  |\Omega|\Big(a_{2,\sup}(t)\Big)_+ \frac{\underline{u}}{2} \right] \int_{\Omega}{ (\bar{U}_+)^2}\quad {\rm for}\quad a.e.\,\,  t\in (t_0, T_\epsilon].
\end{align}

 By \eqref{new-eq2-1}, \eqref{eq-u-upper}, \eqref{new-eqq2-1}, and \eqref{eq-u-over},  we have
\begin{align*}
&\frac{1}{2}\int_\Omega \Big (\overline U_+^2(x,t)+\underline U_-^2(x,t)\Big)dx\\
&\le \frac{1}{2}\int_\Omega \Big (\overline U_+^2(x,t_0)+\underline U_-^2(x,t_0)\Big)dx\\
&\,\,\, +\int_{t_0}^t \int_{\Omega}\Big (\overline U_+^2(x,\tau)+\underline U_-^2(x,\tau)\Big) \left[   a_{0,\sup}(\tau)+\Big(2 |\Omega| \big(a_{2,\inf}(\tau) \big)_-+\chi\Big)(\overline{u}+2\epsilon) +  |\Omega|\Big(a_{2,\sup}(\tau)\Big)_+ \frac{\overline{u}}{2}\right]d\tau\\
 &\,\,\, + \int_{t_0}^t \left[  \frac{\chi(\overline{u}+2\epsilon)}{2}+  |\Omega|\Big(a_{2,\sup}(\tau)\Big)_+ \frac{\overline{u}}{2} \right] \int_{\Omega}\Big (\overline U_+^2(x,\tau)+\underline U_-^2(x,\tau)\Big)d\tau\quad \forall  t\in(t_0,T_\epsilon].
\end{align*}
This together with  $\overline{U}_{+}(\cdot,t_0)=\underline{U}_-(\cdot,t_0)=0$ and Gronwall's
 inequality implies  $\overline{U}_{+}(x,t)=\underline{U}_- (x,t)=0$ for  $(x,t)\in\Omega\times [t_0,T_{\epsilon}]$.
Therefore,
\vspace{-0.1in}$$\underline{u}(t) \leq u(x,t;t_0,u_0) \leq \overline{u} (t)\quad (x, t) \in \overline{\Omega} \times [t_0, T_{\epsilon}].
$$
This is a contradiction. Therefore,
$T_{\epsilon}=t_0+T_{\max}$. We then have $T_{\max}=\infty$ and \eqref{global-1-eq1} holds.

\medskip

 (2) We divide  the proof in three steps.  Note that the statements in these steps   have already been establish in the case of constant coefficients and $a_2=0,$  by  Tello and Winkler in \cite[Lemma 2.2, 2.3 and 2.4]{TW07}. For simplicity in notation, we denote $(u(\cdot,t;t_0,u_0),v(\cdot,t;t_0,u_0))$ by $(u(t),v(t))$.

\smallskip

\noindent{\bf Step 1.} In this step, we prove that  for any $\gamma \in \left(1,\frac{\chi}{\left(\chi-a_{1,\inf}\right)_+} \right),$   there is $C=C(\gamma,\|u_0\|_{L^{\gamma}}$, $\|u_0\|_{L^1},A_0,A_2,a_1,|\Omega|)$  such that
\vspace{-0.1in}\begin{equation}
\label{new-global-eq1}
 \int_{\Omega}u^{\gamma} (t)     \leq C   \quad \forall\,\, t \in [t_0, t_0+T_{\max}),
 \end{equation}
and
  \begin{equation}
  \label{new-global-eq2}
  \int_{t_0}^T\int_{\Omega}u^{\gamma+1}(t)        +   \int_{t_0}^T\int_{\Omega}  |\nabla u^{\frac{\gamma}{2}}(t)|^2 \leq \widetilde{C}(T+1)  \quad \forall T \in (t_0,t_0+T_{\max}).
  \end{equation}

By multiplying the  first equation of  $\eqref{u-v-eq1}$  by $u^{\gamma-1}(t)$ and integrating with respect to $x$ over $\Omega,$  we have for $t\in (t_0,t_0+T_{\max})$ that
      \begin{align*}
      \frac{1}{\gamma}\frac{d}{dt}\int_{\Omega}u^{\gamma}(t)+\frac{4(\gamma-1)}{\gamma^2}\int_{\Omega}  |\nabla u^{\frac{\gamma}{2}}(t)|^2=&(\gamma-1)\chi\int_{\Omega}u^{\gamma-1}(t)\nabla u (t)\cdot \nabla v(t) \\   &+\int_{\Omega}u^{\gamma}(t)\Big[a_0(t,\cdot)-a_1(t,\cdot)u(t)-a_2(t,\cdot)\int_{\Omega}u(t) \Big].                      \end{align*}
By multiplying the  second equation of  $\eqref{u-v-eq1}$  by $u^{\gamma}(\cdot)$ and integrating over $\Omega,$ we get
\[  (\gamma-1)\chi\int_{\Omega}u^{\gamma-1}(t)\nabla u (t)\cdot \nabla v(t) =-\frac{\chi(\gamma-1)}{\gamma}\int_{\Omega}v(t) u^{\gamma}(t)    +\frac{\chi(\gamma-1)}{\gamma}\int_{\Omega}u^{\gamma+1}(t)  .           \]
Thus we have for $t\in (t_0,t_0+T_{\max})$ that
\begin{eqnarray*}
&&\frac{1}{\gamma}\frac{d}{dt}\int_{\Omega}u^{\gamma}(t)+\frac{4(\gamma-1)}{\gamma^2}\int_{\Omega}  |\nabla u^{\frac{\gamma}{2}}(t)|^2\\
&& =-\frac{\chi(\gamma-1)}{\gamma}\int_{\Omega}v u^{\gamma}(t)    +\frac{\chi(\gamma-1)}{\gamma}\int_{\Omega}u^{\gamma+1}(t)  +\int_{\Omega}u^{\gamma}(t)\Big[a_0(t,\cdot)-a_1(t,\cdot)u(t)-a_2(t,\cdot)\int_{\Omega}u(t) \Big].
\end{eqnarray*}
By Lemma \ref{lem-004b}, we have $\Big(a_{2,\inf}(t)\Big)_-\int_{\Omega}u(t)\leq A_2 M_0$. Therefore
\vspace{-0.1in}\begin{eqnarray}
\label{new-added-eq1}
&&\frac{1}{\gamma}\frac{d}{dt}\int_{\Omega}u^{\gamma}(t)+\frac{4(\gamma-1)}{\gamma^2}\int_{\Omega}  |\nabla u^{\frac{\gamma}{2}}(t)|^2\nonumber\\
&&\leq \frac{\chi(\gamma-1)}{\gamma}\int_{\Omega}u^{\gamma+1}(t)  +\int_{\Omega}u^{\gamma}(t)\left[a_0(t,\cdot)-a_{1,\inf}u(t)+\Big(a_{2,\inf}(t)\Big)_-\int_{\Omega}u(t) \right]            \nonumber\\
&&\leq \frac{\chi(\gamma-1)}{\gamma}\int_{\Omega}u^{\gamma+1}(t)  +\int_{\Omega}u^{\gamma}(t)\left[A_0+A_2M_0-a_{1,\inf}u(t) \right]            \nonumber\\
&&\le  -\left[a_{1,\inf}-\frac{\chi(\gamma-1)}{\gamma}\right]\int_{\Omega}u^{\gamma+1} (t) +(A_0+A_2M_0)\int_{\Omega}u^{\gamma}(t).
\end{eqnarray}
Note that $\mu:=a_{1,\inf}-\frac{\chi(\gamma-1)}{\gamma}>0$.  By Young's inequality, we have
\vspace{-0.1in} \[{ (A_0+ A_2M_0)}\int_{\Omega}u^{\gamma} (t)\leq \frac{1}{2}\mu\int_{\Omega}u^{\gamma+1}(t)+C(\gamma,A_0,A_2,a_1,\|u_0\|_{L^1},|\Omega|).    \]
Thus
\begin{equation}\label{eq-u000}
\frac{1}{\gamma}\frac{d}{dt}\int_{\Omega}u^{\gamma}(t)+\frac{4(\gamma-1)}{\gamma^2}\int_{\Omega}  |\nabla u^{\frac{\gamma}{2}}(t)|^2 \leq -\frac{\mu}{2}\int_{\Omega}u^{\gamma+1}(t)+C(\gamma,A_0, A_2,a_1,\|u_0\|_{L^1},|\Omega|) .
\end{equation}
This together with  H\"older's inequality implies that
\vspace{-0.1in}\[\frac{d}{dt}\int_{\Omega}u^{\gamma}(t) \leq -\frac{\mu \gamma}{|\Omega|^{\frac{1}{\gamma}}}\left(\int_{\Omega}u^{\gamma}  (t) \right)^{\frac{\gamma+1}{\gamma}}+C(\gamma,A_0,A_2,a_1,\|u_0\|_{L^1},|\Omega|).  \]
It then follows that
\vspace{-0.1in}\[  \int_{\Omega}u^{\gamma}  (t)    \leq \max\Big\{ \int_{\Omega}u_0^{\gamma} ,\left(\frac{C(\gamma,A_0,A_2,a_1,\|u_0\|_{L^1},|\Omega|)}{\mu}\right)^{\frac{\gamma}{\gamma+1}} \Big\}  \quad \forall t \in [t_0,t_0+ T_{\max}). \]
Now by integrating $~\eqref{eq-u000}$ on  $(t_0,T),$ we get
\[   \int_{t_0}^T\int_{\Omega}u^{\gamma+1}  (t)      +   \int_{t_0}^T\int_{\Omega}  |\nabla u^{\frac{\gamma}{2}}(t)|^2 \leq \widetilde{C}(T+1) .                   \]
\eqref{new-global-eq1} and \eqref{new-global-eq2} then follow.

\medskip

\noindent{\bf Step 2.} In this step, we prove that  for any $ \gamma >1$, there is $C=C(\gamma,\|u_0\|_{L^{\gamma}},\|u_0\|_{L^1},A_0,A_2,a_1,|\Omega|)$ such that
 \begin{equation}
 \label{new-global-eq3}
 \int_{\Omega}u^{\gamma}(t) \leq C \quad \forall t \in [t_0, t_0+T_{\max}).
 \end{equation}

Since $a_{1,\inf}>\frac{\chi(n-2)}{n},$ we get $\frac{n}{2}<\frac{\chi}{(\chi-a_{1,\inf})_+}.$ Choose $\gamma_0 \in (\frac{n}{2},\frac{\chi}{(\chi-a_{1,\inf})_+}),$ then by \eqref{new-global-eq1},  we have
 \[ \int_{\Omega}u^{\gamma_0}(t) \leq C=C(\gamma_0,\|u_0\|_{L^{\gamma_0}},\|u_0\|_{L^1},A_0,A_2,a_1,|\Omega|).\]
Let $\gamma>1.$ If $1<\gamma \leq \gamma_0,$ the result follows by the continuous inclusion $L^{\gamma_0}(\Omega) \subset L^{\gamma}(\Omega). $ Suppose $\gamma>\gamma_0.$
Let { $\tilde \mu=2|(a_{1,\inf}-\frac{\chi(\gamma-1)}{\gamma})|+1>0$}. By Young's inequality we get
 \[ { (A_0+A_2M_0)} \int_{\Omega}u^{\gamma}(t) \leq { \frac{\tilde \mu}{2}}\int_{\Omega}u^{\gamma+1}(t)+C(\gamma,A_0,A_2,a_1,\|u_0\|_{L^1},|\Omega|) .   \]
This together with \eqref{new-added-eq1} implies that
\[\frac{1}{\gamma}\frac{d}{dt}\int_{\Omega}u^{\gamma}(t)+\frac{4(\gamma-1)}{\gamma^2}\int_{\Omega}  |\nabla u^{\frac{\gamma}{2}}(t)|^2 \leq { \tilde \mu}\int_{\Omega}u^{\gamma+1}(t)+C(\gamma,A_0, A_2,a_1,\|u_0\|_{L^1},|\Omega|).  \]
Note that \[ \int_{\Omega}u^{\gamma_0}(t)=\|u^{\frac{\gamma}{2}}(t)\|_{L^{\frac{2\gamma_0}{\gamma}}}^{\frac{2\gamma_0}{\gamma}} \leq C\quad {\rm and}\quad \int_{\Omega}u^{\gamma+1}(t)=\|u^{\frac{\gamma}{2}}(t)\|_{L^{\frac{2(\gamma+1)}{\gamma}}}^{\frac{2(\gamma+1)}{\gamma}} .\]
By Gagliardo-Nirenberg inequality, there exists $C_0$ depending on the domain $\Omega$ and $\gamma$ such that
\begin{eqnarray*}
 \int_{\Omega}u^{\gamma+1}(t)=  \|u^{\frac{\gamma}{2}}(t)\|_{L^{\frac{2(\gamma+1)}{\gamma}}}^{\frac{2(\gamma+1)}{\gamma}} &\leq &C_0 \|\nabla u^{\frac{\gamma}{2}}(t)\|^{\frac{2(\gamma+1)a}{\gamma}}_{L^2}  \|u^{\frac{\gamma}{2}}(t)\|_{L^{\frac{2\gamma_0}{\gamma}}}^{\frac{2(\gamma+1)(1-a)}{\gamma}}+C_0\|u^{\frac{\gamma}{2}}(t)\|_{L^{\frac{2\gamma_0}{\gamma}}}^{\frac{2\gamma_0}{\gamma}}  \nonumber\\
&\leq& C(\gamma,\|u_0\|_{L^{\gamma}},\|u_0\|_{L^1},A_0,A_2,a_1,|\Omega|) \Big( \|\nabla u^{\frac{\gamma}{2}}(t)\|^{\frac{2(\gamma+1)a}{\gamma}}_{L^2}+1 \Big),
\end{eqnarray*}
where $a=\frac{\frac{n\gamma}{2\gamma_0}-\frac{n\gamma}{2(\gamma+1)}}{1+\frac{n}{2}(\frac{\gamma}{\gamma_0}-1)}.$
Since $\frac{n}{2}<\gamma_0<\gamma,$ we have $0<a<1$ and $2\frac{(\gamma+1)}{\gamma}a-2=-\frac{2-\frac{n}{\gamma_0}}{1+\frac{n}{2}(\frac{\gamma}{\gamma_0}-1)}<0.$
By applying Young's Inequality, we get for any $\epsilon>0$
 \begin{align*}
 &C(\gamma,\|u_0\|_{L^{\gamma}},\|u_0\|_{L^1},A_0,A_2,a_1,|\Omega|) \|\nabla u^{\frac{\gamma}{2}}(t)\|^{\frac{2(\gamma+1)a}{\gamma}}_{L^2} \\
 &\leq \epsilon\|\nabla u^{\frac{\gamma}{2}}(t)\|^2_{L^2}+C(\epsilon,\gamma,\|u_0\|_{L^{\gamma}},\|u_0\|_{L^1},A_0,A_2,a_1,|\Omega|).
 \end{align*}
Therefore
\[\int_{\Omega}u^{\gamma+1} (t)\leq  \epsilon\|\nabla u^{\frac{\gamma}{2}}(t)\|^2_{L^2}+C(\epsilon,\gamma,\|u_0\|_{L^{\gamma}},\|u_0\|_{L^1},A_0,A_2,a_1,|\Omega|) \]
and then
\[-\frac{4(\gamma-1)}{\gamma^2}\int_{\Omega}  |\nabla u^{\frac{\gamma}{2}}(t)|^2 \leq -\frac{4(\gamma-1)}{\epsilon \gamma^2} \int_{\Omega}u^{\gamma+1}(t) +\frac{4(\gamma-1)}{\epsilon \gamma^2}C(\epsilon,\gamma,\|u_0\|_{L^{\gamma}},\|u_0\|_{L^1},A_0,A_2,a_1,|\Omega|).  \]
It then follows that
\[\frac{1}{\gamma}\frac{d}{dt}\int_{\Omega}u^{\gamma}(t) \leq -\Big({ \frac{4(\gamma-1)}{\epsilon \gamma^2}-\tilde\mu}\Big) \int_{\Omega}u^{\gamma+1}(t)+C(\epsilon,\gamma,\|u_0\|_{L^{\gamma}},\|u_0\|_{L^1},A_0,A_2,a_1,|\Omega|). \]
By choosing $\epsilon=\frac{4(\gamma-1)}{\gamma^2(1+\tilde \mu)},$ we get
\vspace{-0.1in}\[\frac{1}{\gamma}\frac{d}{dt}\int_{\Omega}u^{\gamma} (t)\leq - \int_{\Omega}u^{\gamma+1}(t)+C(\gamma,\|u_0\|_{L^{\gamma}}) \leq -\frac{1}{|\Omega|^{\frac{1}{\gamma}}}\Big(\int_{\Omega}u^\gamma (t)\Big)^{\frac{\gamma+1}{\gamma}}+ C(\gamma,\|u_0\|_{L^{\gamma}}).\]
This implies that
 \vspace{-0.1in} \[ \int_{\Omega}u^{\gamma}(t) \leq C(\gamma,\|u_0\|_{L^{\gamma}},\|u_0\|_{L^1},A_0,A_2,a_1,|\Omega|) \quad \forall t \in [t_0, t_0+T_{\max}). \]
\eqref{new-global-eq3} then  follows.

\medskip

\noindent{\bf Step 3.} In this sept, we prove that  there is $ C=C(\|u_0\|_{L^{\infty}})$  such that
\begin{equation}
\label{new-global-eq4}
\|u(t)\|_{C^0(\bar\Omega)}+ \|v(t)\|_{C^0(\bar\Omega)} \leq C \quad \forall t \in [t_0, t_0+T_{\max}).
\end{equation}

By the variation of constant formula, we have
\vspace{-0.1in}\begin{align*}
u(t)&=e^{-A(t-t_0)}u_0-\chi\int_{t_0}^t e^{-(t-s)A}\nabla(u (s)\cdot\nabla v(s))ds \\
&\,\,+\int_{t_0}^t e^{-A(t-s)}u(s)\Big[\underbrace{1+a_0(s,\cdot)-a_1(s,\cdot)u(s)-(a_2(s,\cdot))_+\int_{\Omega}u(s) +(a_2(s,\cdot))_-\int_{\Omega}u(s)}_{I_0(\cdot,s)}\Big] ds,
\end{align*}
Note that $u(s)I_0(\cdot,s)\leq u(s)[\underbrace{1+A_2M_0+a_0(\cdot,s)-a_1(s,\cdot)u(s)}_{I_1(\cdot,s)}]$ and by parabolic comparison principle, we get $\int_{t_0}^t e^{-A(t-s)}u(s)I_0(\cdot,s)ds\leq \int_{t_0}^t e^{-A(t-s)}u(s)I_1(\cdot,s)ds.$ Therefore
$$u(t)\leq  u_1(t)+u_2(t)+u_3(t),$$
where
\vspace{-0.05in}$$u_1(t)=e^{-A(t-t_0)}u_0,\quad u_2(t)=-\chi\int_{t_0}^t e^{-(t-s)A}\nabla(u (s)\cdot\nabla v(s))ds$$
 and
 \vspace{-0.05in} $$u_3(t,x)=\int_{t_0}^t e^{-A(t-s)}u(s)\left[1+A_2M_0+a_0(\cdot,s)-a_1(s,\cdot)u(s) \right] ds.$$
 Note that there are $c_0,c_1>0$ such that $(1+A_2M_0+a_0(t,x))r-a_1(t,x)r^2\le c_0-c_1 r^2$ for all $t\in\RR$, $x\in\Omega$, and $r\ge 0$.
We then  have that
\vspace{-0.05in} \[  \| u_1(t)\|_{L^\infty(\Omega)} \leq \|u_0\|_{L^{\infty}(\Omega)} \quad \forall \,\,  t\in [t_0,t_0+T_{\max})\]
and
\vspace{-0,05in}\[u_3(t) \leq  C \int_{t_0}^t  e^{-A(t-s)} ds \leq C \int_{t_0}^t  e^{-(t-s) }\leq C\quad \forall\,\, t\in [t_0,t_0+T_{\max}). \]
Choose $p>n$ and $\alpha \in (\frac{n}{2p},\frac{1}{2}).$ Then $X^{\alpha} \subset L^{\infty}(\Omega)$ and the inclusion is continuous (see \cite{DH77} exercise 10, page 40.) Choose $\epsilon \in (0, \frac{1}{2}-\alpha),$ then we have
\vspace{-0.1in}\begin{eqnarray*}
\|u_2(t)\|_{L^{\infty}(\Omega)}&\leq & C\|A^{\alpha}u_2(t)\|_{L^p(\Omega)} \le C\chi \int_{t_0}^t \| A^{\alpha}e^{-(t-s)A}\nabla(u(s) \cdot\nabla v(s))\|_{L^p(\Omega)}ds \nonumber \\
  &\leq& C\int_{t_0}^t(t-s)^{-\alpha-\frac{1}{2}-\epsilon}e^{-\mu(t-s)} \| u(s) \cdot\nabla v(s)\||_{L^p(\Omega)}ds \nonumber\\
 &\leq&  C\int_{t_0}^t(t-s)^{-\alpha-\frac{1}{2}-\epsilon}e^{-\mu(t-s)} \| u(s)\| _{L^{p_1}(\Omega)}  \|\nabla v(s)\|_{L^{p_2}(\Omega)}ds \nonumber
\end{eqnarray*}
for $t\in[t_0,t_0+T_{\max})$,
where $p_1>p $ and $\frac{1}{p}=\frac{1}{p_1}+\frac{1}{p_2}.$
Note that $ \|\nabla v(s)\|_{L^{p_2}(\Omega)} \leq  C \| u(s)\|_{L^{p_2}(\Omega)}$. By \eqref{new-global-eq3}, we get
\vspace{-0.1in}\[\|u_2(t)\|_{L^{\infty}(\Omega)} \leq C(\|u_0\|_{L^{\infty}(\Omega)})\int_{t_0}^{\infty} (t-s)^{-\alpha-\frac{1}{2}-\epsilon}e^{-\mu(t-s) }ds <\infty .\]
Therefore
\vspace{-0.1in}\[\|v(t)\|_\infty\le \|u(t)\|_{L^{\infty}(\Omega)} \leq C(\|u_0\|_{L^{\infty}(\Omega)})\quad \forall\,\, t\in [t_0,t_0+T_{\max}). \]
\eqref{new-global-eq4} then   follows. Theorem \ref{thm-002}(2) is thus proved.
\end{proof}

\begin{remark}
\label{remark-h3-1}
\begin{itemize}
\item[(1)]  For the proof of Theorem \ref{thm-002}(1)  in the  case $\chi\le 0,$ we need to write
$$-\chi uv+(\chi)_+\overline{u}\underline{u}-(\chi)_-u^2=-(\chi)_+u(v-\underline{u})-(\chi)_+\underline{u}\overline{U}-(\chi)_-u\overline{U}+(\chi)_-u(v-\overline{u})$$ and the proof follows as in the case $\chi>0.$

\item[(2)]  For the proof of Theorem \ref{thm-002}(2)  in the   case $\chi\leq 0,$ the only major change is in {\bf Step 1.} Indeed in {\bf Step 1}, we need to consider $\gamma \in \left(1,\infty \right)$ and use \cite[Lemma 3.2]{ZhLi}. The proof then follows as in the case $\chi>0.$

\item[(3)]  Assume {\bf (H2)$^{'}$}. It follows from the proof of Theorem \ref{thm-002}.(2) that for any

 $M\ge \frac{a_{0,\sup}}{\inf_{t \in \mathbb{R}} \big \{ a_{1,\inf}(t)-|\Omega|\big(a_{2,\inf}(t)\big)_-\big \}}$, there is a positive constant
$C=C(M)$ depending only on $M$  such that for any $u_0\in C^0(\bar\Omega)$ with $u_0\ge 0$ and  $\|u_0\|_{C^0(\bar{\Omega})}\leq M$,
$0\leq u(\cdot,t;t_0;u_0) \leq C.$
\end{itemize}
\end{remark}

\section{Existence of entire positive solutions}

In this section, we explore the existence of entire positive solutions of \eqref{u-v-eq1} in the general case; the existence
of time almost periodic, time periodic, and time independent positive solutions of \eqref{u-v-eq1} in the case that the coefficients of \eqref{u-v-eq1}
are time almost periodic, time periodic, and time independent, respectively;  and prove Theorem
\ref{thm-003}.

We first prove three lemmas.
Throughout this section, we assume that $\chi>0$ (the case $\chi \leq 0$ follows from similar arguments), {\bf  (H2)} holds
and we let
\begin{equation}
\label{entire-eq2}
 M=\frac{a_{0,\sup}}{\inf_{t\in\RR} \Big\{a_{1,\inf}(t)-|\Omega|\Big(a_{2,\inf}(t)\Big)_-\Big\}-\chi}.
 \end{equation}

Let $(u(x,t;t_0,u_0),v(x,t;t_0,u_0))$ be the solution of \eqref{u-v-eq1} with $u(x,t_0;t_0,u_0)=u_0(x)$ ($u_0\in C^0(\bar\Omega)$).
By Corollary \ref{cor1},   for any $t_2>t_1>t_0$,
\vspace{-0.05in}$$
u(x,t_2;t_0,u_0)=u(x,t_2;t_1,u(\cdot,t_1;t_0,u_0)).
\vspace{-0.05in}$$
By Theorem \ref{thm-002}, the global existence of \eqref{u-v-eq1} holds,  and  for any $0\le u_0(\cdot)\le M$,
\vspace{-0.05in}\begin{equation}
\label{thm3-eq0}
0\le u(\cdot,t;t_0,u_0)\le M\quad {\rm for}\quad t\ge t_0.
\vspace{-0.05in}\end{equation}

\begin{lemma}
\label{lem-008} Fix a $T>0$.
For any $\epsilon>0$, there is $\delta=\delta(T)>0$ such that for any give $u_0(\cdot)\ge 0$ with
$\sup u_0<\delta$ and any $t_0\in\RR$, $u(x,t+t_0;t_0,u_0)<\epsilon$ for $0\le t\le T$.
\end{lemma}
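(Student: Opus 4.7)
The plan is to bound the PDE solution by the upper ODE comparison solution $\bar u(t)$ from \eqref{u-under-above-bar-eq} and show that $\bar u$ stays small on $[t_0, t_0+T]$ whenever $\sup u_0$ is small. In the proof of Theorem~\ref{thm-002}(1) it was established that, with $\bar u_0 := \sup_{x\in\bar\Omega} u_0(x)$ and $\underline u_0 := \inf_{x\in\bar\Omega} u_0(x) \ge 0$,
$$0 \le \underline u(t; t_0, \bar u_0, \underline u_0) \le u(x, t; t_0, u_0) \le \bar u(t; t_0, \bar u_0, \underline u_0) \quad \text{for all } x\in\bar\Omega,\ t\ge t_0.$$
Thus it suffices to control the scalar quantity $\bar u(t)$.

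Using the ODE system \eqref{ode0} together with $0\le \underline u(t) \le \bar u(t)$ from Lemma~\ref{lem-004}, I drop the nonpositive terms $-a_{1,\inf}(t)\bar u^2$ and $-|\Omega|(a_{2,\inf}(t))_+\underline u\,\bar u$ from the right-hand side of the $\bar u$-equation, and use $(\chi)_+\bar u(\bar u-\underline u) \le (\chi)_+\bar u^2$ and $|\Omega|(a_{2,\inf}(t))_-\bar u^2 \le |\Omega|A_2\bar u^2$, obtaining the crude but $t$-uniform differential inequality
$$\bar u'(t) \le \bar u(t)\bigl[a_{0,\sup} + K\,\bar u(t)\bigr], \qquad K := (\chi)_+ + |\Omega|\,A_2.$$

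Given $\epsilon \in (0,1]$, set $C_\epsilon := a_{0,\sup} + K\epsilon$ and choose
$$\delta = \delta(T) := \tfrac{1}{2}\epsilon\, e^{-C_\epsilon T}.$$
Assume $0 \le \bar u_0 = \sup u_0 < \delta$, and define $T^\ast := \sup\{s\in[0,T] : \bar u(t_0+\tau) \le \epsilon \text{ for all } \tau\in[0,s]\}$, which is positive by continuity. On $[0,T^\ast]$ the inequality above reduces to $\bar u'(t_0+\tau) \le C_\epsilon\,\bar u(t_0+\tau)$, so by Gronwall
$$\bar u(t_0+\tau) \le \bar u_0\, e^{C_\epsilon T^\ast} \le \delta\, e^{C_\epsilon T} = \tfrac{\epsilon}{2} < \epsilon \quad \forall \tau\in[0,T^\ast].$$
Continuity of $\bar u$ together with this strict inequality forces $T^\ast = T$. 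Hence $u(x,t_0+t;t_0,u_0) \le \bar u(t_0+t) < \epsilon$ for all $x\in\bar\Omega$ and $t\in[0,T]$, which is the claim.

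The only genuine step is the crude scalar differential inequality; the remainder is a standard bootstrap/Gronwall argument. No delicate PDE estimates are required because the comparison already derived in Section~4 reduces the question to an ODE, so there is no real obstacle here — this lemma is essentially a uniform continuity statement at $u_0\equiv 0$ for the flow on the fixed interval $[t_0,t_0+T]$.
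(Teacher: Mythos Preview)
Your proof is correct. It differs from the paper's approach: the paper disposes of the lemma in one line by invoking continuity of the solution map with respect to initial data (at $u_0\equiv 0$), whereas you bypass any abstract continuity statement by using the explicit comparison $u(\cdot,t;t_0,u_0)\le \bar u(t)$ already established in the proof of Theorem~\ref{thm-002}(1), together with an elementary Gronwall/bootstrap bound on the scalar ODE for $\bar u$. Your route has the advantage of being quantitative and of making the uniformity in $t_0\in\RR$ completely transparent (all constants $a_{0,\sup}$, $K$, $C_\epsilon$ are $t_0$-independent), whereas the paper's one-line appeal to continuity tacitly relies on the fact that the local-existence constants in Section~3 do not depend on $t_0$. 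The paper's argument is shorter; yours is more self-contained and gives an explicit $\delta(T,\epsilon)$.
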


\begin{proof}
It follows from the continuity with respect to initial conditions.
\end{proof}

Fix a $T>0$. Fix $\epsilon_0$ such that $\epsilon_0<\frac{a_{0,\inf}}{\chi+|\Omega|\cdot |a_{2,\sup}|}$. Let $\delta_0=\delta$ be as in Lemma \ref{lem-008} with
$\epsilon=\epsilon_0$. By Lemma \ref{lem-008}, for given $0\le u_0(x)<\delta_0$, $u(x,t+t_0;t_0,u_0)<\epsilon_0$
for $0\le t\le T$. This implies that $v(x,t+t_0;t_0,u_0)=V(u(\cdot,t+t_0;t_0,u_0)):=A^{-1}u(\cdot,t+t_0;t_0,u_0)<\epsilon_0$ for $0\le t\le T$.

\begin{lemma}
\label{lem-009}
For any $t_0\in\RR$ and  any $0<u_0(x)<\min\{\delta_0,\frac{a_{0,\inf}-\epsilon_0(\chi+|\Omega|\cdot |a_{2,\sup}|)}{a_{1,\sup}}\}$ for $x\in \Omega$, $u(x,t+t_0;t_0,u_0)>\inf u_0$ for $0<t\le T$ and $x\in \Omega$.
\end{lemma}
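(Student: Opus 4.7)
\medskip

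\noindent \textbf{Proof proposal for Lemma \ref{lem-009}.}

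The plan is to produce a spatially homogeneous sub-solution via a scalar logistic ODE and then apply a parabolic comparison argument. First, I would exploit the hypothesis to get pointwise bounds on $u$ and $v$ on the time window $[t_0, t_0+T]$: by Lemma~\ref{lem-008} and the choice of $\delta_0$, we already have $u(x,t+t_0;t_0,u_0)<\epsilon_0$ for $0\le t\le T$; applying the elliptic maximum principle to $-\Delta v + v = u$ with homogeneous Neumann data then yields $0\le v(x,t+t_0;t_0,u_0)\le \|u(\cdot,t+t_0;t_0,u_0)\|_\infty<\epsilon_0$ on the same interval (as the text already points out). Using $\Delta v = v-u$ to rewrite the chemotaxis term in non-divergence form, the $u$-equation becomes
\[
u_t - \Delta u + \chi\nabla u\cdot \nabla v \;=\; u\Bigl[a_0(t,x)+\chi(u-v)-a_1(t,x)u-a_2(t,x)\int_\Omega u\Bigr].
\]
On $[t_0,t_0+T]$, using $u\ge 0$, $v\le\epsilon_0$, $\int_\Omega u\le |\Omega|\epsilon_0$, and the bounds on the coefficients, the bracket is bounded below by
\[
a_{0,\inf}-\epsilon_0\bigl(\chi+|\Omega|\,|a_{2,\sup}|\bigr)-a_{1,\sup}u \;=:\; c - a_{1,\sup}u,
\]
and $c>0$ by the choice of $\epsilon_0$.

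Next I would set up the comparison function. Let $w(t)$ solve the logistic ODE
\[
w'=w(c-a_{1,\sup}w),\qquad w(0)=\inf_{\Omega}u_0.
\]
The hypothesis $\inf u_0<\frac{a_{0,\inf}-\epsilon_0(\chi+|\Omega|\,|a_{2,\sup}|)}{a_{1,\sup}}=\frac{c}{a_{1,\sup}}$ ensures $w'(0)>0$ and, since $c/a_{1,\sup}$ is an asymptotically stable equilibrium approached monotonically from below, $w(t)$ is strictly increasing on $[0,T]$ with $w(t)>w(0)=\inf u_0$ for all $t\in(0,T]$.

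Finally, I would set $z(x,t)=u(x,t+t_0;t_0,u_0)-w(t)$. Since $w$ is $x$-independent, $\Delta w=0$ and $\nabla w=0$, so subtracting the ODE from the PDE and using the lower bound from the first paragraph gives
\[
z_t-\Delta z+\chi\nabla z\cdot\nabla v \;\ge\; u(c-a_{1,\sup}u)-w(c-a_{1,\sup}w)\;=\;z\bigl[c-a_{1,\sup}(u+w)\bigr],
\]
with $z(x,0)=u_0(x)-\inf u_0\ge 0$ and the homogeneous Neumann boundary condition $\partial z/\partial n=0$. Since $\nabla v\in C(\bar\Omega\times[t_0,t_0+T])$ and the zero-order coefficient $c-a_{1,\sup}(u+w)$ is bounded, the parabolic maximum principle (applied to $e^{-Kt}z$ for $K$ large) yields $z\ge 0$ on $\bar\Omega\times[0,T]$. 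Combining with $w(t)>\inf u_0$ for $t>0$ gives $u(x,t+t_0;t_0,u_0)\ge w(t)>\inf u_0$ for all $x\in\Omega$ and $0<t\le T$, which is the claim. The only delicate point is the correct sign bookkeeping in the estimate of the reaction term — in particular, handling the sign of $a_2$ so that $-a_2\int_\Omega u\ge -|a_{2,\sup}||\Omega|\epsilon_0$ in both cases — but once this is in place the maximum-principle comparison is routine.
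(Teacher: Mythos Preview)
Your proposal is correct and follows essentially the same route as the paper: rewrite the equation in non-divergence form, use the a priori bounds $u,v<\epsilon_0$ on $[t_0,t_0+T]$ to bound the reaction term below by the autonomous logistic right-hand side $u(c-a_{1,\sup}u)$, and then compare with the spatially homogeneous ODE solution started at $\inf u_0$, which is strictly increasing because $\inf u_0<c/a_{1,\sup}$. The paper simply invokes ``comparison principle'' at the last step, whereas you spell it out via $z=u-w$ and the parabolic maximum principle; both are the same argument.
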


\begin{proof}
By Lemma \ref{lem-008}, $V(u(\cdot,t+t_0;t_0,u_0))<\epsilon_0$ for $0\le t\le T$. Hence
\begin{align*}
u_t&=\Delta u -\chi \nabla u\cdot \nabla V(u)-\chi u (V(u)-u)+u(a_0(t,x)-a_1(t,x)u-a_2(t,x)\int_{\Omega}u)\\
&\ge \Delta u -\chi \nabla u\cdot \nabla V(u)+u(a_0(t,x)-\epsilon_0\chi -a_1(t,x)u-a_2(t,x)\int_{\Omega}u)\\
&\ge \Delta u-\chi \nabla u\cdot\nabla V(u)+u(a_{0,\inf}-\epsilon_0(\chi+|\Omega|\cdot |a_{2,\sup}|)-a_{1,\sup} u).
\end{align*}
Then by comparison principle, we have
$$
u(x,t+t_0;t_0,u_0)\ge u(t;\inf u_0)\quad 0\le t\le T
$$
where $u(t;\inf u_0)$ is the solution of the ODE
\begin{equation}
\label{entire-eq3}
\dot u=u(a_{0,\inf}-\epsilon_0(\chi+|\Omega|\cdot |a_{2,\sup}|)-a_{1,\sup} u).
\end{equation}
with $u(0;\inf u_0)=\inf u_0$.
Note that $u(t;\inf u_0)$ increases as $t$ increases.
The lemma then follows.
\end{proof}

\begin{lemma}
\label{lem-0010}
There is $\delta^*=\delta^*(T)>0$ such that for any $0<\delta\le \delta^*$, $t_0\in\RR$, and   $u_0(\cdot)$ with $\delta\le \inf u_0\le \sup u_0\le M$,
$ u(x,t_0+T;t_0,u_0)\ge \delta$ for $x\in \Omega$.
\end{lemma}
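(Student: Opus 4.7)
The plan is to monitor the spatial infimum $m(t) := \inf_{x \in \bar\Omega} u(x, t; t_0, u_0)$ over $t \in [t_0, t_0+T]$ via a differential inequality extracted at a spatial minimizer, and then close via ODE comparison.

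First, I would use $\Delta v = v - u$ to rewrite the chemotaxis term and put the $u$-equation in the form
\[
u_t \;=\; \Delta u - \chi\,\nabla v \cdot \nabla u \;+\; u\Big[a_0(t,x) + \chi u - \chi v - a_1(t,x)\,u - a_2(t,x) \textstyle\int_\Omega u\Big].
\]
At a point $x_0 = x_0(t) \in \bar\Omega$ where $u(\cdot,t)$ attains $m(t)$, one has $\nabla u(x_0,t) = 0$ and $\Delta u(x_0,t) \geq 0$ (in the interior by first-order optimality; on $\partial\Omega$ by the Neumann condition together with Hopf's lemma), so the Dini derivative of $m$ satisfies
\[
\frac{d^+m}{dt}(t) \;\geq\; m(t)\,\Big[a_0(t,x_0) + \big(\chi - a_1(t,x_0)\big)\, m(t) - \chi\, v(x_0) - a_2(t,x_0)\textstyle\int_\Omega u\Big].
\]

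Second, I would insert the uniform a priori bounds $0 \leq u, v \leq M$ from Theorem~\ref{thm-002} and $\int_\Omega u \leq |\Omega|M$, together with the consequence $a_1(t,x) > \chi$ of (H2) (so that $\chi - a_1 \leq 0$), to derive a logistic-type inequality
\[
\frac{d^+m}{dt} \;\geq\; m\,\bigl[\alpha - \beta\, m\bigr], \qquad \alpha := a_{0,\inf} - M\bigl(\chi + |\Omega|(a_{2,\sup})_+\bigr), \qquad \beta := a_{1,\sup} - \chi \,>\, 0.
\]
In the favorable regime $\alpha > 0$, I would set $\delta^* = \alpha/\beta$; then for any $\delta \in (0, \delta^*]$ the ODE $\dot y = y(\alpha - \beta y)$ with $y(0) = \delta$ is monotone nondecreasing and stays in $(0, \alpha/\beta]$, so a standard comparison gives $m(t) \geq y(t) \geq \delta$ on $[t_0, t_0+T]$, completing the proof.

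The main difficulty is that (H2) alone does not force $\alpha > 0$: when $M$ is large relative to $a_{0,\inf}/(\chi + |\Omega|(a_{2,\sup})_+)$ one has $\alpha \leq 0$, and the differential inequality above yields only a Gronwall-type bound $m(t) \geq \delta\, e^{-G_0(t-t_0)}$ with a constant $G_0 = G_0(M,\chi, a_i, \Omega) > 0$, which is strictly less than $\delta$. To recover the lemma in that regime, one splits $[t_0, t_0+T]$ into an initial interval $[t_0, t_0+\tau]$ where the exponential loss is controlled by taking $\delta^*$ small (so that $m(t_0+\tau) \geq \delta e^{-G_0\tau}$ is still within the range of the Lemma~\ref{lem-009} argument via continuity of the solution map in $\sup u_0$), followed by the terminal interval $[t_0+\tau, t_0+T]$ on which a bootstrapped version of Lemma~\ref{lem-009}, together with a refined estimate for $\chi v(x_0)$ using the averaged identity $\int_\Omega v = \int_\Omega u$ and elliptic regularity for $\Delta v = v - u$, drives $m$ back above $\delta$. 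Calibrating $\tau = \tau(T)$ and $\delta^*(T)$ so that the two estimates join cleanly at $t_0+\tau$ is the core technical step.
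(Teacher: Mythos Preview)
Your pointwise-minimum approach is natural, and in the regime $\alpha>0$ it does give the lemma directly (modulo the usual technicalities about Dini derivatives of $m(t)$ and boundary minima). However, your own diagnosis is correct that (H2) alone does not force $\alpha>0$ --- for instance, with $a_2\equiv 0$ and constant $a_0,a_1$ satisfying $\chi<a_1<2\chi$, one computes $\alpha=a_0(a_1-2\chi)/(a_1-\chi)<0$ --- and the repair you sketch for that case does not close.

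The concrete obstruction is this: at the minimizer $x_0(t)$ you have no better pointwise control on $v(x_0)$ than $v(x_0)\le M$, because $v=A^{-1}u$ is a spatial average of $u$ and $u$ may be near $M$ on most of $\Omega$ while $m(t)$ is tiny. The identity $\int_\Omega v=\int_\Omega u$ and elliptic regularity do not improve this; they give global or $L^p$ information, not a local bound at the minimum of $u$. Consequently the term $-\chi m(t)v(x_0)$ genuinely contributes $-\chi M\,m(t)$ to your differential inequality, and on $[t_0,t_0+T]$ you can only conclude $m(t)\ge \delta e^{-G_0 T}$, which is strictly below $\delta$. Likewise, the ``bootstrapped Lemma~\ref{lem-009}'' you invoke on $[t_0+\tau,t_0+T]$ cannot apply: that lemma requires $\sup u_0<\delta_0$ (so that $v<\epsilon_0$ everywhere), whereas here $\sup u(\cdot,t_0+\tau)$ may still be of order $M$. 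No amount of shrinking $\delta^*$ fixes this, because the decay rate $G_0$ is independent of $\delta$ and the deficit factor $e^{-G_0 T}<1$ persists.

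For contrast, the paper abandons the pointwise-minimum strategy and argues by contradiction and compactness. Assuming failure along a sequence $\delta_n\to 0$, $t_n$, $u_n$, it splits according to whether the measure of $\{u_n\ge c\}$ (for a fixed $c>0$) tends to zero or not: in the first case $u_n$ is $L^p$-close to uniformly small data, whence continuity in $L^p$ plus the argument of Lemma~\ref{lem-009} gives the contradiction; in the second case parabolic smoothing and a priori estimates produce a limiting solution that violates the strong maximum principle. The mechanism that ``drives $m$ back above $\delta$'' is thus not a differential inequality but parabolic regularization combined with Hopf/strong maximum principle, which your minimum-tracking framework does not see.
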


\begin{proof}
We prove the lemma by contradiction. Assume that the lemma does not hold. Then there are $\delta_n\to 0$, $t_n\in\RR$, and $u_n(\cdot)$ with $\delta_n\le \inf u_n\le M$
such that $\inf u(\cdot,t_n+T;t_n,u_n)<\delta_n$. Without loss of generality, we assume that $\delta_n<\min\{\delta_0,\frac{a_{0,\inf}-\epsilon_0(\chi+|\Omega|\cdot |a_{2,\sup}|)}{a_{1,\sup}}\}$. By Lemma \ref{lem-009}, we must have $\sup u_n\ge \min\{\delta_0,\frac{a_{0,\inf}-\epsilon_0(\chi+|\Omega|\cdot |a_{2,\sup}|)}{a_{1,\sup}}\}$.
Let
$$
\Omega_n=\Big\{x\in\Omega\,|\, u_n(x)\ge \frac{1}{2}\min\{\delta_0,\frac{a_{0,\inf}-\epsilon_0(\chi+|\Omega|\cdot |a_{2,\sup}|)}{a_{1,\sup}}\}\Big\}.
$$
Without loss of generality, we may assume that $m_0=\lim_{n\to\infty}|\Omega_n|$ exists, where $|\Omega_n|$ is the Lebesgue measure of
$\Omega_n$.  Assume that $m_0=0$. Then there is $\tilde u_n\in C^0(\bar\Omega)$ such that
$$
\delta_n\le \tilde u_n(x)\le  \frac{1}{2} \min\{\delta_0,\frac{a_{0,\inf}-\epsilon_0(\chi+|\Omega|\cdot |a_{2,\sup}|)}{a_{1,\sup}}\}
$$
and
$$
\lim_{n\to\infty} \|u_n-\tilde u_n\|_{L^p(\Omega)}=0\quad \forall\,\, 1\le p<\infty.
$$
This implies that
$$
\lim_{n\to\infty} \|u(\cdot,t;t_n,u_n)-u(\cdot,t;t_n,\tilde u_n)\|_{L^p(\Omega)}=0
$$
uniformly in $t\in[t_n,t_n+T]$ for all $1\le p<\infty.$
 Indeed, let $G(\cdot)$ be as in the proof of Theorem 1.1(1). Then  $G(u(\cdot,t;t_n,u_n))(t)=u(\cdot,t;t_n,u_n),$ $G(u(\cdot,t;t_n,\tilde u_n))(t)=u(\cdot,t;t_n,\tilde u_n)$. Let 
$$
\hat G(u_n)(t)=G(u(\cdot,t;t_n,u_n))(t),\quad \hat G (\tilde u_n)(t)=G(u(\cdot,t;t_n,\tilde u_n))(t),
$$
$$w_n(\cdot,t)=G(u(\cdot,t;t_n,u_n))(t)- G(u(\cdot,t;t_n,\tilde u_n))(t)$$
 and 
 $$W_n(\cdot,t)=V(G(u(\cdot,t;t_n,u_n))(t))-V( G(u(\cdot,t;t_n,\tilde u_n))(t)) .
 $$
  Then
\vspace{-0.1in}\begin{align}
\label{prooflemm5.3-eq1}
&w_n(\cdot,t)= \nonumber\\
&\,  e^{-A(t-t_n)}\big(u_n-\tilde u_n\big)-\chi\int_{t_n}^t e^{-A(t-s)}\nabla \big[w_n(\cdot,s)\cdot \nabla V(\hat G(u_n)(s)+\hat G(\tilde u_n)(s)\cdot \nabla W_n(\cdot,s)  \big]ds\nonumber\\
&+\int_{t_n}^t e^{-A(t-s)}w_n(\cdot,s)\Big(1+ a_0(s,\cdot)-a_1(s,\cdot)(\hat G(u_n)+\hat G(\tilde u_n))(s)-a_2(s,\cdot)\int_{\Omega}\hat G(u_n)(s)\Big)ds\nonumber\\
&-\int_{t_n}^t e^{-A(t-s)}a_2(s,\cdot)\big(\int_{\Omega}w_n(\cdot,s)\big)\hat G(\tilde u_n)(s)ds.
\end{align}
Now, fix  $1<p <\infty.$  By  regularity and a  priori estimates for
elliptic equations,  \cite[Theorem 1.4.3]{DH77},   Lemma \ref{lem-003}, and \eqref{prooflemm5.3-eq1},   for any $\epsilon \in (0, \frac{1}{2}),$ we have
\vspace{-0.1in}\begin{align}
\label{prooflemm5.3-eq2}
&\|w_n(\cdot,t)\|_{L^p(\Omega)}\nonumber\\
&\leq \|u_n-\tilde u_n\|_{L^p(\Omega)}+ C\chi\max_{t_n\leq s\leq t_n+T}\|\nabla V(\hat G(u_n)(s))\|_{C^0(\bar{\Omega})}\int_{t_n}^t(t-s)^{-\epsilon-\frac{1}{2}}\|w_n(\cdot,s)\|_{L^p(\Omega)}ds\nonumber\\
&\,\, +C\chi\max_{t_n\leq s\leq t_n+T}\|\hat G(\tilde u_n)(s)\|_{C^0(\bar{\Omega})}\int_{t_n}^t(t-s)^{-\epsilon-\frac{1}{2}}\|w_n(\cdot,s)\|_{L^p(\Omega)}ds\nonumber\\
&\,\, +C\int_{t_n}^t \{1+ A_0+A_1[\max_{t_n\leq s\leq t_n+T}(\|\hat G(u_n)(s)\|_{C^0(\bar{\Omega})}+\|\hat G(\tilde u_n)(s)\|_{C^0(\bar{\Omega})})]\}\|w_n(\cdot,s)\|_{L^p(\Omega)}ds\nonumber \\
&\,\, +C\int_{t_n}^t A_2|\Omega|\max_{t_n\leq s\leq t_n+T}\|\hat G(u_n)(s)\|_{C^0(\bar{\Omega})}\|w_n(\cdot,s)\|_{L^p(\Omega)}ds\nonumber\\
&\,\,  +C\int_{t_n}^tA_2\|\hat G(\tilde u_n)(s)\|_{C^0(\bar{\Omega})}\|w_n(\cdot,s)\|_{L^p(\Omega)}ds.
\end{align}
Therefore there exists a positive constant $C_0$ independent of $t$ and $n$ such that
\begin{equation}
\label{prooflemm5.3-eq3}
\|w_n(\cdot,t+t_n)\|_{L^p(\Omega)}\leq \|u_n-\tilde u_n\|_{L^p(\Omega)}+ C_0\int_{0}^{t}(t-s)^{-\epsilon-\frac{1}{2}}\|w_n(\cdot,s+t_n)\|_{L^p(\Omega)}ds \qquad \forall t \in [0, T].
\end{equation}
By \eqref{prooflemm5.3-eq3} and the generalized Gronwall's inequality (see \cite[page 6]{DH77}), we get
$$
\lim_{n\to\infty} \|u(\cdot,t;t_n,u_n)-u(\cdot,t;t_n,\tilde u_n)\|_{L^p(\Omega)}=0,
$$
uniformly in $t\in[t_n,t_n+T]$ for all $1\leq p<\infty.$
Therefore,
$$
\lim_{n\to\infty}\|V(u(\cdot,t;t_n,u_n))-V(u(\cdot,t;t_n;\tilde u_n))\|_{C^1(\bar \Omega)}=0
$$
uniformly in $t\in[t_n,t_n+T]$. Note that
$$
V(u(\cdot,t;t_n,\tilde u_n))(x)\le  \epsilon_0
$$
for all $t\in [t_n,t_n+T]$ and $x\in\Omega$. It then follows that
$$
V(u(\cdot,t;t_n, u_n))(x)\le  2 \epsilon_0
$$
for all $t\in [t_n,t_n+T]$, $x\in\Omega$, and $n\gg 1$. Then by the arguments of Lemma \ref{lem-009}, $\inf u(\cdot,t_n+T;t_n,u_n)\ge\delta_n$, which
is a contradiction.
Therefore, $m_0\not =0$.

By $m_0\not =0$ and comparison principle for parabolic equations,  without loss of generality, we may assume that
 $$
 \liminf_{n\to\infty}\|e^{-At}u_n\|_{C^0(\bar\Omega)}>0\quad \forall\,\, t\in [0,T].
 $$
 This together with the arguments in the proof of Theorem 1.1(2) implies that there is $T_0>0$ and $\delta_\infty>0$ such that
 $$\sup u(\cdot,t_n+T_0;t_n,u_n)\ge \delta_\infty$$
 for all $n\gg  1$.
 By  a priori estimates for parabolic equations, without loss of
generality, we may assume that
\vspace{-0.05in}$$
u(\cdot,t_n+T_0;t_n,u_n)\to u_0^*,\quad u(\cdot,t_n+T;t_n,u_n)\to u^*
\vspace{-0.05in}$$
as $n\to\infty$. By (H1), without loss of generality, we may also assume that
\vspace{-0.05in}$$
a_i(t+t_n,\cdot)\to a_i^*(t,x)
\vspace{-0.05in}$$
as $n\to\infty$ locally uniformly in $(t,x)\in\RR\times\bar\Omega$. Then by Corollary \ref{cor1},
\vspace{-0.05in}$$
u^*(x)=u^*(x,T;T_0,u_0^*)\quad {\rm and}\quad  \inf u^*=0,
\vspace{-0.05in}$$
where $(u^*(x,t;T_0,u_0^*), v^*(x,t;T_0,u_0^*))$ with $v^*(\cdot,t;T_0,u_0^*)=A^{-1}u^*(\cdot,t;T_0,u_0^*)$ is the solution of \eqref{u-v-eq1} with
$a_i(t,x)$ being replaced by $a^*(t,x)$.
By comparison principle, we must have $u_0^*\equiv 0$. But
\vspace{-0.05in}$$
\sup u_0^*\ge   \delta_\infty.
\vspace{-0.05in}$$
This is a contradiction.
\end{proof}

\begin{proof} [Proof of Theorem \ref{thm-003}]
We first prove the existence of entire positive solutions of \eqref{u-v-eq1} in the general case.

Let $\delta^*>0$ be given by Lemma \ref{lem-0010} with $T=1$. Choose $u_0 \in C^0(\bar{\Omega})$ such that $\delta^* \le u_0(x) \le M.$ By Lemma \ref{lem-0010} and \eqref{thm3-eq0},
 \vspace{-0.05in}\begin{equation}
 \label{thm3-eq1}
 \delta^*\le u(x, t_0+n;t_0,u_0) \le M \quad \forall\,\, x\in\bar\Omega,\,\,  t_0\in\RR,\, n\in\NN.
 \vspace{-0.05in}\end{equation}

Set $t_n=-n$ and define $u_n(x)=u(x,0;t_n,u_0).$ Choose $\tilde{t}$ such that $-2<\tilde{t}<-1.$ Then there is $\tilde M>0$ such that for each $n\ge 3,$ we have
\vspace{-0.05in}
$$ \|u_n\|_{\alpha}=\|u(\cdot,0;t_n,u_0)\|_{\alpha} = \|u(\cdot,0;\tilde{t},u(\cdot,\tilde{t};t_n,u_0))\|_{\alpha}\leq \tilde{M}.
\vspace{-0.05in}$$
 Therefore by Arzela-Ascoli Theorem, there exist $n_k$, $u^*_0 \in C^0(\bar{\Omega})$ such that $u_{n_k}$  converges to $u^*_0$ in $C^0(\bar{\Omega})$ as $n_k \to \infty.$ Then by Corollary \ref{cor1}, we have
 \vspace{-0.05in}
  $$u(\cdot,t;t_{n_k},u_{0})=u(\cdot,t;0,u(\cdot,0;t_{n_k},u_0))=u(\cdot,t;0,u_{n_k})  \to u(\cdot,t;0,u^*_0)
  \vspace{-0.05in}$$
   in $C^0(\bar{\Omega})$ as $n\to\infty$ for $t\ge 0$. Moreover, by \eqref{thm3-eq0} and Lemma \ref{lem-0010},
  \vspace{-0.05in} \begin{equation}
   \label{thm3-eq2}
   \delta^*\le u(x,n;0,u_0^*)\le M\quad \forall\,\, x\in\bar\Omega,\,\,  n\in\NN.
\vspace{-0.05in}   \end{equation}

We need to prove that  $u(\cdot,t;0,u^*_0)$ has  backward extension. To see   that, fix $m\in \NN$.  Then  $u(\cdot,t;t_n,u_0)$ is defined for $t>-m$
and $n>m$. Observe that
\vspace{-0.05in}$$
u_n(\cdot)=u(\cdot,0;t_n,u_0)=u(\cdot,0;-m,u(\cdot,-m;t_n,u_0)).
\vspace{-0.05in}$$
Without loss of generality, we may assume that
$u(\cdot,-m;t_{n_k},u_0)\to  u_m^*(\cdot)$
in $C^0(\bar\Omega)$.
Then
\vspace{-0.05in}$$
u(\cdot,t;t_{n_k},u_0)=u(\cdot,t;-m,u(\cdot,-m;t_{n_k},u_0)) \to u(\cdot,t;-m, u_m^*)
\vspace{-0.05in}$$
for $t>-m$ and $u(\cdot,t;0,u_0^*)=u(\cdot,t;-m,u_m^*)$ for $t\ge 0$. This implies that $u^*(x,t;0,u_0^*)$ has a backward extension up to $t=-m$. Let $m\to \infty$, we have that
$u^*(x,t)$ has a backward extension on $(-\infty,0)$.

 Let $u^*(x,t)=u^*(x,t;0,u_0^*)$ and $v^*(x,t)=A^{-1} u^*(\cdot,t),$ Then $(v^*(x,t),u^*(x,t))$ is an entire nonnegative solution of  \eqref{u-v-eq1}.  Moreover,
\vspace{-0.05in}\begin{equation}
\label{thm3-eq3}
\delta^*\le u^*(x,n)\le M\quad \forall \,\, x\in\bar\Omega,\,\, n\in\ZZ.
\vspace{-0.05in}\end{equation}
This implies that
\vspace{-0.05in}$$
0<\inf_{x\in\bar\Omega,t\in\RR}u^*(x,t)\le M,\quad 0<\inf_{x\in\bar\Omega,t\in\RR}v^*(x,t)\le M.
\vspace{-0.05in}$$
Therefore,  $(v^*(x,t),u^*(x,t))$ is an entire positive bounded solution of  \eqref{u-v-eq1}.

Next, we prove (1), (2), and (3).

  (1) Assume that $a_i(t+T,x)=a_i(t,x)$ for $i=0,1,2$. Let $\delta^*=\delta^*(T)>0$ be given by Lemma \ref{lem-0010} and set
  \begin{equation}
  \label{entire-eq4}
  E(T)=\{u_0 \in C^0(\bar{\Omega}) : \delta^* \leq u_0 \leq M\}.
    \end{equation}
    Note that $E(T)$ is nonempty, closed, convex and bounded subset of $C^0(\bar{\Omega}).$ Define  the map $\mathcal{T}(T):E(T) \to C^0(\bar{\Omega})$ by $\mathcal{T}(T)u_0=u(\cdot,T;0,u_0)$.
Note that $\mathcal{T}(T)$ is well defined and continuous by continuity with respect to initial conditions.

Let  $u_0 \in E(T)$. Then by Theorem \ref{thm-002}, we have   $0<u(\cdot,T;0,u_0)\leq M$ and by Lemma \ref{lem-0010}, we have $u(\cdot,T;0,u_0)\ge \delta^*.$ Thus $u(\cdot,T;0,u_0) \in E(T)$ and  $\mathcal{T}(T)E(T) \subset E(T)$.

Let $\frac{n}{2p}<\alpha < \frac{1}{2},$ and $\epsilon \in (0,\frac{1}{2}-\alpha).$  By the  similar arguments as those in the proof of local existence, we have that
\vspace{-0.05in}$$
\|u(\cdot,T;0,u_0)\|_{\alpha}\leq CMT^{-\alpha} + CM^2T^{\frac{1}{2}-\alpha -\epsilon} + CM[1+A_0+k_1(A_1+|\Omega|A_2)]T^{1-\alpha}.
\vspace{-0.05in}$$
 Now choose $\nu$ such that $0\leq \nu < 2\alpha -\frac{n}{p}$, then
$X^{\alpha} \subset C^{\nu}(\bar{\Omega})$,
 where the inclusion is continuous. Thus by Arzela-Ascoli Theorem, $\mathcal{T}(T)E(T)$ is precompact.
Therefore by Schauder fixed point theorem, there exists $u^T \in E(T) $ such that $\mathcal{T}(T)u^T=u^T,$ i.e $u(\cdot,T;0,u^T)=u^T(\cdot).$ Since $u(\cdot,t+T;0,u^T)=u(\cdot,t;T,u(\cdot,T;0,u^T))=u(\cdot,t;0,u^T),$ $u(\cdot,t;0,u^T)$ is periodic with period $T.$ Now from the facts that   $u(.,t;0,u^T)$ is periodic with period $T$ and the uniqueness of solutions of
\vspace{-0.05in}\[
\begin{cases}
-\Delta v+v=u(x,t;0,u^T)  \quad x \in \Omega \\
\frac{\p v}{\p n}=0 \quad \text{on} \:\p \Omega,
\end{cases}
\vspace{-0.05in}\]
we get  $v(\cdot,t;0,u^T)=A^{-1}u(\cdot,t;0,u^T)$ is periodic with period $T.$ Then  $(u(\cdot,t;0,u^T),v(\cdot,t;0,u^T))$ is a positive periodic solution of \eqref{u-v-eq1}.

\medskip

 (2) Assume that $a_i(t,x)\equiv a_i(t)$.  Note that in this  case, every solution of the ODE
 \vspace{-0.05in}
 $$u_t=u(a_0(t)-(a_1(t)u+|\Omega|a_2(t))u)
 \vspace{-0.05in}$$
  is a solution
 of the first equation of the system \eqref{u-v-eq1} with Neumann boundary. (2) then follows from Lemma \ref{lem-00001}.

\medskip

 (3) Assume that $a_i(t,x)\equiv a_i(x)$ $(i=0,1,2$). In this case, each $\tau>0$ is a period for $a_i$. By (2), there exist $u^\tau \in E(\tau)$ such that $(u(\cdot,t;0,u^\tau),A^{-1}(u(\cdot,t;0,u^\tau)))$ is a positive  periodic solution of \eqref{u-v-eq1} with period $\tau$.  Note that there is $\tilde M>0$ such that  for each $\tau>0$ and  $u_0 \in E(\tau),$ $\|u(\cdot,t;0,u_0)\|_{\alpha} \leq \tilde M $ for each $1\leq t \leq 2.$ Let $\tau_n=\frac{1}{n},$ then  there exists $u_n \in E(\tau_n)$ such that $u(\cdot,t;0,u_n)$ is periodic with period $\tau_n$ and
\vspace{-0.05in}\begin{equation}
\label{entire-eq5}
\|u_n\|_{\alpha}=\|u(\cdot,\tau_n;0,u_n)\|_{\alpha}=\|u(\cdot, N\tau_n;0, u_n)\|_{\alpha} \leq \tilde M,
\vspace{-0.05in} \end{equation}
 where $N$ is such that $1\leq N\tau _n \leq 2.$

We claim that there is $\delta>0$ such that
\vspace{-0.05in}\begin{equation}
\label{entire-eq6}
\|u_n(\cdot)\|_{C^0(\bar\Omega)} \ge \delta\quad \forall\,\,  n\ge 1.
\vspace{-0.05in}\end{equation}
 Suppose by contradiction that this does not hold. Then there exists $n_k $  such that $\|u_{n_k}\|_{C^0(\bar\Omega)} < \frac{1}{n_k}$ for every  $k \ge 1$.  Let  $k_0$ such that $\frac{1}{n_{k}}<\epsilon_0$ for all $k\ge k_0.$  By the proof of Lemma \ref{lem-009} we get that $u(\cdot,t;0,u_{n_{k}}) \ge u(t;\inf u_{n_k})$ for all $t>0$ and $k\ge k_0,$ where $u(t;\inf u_{n_k})$ is the solution of \eqref{entire-eq3}
with $u(0;\inf u_{n_k})=\inf u_{n_k}$.
Let $\delta_*=\frac{ a_{0,\inf}-\epsilon_0(\chi+|\Omega|\cdot |a_{2,\sup}|)}{2a_{1,\sup}}$ and choose $k$ large enough such that $\frac{1}{n_k}<\delta_*$.
 There is  $t_0>0$ such that $ u(t;\inf u_{n_k})>\delta_*$  for all $t\ge t_0$.  Then we have
\vspace{-0.05in} $$
 u_{n_k}(x)=u(\cdot,m \tau_{n_k};0,u_{n_k})\ge u(m \tau_{n_k};\inf u_{n_k})>\delta^*
\vspace{-0.05in} $$
 for all $m\in \NN$ satisfying that $m\tau_{n_k}>t_0$. This is a contradiction. Therefore,
 \eqref{entire-eq6} holds.

 By \eqref{entire-eq5} and Arzela-Ascoli theorem, there exist $n_k$, $u^* \in C^0(\bar{\Omega})$ such that $u_{n_k}$  converges to $u^*$ in $C^0(\bar{\Omega}).$  By \eqref{entire-eq6},
 $\|u^*(\cdot)\|_{C^0(\bar\Omega)}\ge \frac{\delta}{2}.$
We claim that $(u(\cdot,t;0,u^*),v(\cdot,t;0,u^*))$ with $v(\cdot,t;0,u^*)=A^{-1}u(\cdot,t;0,u^*)$ is a steady state solution of \eqref{u-v-eq1}, that is,
\vspace{-0.05in}\begin{equation}
\label{entire-eq7}
u(\cdot,t;0,u^*)=u^*(\cdot)\quad \text{for all}\,\, t\ge 0.
\vspace{-0,05in}\end{equation}
In fact,
let $\epsilon>0$ be fix and let $t>0$. Note that
 \vspace{-0,05in}
 $$ [n_k  t]\tau_{n_k}=\frac{[n_kt]}{n_k}\leq t \leq \frac{[n_kt]+1}{n_k}=([n_k t]+1)\tau_{n_k}.
 \vspace{-0.05in}$$
By Corollary \ref{cor1}, we can  choose $k$ large enough such that
\vspace{-0.05in}
\[|u(x,t;0,u^*)-u(x,t;0,u_{n_k})|< \epsilon,\quad |u_{n_k}(x)-u^*(x)|< \epsilon,\quad |u(x,\frac{[n_kt]}{n_k};0,u_{n_k})-u(x,t;0,u_{n_k})|< \epsilon\]
for all $x\in\bar\Omega$.
We then have
\vspace{-0,05in}\begin{align*}
|u(x,t;0,u^*)-u^*|&\le |u(x,t;0,u^*)-u(x,t;0,u_{n_k})| +|u(x,t;0,u_{n_k})-u(x,[n_k t]\tau_{n_k};0,u_{n_k})|\\
&\quad +|u_{n_k}(x)-u^*(x)|<3 \epsilon\quad \forall\,\, x\in\bar\Omega.
\vspace{-0.05in}\end{align*}
Letting $\epsilon\to 0$, \eqref{entire-eq7} follows.
\end{proof}

\begin{remark}
\label{remark-h3-2}
It follows from the proof of the existence of entire positive solutions in  Theorem \ref{thm-003} and Remark \ref{remark-h3-1} that the existence of positive entire solutions also holds under the weaker condition {\bf (H2)$^{'}$}.
\end{remark}

\section{Asymptotic Stability of Solutions}

In this section, we investigate the stability and uniqueness of entire positive solutions of \eqref{u-v-eq1}, the asymptotic behavior of global positive solutions of \eqref{u-v-eq1}, and prove Theorems \ref{thm-004} and \ref{thm-005}.  Without loss of generality, we suppose throughout this section that $\chi>0,$ since the arguments for $\chi \leq 0$ are similar.
We first prove Theorem \ref{thm-005}.

\begin{proof} [Proof of Theorem \ref{thm-005}]
Suppose that \eqref{stability-cond-2-eq1} holds.
For given $u_0\in C^0(\bar\Omega)$ with $u_0(x)\ge 0$, $u_0(\cdot)\not =0$, and $t_0\in\RR$, let $(u(\cdot,t;t_0,u_0),v(\cdot,t;t_0,u_0))$ be
the solution of \eqref{u-v-eq1} satisfying the properties in Theorem \ref{thm-001}(2). By Theorem \ref{thm-002}, $(u(\cdot,t;t_0,u_0)$, $v(\cdot,t;t_0,u_0))$ exists for all $t>t_0$. Note that $u(x,t;t_0,u_0)>0$ for all $x\in\bar\Omega$ and $t>t_0$. Without loss of generality, we may assume that
$u_0(x)>0$ for all $x\in\bar\Omega$.

Let $(\bar u(t),\underline u(t))$ be as in \eqref{u-under-above-bar-eq}.
By Lemma \ref{lem-004} and the proof of Theorem \ref{thm-002},
\vspace{-0.05in}\begin{equation}
\label{thm-5-eq1}
\underline u(t)\le u(x,t;t_0,u_0)\le \bar u(t)\quad \forall\,\, x\in\bar\Omega,\,\, t\ge t_0.
\vspace{-0.05in}\end{equation}
Let $r_1$ and $r_2$ be as in \eqref{stability-cond-2-eq2} and \eqref{stability-cond-2-eq3}, respectively.

\medskip

(1) By Lemma \ref{lem-00002}(1) and (2), for any $\epsilon>0$, there is $t_\epsilon>0$ such that
\vspace{-0,05in}\begin{equation}
\label{thm-5-eq2}
r_2-\epsilon\le \underline u(t)\le\bar u(t)\le r_1+\epsilon \quad {\rm for} \quad t\ge t_0+t_\epsilon.
\vspace{-0.05in}\end{equation}
(1) then follows from \eqref{thm-5-eq1} and \eqref{thm-5-eq2}.

\smallskip

(2)  We first consider the case that $a_i(t,x)$ $(i=0,1,2)$ are periodic in $t$ with period $T$.
 By Lemma \ref{lem-00002}(1), (2), and (3), there are periodic functions $m(t)$ and $M(t)$ with period $T$ such that
\vspace{-0.1in}$$
r_2\le m(t)\le M(t)\le r_1\quad \forall\,\, t\in\RR
\vspace{-0.05in}$$
and
for any $\epsilon>0$, there is $t_\epsilon>0$ such that
\vspace{-0.05in}\begin{equation}
\label{thm-5-eq3}
m(t)-\epsilon\le \underline u(t)\le \bar u(t)\le M(t)+\epsilon\quad \forall \,\, t\ge t_0+t_\epsilon.
\vspace{-0.05in}\end{equation}
In this case, (2) then follows from \eqref{thm-5-eq1} and \eqref{thm-5-eq3}.

Next, we consider the cases that $a_i(t,x)$ ($i=0,1,2$) are almost periodic in $t$.
  By Lemma \ref{lem-00002}(1), (2), and (4), there are almost periodic functions $m(t)$ and $M(t)$ such that
$$
r_2 \le m(t)\le M(t)\le r_1\quad \forall\,\, t\in\RR
$$
and  for any $\epsilon>0$, there is $t_\epsilon>0$ such that \eqref{thm-5-eq3} holds.
(2) then follows from \eqref{thm-5-eq1} and \eqref{thm-5-eq3}.
\end{proof}

\vspace{-0.01in}We now prove  Theorem \ref{thm-004}

\vspace{-0.1in}\begin{proof} [Proof of Theorem \ref{thm-004}]

(1) Suppose that $a_i(t,x)\equiv a_i(t)$ for $i=0,1,2$ and
\vspace{-0.05in}\begin{equation}
\label{thm-4-eq0}
\inf_{t\in \mathbb{R}} \big\{a_1(t)-|\Omega|\abs{a_2(t)}\big\} >2\chi.
\vspace{-0.05in}\end{equation}
For given $u_0\in C^0(\bar\Omega)$ with $u_0(x)\ge 0$, $u_0(\cdot)\not =0$, and $t_0\in\RR$, let $(u(\cdot,t;t_0,u_0),v(\cdot,t;t_0,u_0))$ be
the solution of \eqref{u-v-eq1} satisfying the properties in Theorem \ref{thm-001}(2).  Again, by Theorem \ref{thm-002}, $(u(\cdot,t;t_0,u_0)$, $v(\cdot,t;t_0,u_0))$ exists for all $t>t_0$ and without loss of generality, we may assume that
$u_0(x)>0$ for all $x\in\bar\Omega$.

Let $(\bar u(t),\underline u(t))$ be as in \eqref{u-under-above-bar-eq}.
Let $(u^*(t),v^*(t))$ be the unique entire positive spatially homogeneous solution of \eqref{u-v-eq1} in Theorem \ref{thm-003}(2).
By Lemma \ref{lem-004} and the proof of Theorem \ref{thm-002},
\vspace{-0.05in}\begin{equation}
\label{thm-4-eq1}
\underline u(t)\le u(x,t;t_0,u_0)\le \bar u(t)\quad \forall\,\, x\in\bar\Omega,\,\, t\ge t_0.
\vspace{-0.05in}\end{equation}
  By Lemma \ref{lem-00002}(1), for any $\epsilon>0$, there is $t_\epsilon>0$ such that
 \vspace{-0.05in} \begin{equation}
  \label{thm-4-eq2}
  \underline u(t)-\epsilon\le u^*(t)\le \bar u(t)+\epsilon\quad \forall\,\, t\ge t_0+t_\epsilon.
  \vspace{-0.05in}\end{equation}

By \eqref{thm-4-eq1}  and \eqref{thm-4-eq2},  it suffices to show $0 \leq \ln \frac{\overline{u}(t)}{\underline{u}(t)} \longrightarrow 0  \, \text{as } t \to \infty.$ Assume  that $t>t_0$. By dividing the first equation of $\eqref{ode0}$ by $\overline{u},$ and the second by $\underline{u},$ we get
\vspace{-0.05in}\begin{equation*}
\begin{cases}
 \frac{\overline{u}'}{\overline{u}}=  \left[ a_0(t)-( a_1(t)-|\Omega| (a_2(t))_- -\chi) \overline{u}-( |\Omega| (a_2(t))_+ + \chi )\underline {u} \right] \cr
 \frac{\underline{u}'}{\underline{u}}=\left[a_0(t)-(a_1(t)-|\Omega|(a_2(t))_--\chi) \underline{u}-(|\Omega|(a_2(t))_+ +\chi)\overline {u}\right]
 \end{cases}
 \vspace{-0.05in}\end{equation*}
This together with \eqref{thm-4-eq0} implies that
\vspace{-0.05in} $$\frac{d}{dt} \Big(       \ln \frac{\overline{u}}{\underline{u}}      \Big)=
\frac{\overline{u}'}{\overline{u}} -   \frac{\underline{u}'}{\underline{u}}= -\left( a_1(t)-|\Omega|\abs{a_2(t)}-2\chi \right)  \left(  \overline {u}-\underline{u} \right) \le 0.
 \vspace{-0.05in}$$
Thus by integrating over $(t_0,t),$ we get
\vspace{-0.05in}$$
   0 \leq    \ln \frac{\overline{u}}{\underline{u}} \leq \ln \frac{\overline{u}_0}{\underline{u}_0}, \quad{\rm and\,\,\,  then}\quad  \frac{\overline{u}(t)}{\underline{u}(t)}  \leq  \frac{\overline{u}_0}{\underline{u}_0}.
\vspace{-0.05in}$$
 We have by mean value theorem that
  \vspace{-0.05in}
  $$\overline{u}-  \underline{u}=e^{\ln \overline{u}}   - e^{\ln \underline{u}}=e^{\ln \hat{u}} \Big(     \ln \frac{\overline{u}}{\underline{u}} \Big)=\hat{u} \Big(     \ln \frac{\overline{u}}{\underline{u}} \Big)       ,
  \vspace{-0.05in}$$
where $\underline{u} \leq \hat{u} \leq  \overline{u}.$ Therefore
\vspace{-0.05in}\[ \frac{d}{dt} \Big(       \ln \frac{\overline{u}}{\underline{u}}      \Big) \leq      -\Big( a_1(t)-|\Omega|\abs{a_2(t)}-2\chi \Big)  \Big(   { \inf_{t\ge t_0}} \overline{u}(t)   \frac{\underline{u}_0}{\overline{u}_0}     \Big)        \Big(     \ln \frac{\overline{u}}{\underline{u}} \Big).
\vspace{-0.05in} \]
By letting $\epsilon_0=\inf_{t\in\RR}\{a_1(t)-|\Omega|\abs{a_2(t)}-2\chi\}   \left(   \inf_{t\ge t_0} \overline{u}(t)   \frac{\underline{u}_0}{\overline{u}_0}     \right),$ we have $\epsilon_0>0$ and
 \vspace{-0.05in}\[   0\leq      \ln \frac{\overline{u}}{\underline{u}} \leq   \ln \frac{\overline{u}_0}{\underline{u}_0} e^{-\epsilon_0 t}   \to 0\quad {\rm as}\quad t\to\infty.
 \vspace{-0.05in}  \]

\smallskip

(2) Let $L_1(t)$ and $L_2(t)$ be as in \eqref{L-eq1} and \eqref{L-eq2}, respectively. By \eqref{stability-cond-2-eq2},
 \vspace{-0.05in}
 $$\mu=\limsup_{t-s\to\infty}\frac{1}{t-s}\int_s^t(L_1(\tau)- L_2(\tau))d\tau <0.
 \vspace{-0.05in}$$
 Fix $0<\epsilon<-\mu.$
 Let $r_1$ and $r_2$ be as in \eqref{r-eq1} and \eqref{r-eq2}, respectively. By \eqref{stability-cond-2-eq1},
   Theorem  \ref{thm-005}(1), and definition of $\mu$, for any $\epsilon>0$,  there exists $T_\epsilon>0$ such that
 \vspace{-0.05in}$$r_2-\epsilon \le u(\cdot,t_0+t;t_0;u_0)\le r_1+\epsilon, \, \,\,  r_2-\epsilon \le u^*(x,t) \le r_1+\epsilon\,\, \,  \forall x\in\bar\Omega,\, \,  t\ge t_0+T_\epsilon,
 \vspace{-0.05in}$$
 and
\vspace{-0.05in}$$\int_{t_0}^{t_0+t}(L_1(s)- L_2(s))ds \le (\mu+\epsilon)t, \,  \forall\,\, t_0\in\RR,\,\,  t\ge t_0+ T_\epsilon.
\vspace{-0.05in}$$

We first prove that for any entire positive solution $(u^*(x,t),v^*(x,t))$ of \eqref{u-v-eq1}, \eqref{global-stability-2-eq1} holds.
To simplify the notation, for given $t_0\in\RR$ and $u_0\in C^0(\bar\Omega)$ with $u_0(x)\ge 0$ and $u_0(\cdot)\not =0$,
set $u(t)=u(\cdot,t;t_0;u_0)$ and $u^*(t)=u^*(\cdot,t).$
 Let $w(t)=u(t)-u^*(t).$ Then $w$ satisfy the equation
\vspace{-0.05in}\begin{align}
\label{new-eqq1}
w_t=&\Delta w-\chi\nabla (w\cdot \nabla A^{-1}u)-\chi\nabla (u^*\cdot \nabla A^{-1}w) +w\left(a_0(t,x)-a_1(t,x)(u+u^*\right)\nonumber\\
&-a_2(t,x)\int_{\Omega}u)-a_2(t,x)\big(\int_{\Omega}w\big)u^*
\vspace{-0.05in}\end{align}
for $t>t_0$.
 By the similar arguments for \eqref{new-eq2}, we have that $\int_{\Omega}w_+^2$  is weakly differentiable and moreover  $$\frac{d}{dt}\int_{\Omega}w_+^2=2\int_{\Omega}w_+w_t\quad \forall\,\, a.e.\, t>t_0.$$

 Next, by  multiplying \eqref{new-eqq1} by $w_{+}$ and integrating it over $\Omega$, we get
\vspace{-0.05in}\begin{multline*}
\frac{1}{2}\frac{d}{dt}\int_{\Omega}w_{+}^2+\int_{\Omega}|\nabla w_{+}|^2=\chi\int_{\Omega}w_{+}\nabla w_{+} \cdot \nabla A^{-1}u+\chi\int_{\Omega}u^*\nabla w_{+} \cdot \nabla A^{-1}w\\
+\int_{\Omega}w_{+}^2\big(a_0(t,x)-a_1(t,x)(u+u^*)-a_2(t,x)\int_{\Omega}u\big)-\big(\int_{\Omega}w\big)\int_{\Omega}a_2(t,x)u^*w_{+}
\vspace{-0.05in}\end{multline*}
for  a.e  $t>t_0$.
Integrating by part and using the equation of $A^{-1}u,$ we get for  a.e  $t> t_0+T_\epsilon$
\vspace{-0.05in}\begin{align*}
\frac{1}{2}\frac{d}{dt}\int_{\Omega}w_{+}^2+\int_{\Omega}|\nabla w_{+}|^2&= \frac{\chi}{2}\int_{\Omega}w_{+}^2 (u- A^{-1}u)+\chi\int_{\Omega}u^*\nabla w_{+} \cdot \nabla A^{-1}w\\
&\,\,\quad  +\int_{\Omega}w_{+}^2\big(a_0(t,x)-a_1(t,x)(u+u^*)-a_2(t,x)\int_{\Omega}u\big)\\
&\,\,\quad -\big(\int_{\Omega}w\big)\int_{\Omega}a_2(t,x)u^*w_{+}\\
&\leq \frac{\chi}{2}\int_{\Omega}w_{+}^2 (u- A^{-1}u)+\chi\int_{\Omega}u^*\nabla w_{+} \cdot \nabla A^{-1}w\\
&\,\,\quad +\int_{\Omega}w_{+}^2\big(a_{0,\sup}-a_{1,\inf}(t)(u+u^*)-(a_{2,\inf}(t))_+\int_{\Omega}u+(a_{2,\inf}(t))_-\int_{\Omega}u\big)\\
&\,\,\quad -\big(\int_{\Omega}w\big)\int_{\Omega}a_2(t,x)u^*w_{+}.
\vspace{-0.05in}\end{align*}
We have by Young's inequality  that
\vspace{-0.1in}
$$\chi\int_{\Omega}u^*\nabla w_{+} \cdot \nabla A^{-1}w \leq \int_{\Omega}|\nabla w_{+}|^2+\frac{(\chi ( r_1+\epsilon))^2}{4}\int_{\Omega}|\nabla A^{-1}w|^2.
\vspace{-0.1in}$$
Using the equation of $A^{-1}u$,  we get
\vspace{-0.1in}$$\int_{\Omega}|\nabla A^{-1}w|^2 \leq \int_{\Omega}w_{+}^2+\int_{\Omega}w_{-}^2
\vspace{-0.1in}
$$
for $t>t_0$.
Also we have for $t>t_0+T_\epsilon$ that
\vspace{-0.1in}\begin{align*}
& -\int_{\Omega}w  \int_{\Omega}a_2(t,x)u^*w_{+}\\
&\le  \int_\Omega w_-  \int_{\Omega}{ \big(a_{2,\sup}(t)\big)_+}u^*w_{+}-(a_{2,\inf}(t))_+\int_\Omega w_+  \int_{\Omega}u^*w_{+}+(a_{2,\inf}(t))_-\int_\Omega w_+  \int_{\Omega}u^*w_{+}\\
 &\le [(r_1+\epsilon)(a_{2,\inf}(t))_- -( r_2-\epsilon) ( a_{2,\inf}(t))_+ ](\int_\Omega w_+)^2 + (r_1+\epsilon){ \big(a_{2,\sup}(t)\big)_+} (\int_\Omega w_-)(\int_\Omega w_+).
\vspace{-0.1in}\end{align*}
By combining all these inequalities we  have for a.e  $t>t_0+T_\epsilon$ that
\begin{align}
\label{proof-thm4-eq1}
\frac{1}{2}\frac{d}{dt}\int_{\Omega}w_{+}^2 \leq &  \Big(a_{0,\sup}(t)+\frac{\chi}{2}\big((r_1+\epsilon)-(r_2-\epsilon) \big)+\frac{(\chi (r_1+\epsilon))^2}{4}\Big)\int_{\Omega}w_{+}^2\nonumber\\
&-\Big((r_2-\epsilon)(2a_{1,\inf}(t)+|\Omega|(a_{2,\inf}(t))_+)\Big) \int_{\Omega}w_{+}^2\nonumber\\
&+2|\Omega|(r_1+\epsilon)(a_{2,\inf}(t))_-\int_\Omega w_+^2 +\big(\frac{(\chi (r_1+\epsilon))^2}{4}\big)\int_{\Omega}w_{-}^2\nonumber\\
& -( r_2-\epsilon) ( a_{2,\inf}(t))_+  (\int_\Omega w_+)^2 + (r_1+\epsilon){ \big(a_{2,\sup}(t)\big)_+} (\int_\Omega w_-)(\int_\Omega w_+).
\end{align}

Similarly we have that  $\int_{\Omega}w_-^2$  is weakly differentiable with  $\frac{d}{dt}\int_{\Omega}w_-^2=-2\int_{\Omega}w_-w_t,$ and   for  a.e  $t>t_0+T_\epsilon$
\begin{align}
\label{proof-thm4-eq2}
\frac{1}{2}\frac{d}{dt}\int_{\Omega}w_{-}^2 \leq & \Big(a_{0,\sup}(t)+\frac{\chi}{2}\big((r_1+\epsilon)-(r_2-\epsilon) \big)+\frac{(\chi (r_1+\epsilon))^2}{4}\Big) \int_{\Omega}w_{-}^2\nonumber\\
&-\Big((r_2-\epsilon)(2a_{1,\inf}(t)+|\Omega|{ \big(a_{2,\inf}(t)\big)_+})\Big) \int_{\Omega}w_{-}^2\nonumber\\
&+2|\Omega|(r_1+\epsilon)(a_{2,\inf}(t))_-\int_\Omega w_-^2 +\big(\frac{(\chi (r_1+\epsilon))^2}{4}\big)\int_{\Omega}w_{+}^2 \nonumber\\
&  -( r_2-\epsilon) ( a_{2,\inf}(t))_+  (\int_\Omega w_-)^2 + (r_1+\epsilon) { \big(a_{2,\sup}(t)\big)_+} (\int_\Omega w_-)(\int_\Omega w_+).
\end{align}

Note that
\vspace{-0.05in}\begin{align*}
&- (r_2-\epsilon) (a_{2,\inf}(t))_+ \Big((\int_\Omega w_+)^2+(\int_\Omega w_-)^2\Big) + 2 (r_1+\epsilon) { \big(a_{2,\sup}(t)\big)_+}(\int_\Omega w_-)(\int_\Omega w_+)\\
&\le 2\Big( (r_1+\epsilon) { \big(a_{2,\sup}(t)\big)_+} -(r_2-\epsilon) ( a_{2,\inf}(t))_+\Big) (\int_\Omega w_-)(\int_\Omega w_+)\\
&\le |\Omega|\Big[\epsilon\Big({ \big(a_{2,\sup}(t)\big)_+} +(a_{2,\inf}(t))_+\Big) +\Big( r_1{\big(a_{2,\sup}(t)\big)_+}-r_2(a_{2,\inf}(t))_+\Big)\Big]
\Big(\int_\Omega w_-^2+\int_\Omega w_+^2\Big).
\vspace{-0.05in}\end{align*}
Set
 \vspace{-0.05in}$$K(t,\epsilon)=\chi \epsilon+\chi^2\frac{\epsilon}{2}(2r_1+\epsilon)+2|\Omega|\epsilon(a_{2,\inf})_-+|\Omega|\epsilon(a_{2,\sup}(t) +(a_{2,\inf}(t))_+)+\epsilon(2a_{1,\inf}(t)+|\Omega|a_{2,\inf}(t)).
 \vspace{-0.05in}$$
Adding \eqref{proof-thm4-eq1} and \eqref{proof-thm4-eq2}, we then have
\vspace{-0.05in}\begin{align*}
\frac{d}{dt}\int_{\Omega}(w_{+}^2+w_{-}^2)(t) \leq& { 2\Big\{L_2(t)-L_1(t)+K(t,\epsilon)\Big\}\int_{\Omega}(w_{+}^2+w_{-}^2)}
\vspace{-0.05in}\end{align*}
for  a.e $t>t_0+T_\epsilon$. Therefore by the continuity with respect to time  of both sides of this last inequality, we get
\vspace{-0.05in}\begin{align*}
\frac{d}{dt}\int_{\Omega}(w_{+}^2+w_{-}^2)(t) \leq& { 2\Big\{L_2(t)-L_1(t)+K(t,\epsilon)\Big\}\int_{\Omega}(w_{+}^2+w_{-}^2)}
\vspace{-0.05in}\end{align*}
for  $t>t_0+T_\epsilon.$ Then by Gronwall's inequality,
\vspace{-0.05in}$$\int_{\Omega}(w_{+}^2(t)+w_{-}^2(t)) \leq \int_{\Omega}(w_{+}^2(t_0+T_\epsilon)+w_{-}^2(t_0+T_\epsilon)) e^{2\int_{t_0}^t(L_1(s)-L_2(s)+K(s,\epsilon))ds}  \quad \text{for all $t>t_0+T_\epsilon$.}
\vspace{-0.05in}$$

Note that $0\leq \sup_{t\in\RR}\abs{K(t,\epsilon)} \to 0$ as $\epsilon \to 0$ and choose $\epsilon_0 \ll 1$ ($\epsilon_0<-\mu$) such  that
 \vspace{-0.05in}
 $$0\leq \sup_{t\in\RR}\abs{K(t,\epsilon)}<\frac{-\mu-\epsilon_0}{2}.
 \vspace{-0.05in}$$
By $\int_{t_0}^t(L_1(s)- L_2(s))ds \le (\mu+\epsilon_0)(t-t_0)$ for $t\ge t_0+T_{\epsilon_0}$,  we have
\vspace{-0.05in}
\begin{align*}
\int_{\Omega}(w_{+}^2(t)+w_{-}^2(t))& \leq (\int_{\Omega}w_{+}^2(t_0+T_{\epsilon_0})+w_{-}^2(t_0+T_{\epsilon_0})) e^{2(\mu+\epsilon_0)(t-t_0)} e^{2(\frac{-\mu-\epsilon_0}{2})(t-t_0)} \\
&\leq (\int_{\Omega}w_{+}^2(t_0+T_{\epsilon_0})+w_{-}^2(t_0+T_{\epsilon_0})) e^{(\mu+\epsilon_0)(t-t_0)}   \quad \forall \,\, t>t_0+T_{\epsilon_0}.
\vspace{-0.05in}\end{align*}
Therefore
\vspace{-0.08in}\begin{equation}
\label{thm-4-eq3}
\lim_{t \to \infty}\|u(\cdot,t+t_0;t_0,u_0)-u^*(\cdot,t+t_0)\|_{L^2(\Omega)}=\lim_{t \to \infty}\|w(t+t_0)\|^2_{L^2(\Omega)}=0
\vspace{-0.05in}\end{equation}
uniformly in $t_0\in\RR$.

We claim that \eqref{global-stability-2-eq1} holds.
Suppose by contradiction that there is $t_0\in\RR$ such that $u(\cdot,t;t_0,u_0) \nrightarrow u^*(\cdot,t)$ in $ C^0(\bar\Omega) $ as  $t \to \infty.$
Then there exists $\epsilon_0>0$ and a sequence $t_n \to \infty$ as $n \to \infty$  such that \[\|u(\cdot,t_n;t_0,u_0)-u^*(\cdot,t_n)\|_{C^0(\bar\Omega)}>\epsilon_0.\] Since $u(\cdot,t_n;t_0,u_0) ,u^*(\cdot,t_n) \in C^0(\bar{\Omega})$ are uniformly bounded and equicontinuous,
 there exists up to subsequence $u^1,u_*^1 \in C^0(\bar{\Omega})$ such that $u(\cdot,t_n;t_0,u_0),\, u^*(\cdot,t_n)$ converges respectively  to $u^1,\, u_*^1$  in $ C^0(\bar{\Omega}).$ Therefore by dominated convergence theorem, $u(\cdot,t_n;t_0,u_0) \to u_1$  and $u^*(\cdot,t_n) \to u_*^1$ in $L^{2}(\Omega)$ as $t \to \infty.$  This implies that
\vspace{-0.05in}$$
\lim_{t_n \to \infty}\|u(\cdot,t_n; t_0,u_0)-u^*(\cdot,t_n)\|_{L^2(\Omega)}=0.
\vspace{-0.05in}$$
Hence
 we have  that $u^1=u_*^1.$ But also from
 $\|u(\cdot,t_n; t_0,u_0)-u^*(\cdot,t_n)\|_{C^0(\bar\Omega)}>\epsilon_0$,
  we get as $n \to \infty$, $\|u^1-u^*_1\|_{C^0(\bar\Omega)}\geq\epsilon_0$,
which is a contradiction. Hence \eqref{global-stability-2-eq1} holds.

Next, we prove that \eqref{u-v-eq1} has a unique entire positive solution. Suppose that $(u^*_1(x,t),v^*_1(x,t))$ and
$(u^*_2(x,t),v^*_2(x,t))$ are two entire positive solutions of \eqref{u-v-eq1}. We claim that  $(u^*_1(x,t),v^*_1(x,t))\equiv (u^*_2(x,t),v^*_2(x,t))$
for any $t\in\RR$.  Indeed, fix any $t\in\RR$, by the arguments in the proof of \eqref{thm-4-eq3},
\vspace{-0.05in}\begin{align*}
\|u^*_1(\cdot,t)-u^*_2(\cdot,t)\|_{L^2(\Omega)}=\|u(\cdot,t;t_0,u_1^*(\cdot,t_0))-u(\cdot,t;t_0,u_2^*(\cdot,t_0))\|_{L^2(\Omega)}\to 0\quad {\rm as}\quad t_0\to -\infty.
\vspace{-0.05in}\end{align*}
This together with the continuity of $u_i^*(x,t)$ ($i=1,2$) implies that $u^*_1(x,t)\equiv u_2^*(x,t)$ and then $v_1^*(x,t)\equiv v_2^*(x,t)$. Hence \eqref{u-v-eq1} has a unique entire positive solution.

Assume now that $a_i(t,x)\equiv a_i(x)$ ($i=0,1,2$). By Theorem \ref{thm-003}(3) and the uniqueness of entire positive solutions of \eqref{u-v-eq1},
\eqref{u-v-eq1} has a unique positive steady state solution.

Assume that $a_i(t+T,x)=a_i(t,x)$ ($i=0,1,2$). By Theorem \ref{thm-003}(1) and the uniqueness of entire positive solutions of \eqref{u-v-eq1},
\eqref{u-v-eq1} has a unique positive periodic solution with period $T$.

Finally assume  that $a_i(t,x)$ $(i=0,1,2)$ are almost periodic in $t$ uniformly with respect to $x\in\bar\Omega$. Let $(u^*(x,t),v^*(x,t))$ be the unique positive solution of \eqref{u-v-eq1}.
We claim that $(u^*(x,t),v^*(x,t))$ is almost periodic in $t$. Indeed,
for any  sequences $\{\beta_n^{'}\}$, $\{\gamma_n^{'}\}\subset \RR$, by the almost periodicity of $a_i(t,x)$ in $t$, there are  subsequences
$\{\beta_n\}\subset \{\beta_n^{'}\}$
and $\{\gamma_n\}\subset \{\gamma_n^{'}\}$ such that
\vspace{-0.05in}$$
\lim_{m\to\infty}\lim_{n\to\infty} a_i(t+\beta_n+\gamma_m,x)=\lim_{n\to\infty} a_i(t+\beta_n+\gamma_n,x)
\vspace{-0.05in}$$
uniformly in $t\in\RR$ and $x\in\bar\Omega$ for $i=0,1,2$. Let
\vspace{-0.05in}$$
\hat a_i(t,x)=\lim_{n\to\infty} a_i(t+\beta_n,x),\quad \check a_i(t,x)=\lim_{m\to\infty} \hat a_i(t+\gamma_m,x),\quad \tilde a_i(t,x)=\lim_{n\to\infty} a_i(t+\beta_n+\gamma_n,x)
\vspace{-0.05in}$$
for $i=0,1,2$. Observe that $\hat a_i$ ($i=0,1,2$), $\check a_i$ $(i=0,1,2)$, and $\tilde a_i$ ($i=0,1,2)$ also satisfy the hypothesis (H1)
in the introduction, and $\check a_i=\tilde a_i$ for $i=0,1,2$.

Without loss of generality, we may assume that $\lim_{n\to\infty}(u^*(\cdot,t+\beta_n),v^*(\cdot,t+\beta_n))$ exists in $C^0(\bar\Omega)$. Let
$$
(\hat u^*(x,t),\hat v^*(x,t))=\lim_{n\to\infty}(u^*(\cdot,t+\beta_n),v^*(\cdot,t+\beta_n)).
$$ Then $(\hat u^*(x,t),\hat v^*(x,t))$ is an entire positive solution of \eqref{u-v-eq1} with $a_i(t,x)$ being replaced by $\hat a_i(t,x)$ $(i=0,1,2$).

We may also assume that $\lim_{n\to\infty}(\hat u^*(\cdot,t+\beta_n),\hat v^*(\cdot,t+\beta_n))$ exists in $C^0(\bar\Omega)$. Let
\vspace{-0.05in}$$
(\check u^*(x,t),\check v^*(x,t))=\lim_{n\to\infty}(\hat u^*(\cdot,t+\beta_n),\hat v^*(\cdot,t+\beta_n)).
\vspace{-0.05in}$$ Then $(\check u^*(x,t),\check v^*(x,t))$ is an entire positive solution of \eqref{u-v-eq1} with $a_i(t,x)$ being replaced by $\check a_i(t,x)$ $(i=0,1,2$).

 Furthermore, we  may assume that $\lim_{n\to\infty}(\hat u^*(\cdot,t+\beta_n+\gamma_n),\hat v^*(\cdot,t+\beta_n+\gamma_n))$ exists in $C^0(\bar\Omega)$. Let
\vspace{-0.05in}$$
(\tilde u^*(x,t),\tilde v^*(x,t))=\lim_{n\to\infty}(\hat u^*(\cdot,t+\beta_n+\gamma_n),\hat v^*(\cdot,t+\beta_n+\gamma_n)).
\vspace{-0.05in}$$ Then $(\tilde u^*(x,t),\tilde v^*(x,t))$ is an entire positive solution of \eqref{u-v-eq1} with $a_i(t,x)$ being replaced by $\tilde a_i(t,x)$ $(i=0,1,2$).
By the uniqueness of entire positive solutions of \eqref{u-v-eq1} with $a_i(t,x)$ being replaced by $\tilde a_i(t,x)$ $(i=0,1,2$), we have that
\vspace{-0.05in}$$
(\tilde u^*(x,t),\tilde v^*(x,t))=(\check u^*(x,t),\check v^*(x,t))\quad \forall\,\, x\in\bar\Omega,\,\, t\in\RR.
\vspace{-0.05in}$$
It then follows from $\check a_i=\tilde a_i$ for $i=0,1,2$ that
\vspace{-0.05in}$$
\lim_{m\to\infty}\lim_{n\to\infty}(u^*(x,t+\beta_n+\gamma_m),v^*(x,t+\beta_n+\gamma_m))=\lim_{n\to\infty}(u^*(x,t+\beta_n+\gamma_n),v^*(x,\beta_n+\gamma_n))
\vspace{-0.05in}$$
and hence $(u^*(x,t),v^*(x,t))$ is almost periodic in $t$. The theorem is thus proved.
\end{proof}

\begin{remark}
\label{asym-remark-00}
For the proof of Theorem \ref{thm-004} (2) in the general case of $\chi\in \mathbb{R},$ we need to see that
\begin{align*}
&\chi\int_{\Omega}w_{+}^2 (u- A^{-1}u)\nonumber\\
&=(\chi)_+\int_{\Omega}w_{+}^2 (u- A^{-1}u)-(\chi)_-\int_{\Omega}w_{+}^2 (u- A^{-1}u)\nonumber\\
&\leq (\chi)_+\left((r_1+\epsilon)-(r_2-\epsilon)\right)\int_{\Omega}w_{+}^2+(\chi)_-\left(-(r_2-\epsilon)+(r_1+\epsilon)\right)\int_{\Omega}w_{+}^2=\chi\left((r_1+\epsilon)-(r_2-\epsilon)\right)\int_{\Omega}w_{+}^2
\end{align*}
and the proof then follows as in the case $\chi>0.$
\end{remark}

\end{document}